\pdfoutput=1

\documentclass{article}

\usepackage{microtype}
\usepackage{graphicx}
\usepackage{subfigure}
\usepackage{booktabs} 

\usepackage{enumitem}
\usepackage{amssymb}
\usepackage{amsmath,amsthm}

\usepackage{mathrsfs}		
\usepackage{dsfont}

\usepackage{url,hyperref}

\usepackage[accepted]{icml2025}

\renewcommand{\cite}{\citep} 

\hypersetup{colorlinks=true, urlcolor=blue, linktoc = all}

\usepackage{hhline}
\usepackage{multirow}

\usepackage[utf8]{inputenc} 
\usepackage[T1]{fontenc}

\usepackage{multicol}
\usepackage{silence}
\WarningFilter{remreset}{The remreset package}
\usepackage{amsfonts,thmtools,thm-restate}
\usepackage{mathtools}
\usepackage{xparse}
\usepackage{enumitem} 
\usepackage{etoolbox}
\usepackage{mathtools}
\usepackage{complexity}
\usepackage{svg}
\usepackage{xcolor, soul}
\usepackage{tablefootnote}
\usepackage[bb=boondox]{mathalpha} 
\definecolor{lightgrey}{rgb}{0.9,0.9,0.9}
\sethlcolor{lightgrey}

\definecolor{mygray}{rgb}{0.6,0.6,0.6}

\usepackage[capitalise,nameinlink,noabbrev]{cleveref}  
\crefname{equation}{}{}

\theoremstyle{plain}
\newtheorem{theorem}{Theorem}[section]
\newtheorem{proposition}[theorem]{Proposition}
\newtheorem{lemma}[theorem]{Lemma}
\newtheorem{corollary}[theorem]{Corollary}
\theoremstyle{definition}

\newtheorem{remark}[theorem]{Remark}

\usepackage{tikz}

\usepackage{pgfplots}
\pgfplotsset{minor tick style={draw=none}}
\usepackage{float}

\usepackage{times}
\usepackage{xspace}

\definecolor{labelkey}{rgb}{0,0.08,0.45}
\definecolor{refkey}{rgb}{0,0.6,0.0}
\definecolor{Brown}{rgb}{0.45,0.0,0.05}
\definecolor{dgreen}{rgb}{0.00,0.49,0.00}
\definecolor{dblue}{rgb}{0,0.08,0.75}
\definecolor{ffwwqq}{rgb}{1.,0.4,0.}
\definecolor{qqzzqq}{rgb}{0.,0.6,0.}
\definecolor{qqqqff}{rgb}{0.,0.,1.}
\definecolor{dred}{HTML}{D90404}
\definecolor{orng}{HTML}{D35400}
\definecolor{cb-black}      {RGB}{  0,   0,   0}
\definecolor{cb-blue-green} {RGB}{  0,  073,  073}
\definecolor{cb-green-sea}  {RGB}{  0, 146, 146}
\definecolor{cb-rose}       {RGB}{255, 109, 182}
\definecolor{cb-salmon-pink}{RGB}{255, 182, 119}
\definecolor{cb-purple}     {RGB}{ 73,   0, 146}
\definecolor{cb-blue}       {RGB}{ 0, 109, 219}
\definecolor{cb-lilac}      {RGB}{182, 109, 255}
\definecolor{cb-blue-sky}   {RGB}{109, 182, 255}
\definecolor{cb-blue-light} {RGB}{182, 219, 255}
\definecolor{cb-burgundy}   {RGB}{146,   0,   0}
\definecolor{cb-brown}      {RGB}{146,  73,   0}
\definecolor{cb-clay}       {RGB}{219, 209,   0}
\definecolor{cb-green-lime} {RGB}{ 36, 255,  36}
\definecolor{cb-yellow}     {RGB}{255, 255, 109}
\definecolor{bred}{HTML}{FF0000}
\definecolor{bpurp}{HTML}{BF00BF}
\definecolor{bblu}{HTML}{0000FF}
\definecolor{bcyan}{HTML}{00BFBF}
\definecolor{byellow}{HTML}{BFBF00}
\definecolor{bgreen}{HTML}{008000}

\hypersetup{pdfborder={0 0 0}}

\newcommand{\norm}[1]{\| #1 \|}

\newcommand*\circledaux[1]{\tikz[baseline=(char.base)]{
    \node[shape=circle,draw,inner sep=0.8pt] (char) {#1};}}

\NewDocumentCommand{\circled}{m o }{%
    \IfNoValueTF{#2}{\circledaux{#1}}{\stackrel{\circledaux{#1}}{#2}}%
}

\newcommand{\defi}{\stackrel{\mathrm{\scriptscriptstyle def}}{=}}
\newcommand{\defiin}{\stackrel{\mathrm{\scriptscriptstyle def}}{\in}}

\renewcommand*\R{\mathbb{R}}

\let\epsilon\varepsilon
\let\doteq\defi

\newcommand{\ones}{\mathds{1}}

\usepackage{pifont}

\DeclareMathOperator*{\argmax}{arg\,max}                
\DeclareMathOperator*{\argmin}{arg\,min}

\newcommand{\innp}[1]{\langle #1 \rangle}
\newcommand{\bigo}[1]{O( #1 )}
\newcommand{\bigol}[1]{O\left( #1 \right)}

\input{definitions.tex}
% Config caption names:
%\renewcommand{\lstlistingname}{Algorithm}

% Commented for ICML
%% Wrapper for pseudocode
%\usepackage{algorithm}
%%\usepackage{algpseudocode}
%\usepackage{algcompatible}
%\algnewcommand{\lst}{\texttt{lst}}
%\algnewcommand{\slst}{\texttt{slst}}
%\algnewcommand{\SEND}{\textbf{send}}

% For cleveref referencing algorithms' lines
\newcounter{myalg}
\AtBeginEnvironment{algorithmic}{\refstepcounter{myalg}}
\makeatletter
\@addtoreset{ALC@unique}{myalg}
\makeatother

\crefname{ALC@unique}{Line}{Lines}

\newsavebox{\algleft}
\newsavebox{\algright}

% ========================================================
\makeatletter

\makeatother

\begin{document}

\twocolumn[

\icmltitle{Beyond Short Steps in Frank-Wolfe Algorithms}

\begin{icmlauthorlist}
\icmlauthor{David Martínez-Rubio}{zib,c3u} 
\icmlauthor{Sebastian Pokutta}{tub,zib} 
\end{icmlauthorlist}

\icmlaffiliation{zib}{Zuse Institute Berlin, Germany}
\icmlaffiliation{c3u}{Signal Theory and Communications Department,
  Carlos III University, Madrid, Spain}
\icmlaffiliation{tub}{Institute of Mathematics,
  Technische Universität Berlin, Germany}

\icmlcorrespondingauthor{David Martínez-Rubio}{dmrubio@ing.ucm.es}
\icmlcorrespondingauthor{Gábor Braun}{braun@zib.de}
\icmlcorrespondingauthor{Sebastian Pokutta}{pokutta@zib.de}

\icmlkeywords{Frank—Wolfe algorithm, approximate duality gap}

\vskip 0.3in
]

\printAffiliationsAndNotice{}  

\begin{abstract}
    We introduce novel techniques to enhance Frank-Wolfe algorithms by leveraging function smoothness beyond traditional short steps. Our study focuses on Frank-Wolfe algorithms with step sizes that incorporate primal-dual guarantees, offering practical stopping criteria. We present a new Frank-Wolfe algorithm utilizing an optimistic framework and provide a primal-dual convergence proof. Additionally, we propose a generalized short-step strategy aimed at optimizing a computable primal-dual gap. Interestingly, this new generalized short-step strategy is also applicable to gradient descent algorithms beyond Frank-Wolfe methods. As a byproduct, our work revisits and refines primal-dual techniques for analyzing Frank-Wolfe algorithms, achieving tighter primal-dual convergence rates. Empirical results demonstrate that our optimistic algorithm outperforms existing methods, highlighting its practical advantages.
\end{abstract}

\section{Introduction}
We are interested in solving the following optimization problem:
\[
    \min_{x\in\X} f(x),
\]
where \(\X\) is a compact convex set and $f$ is a convex and $L$-smooth function. The Frank-Wolfe (\newtarget{def:acronym_frank_wolfe}{\FW{}}) algorithm \citep{frank1956algorithm}, also known as the conditional gradient algorithm \citep{levitin1966constrained}, is a key algorithm for this problem class, particularly for problems where projection onto a constraint set is computationally expensive, as it does not require projections. It leverages a linear minimization oracle (\newtarget{def:acronym_linear_minimization_oracle}{\LMO{}}) for \(\X\), which upon presentation with a linear function $c$ returns $v \leftarrow \arg\min_{v\in\X} \innp{c, v}$, and a gradient oracle for \(f\), which given a point $x \in \dom(f)$ returns $\nabla f(x)$. Given its low cost per iteration it is often highly effective in various machine learning applications. These include optimal transport \citep{luise2019sinkhorn}, neural network pruning \citep{ye2020good}, adversarial attacks \citep{chen2020fwframework}, non-negative matrix factorization \citep{nguyen2022memory}, particle filtering \citep{lacoste2015sequential}, and distributed learning \citep{bellet2015distributed}, among others.

A more general version of the Frank-Wolfe algorithm can handle the optimization of \(f(x) + \psi(x)\) for a convex function \(\psi\). In this case however, we require access to a gradient oracle for \(f\) and an oracle that can solve \(\argmin_v\{\innp{w, v} + \psi(v)\}\) for any \(w \in \mathbb{R}^d\), as discussed in \citep{nesterov2018complexity}. We recover the classical \FW{} algorithm by choosing \(\psi(x)\) as the indicator function of the set \(\X\).

In this work, we will focus on primal-dual analyses for \FW{} algorithms. In contrast to classical analyses where a step-size strategy and corresponding convergence rate need to be heuristically estimated and then proven by induction, in the primal-dual setup they emerge naturally as a natural consequence of the analysis. This approach does not only provide tighter dual gaps and hence stopping criteria but also enhances the convergence properties of the algorithm. In particular, rather than considering primal progress without clear indication how good the solution is, primal-dual progress refers to the reduction in the primal-dual gap, which is a measure of how close the current solution is to optimality; this will be our measure here. A well-known strategy for \FW{}  algorithms is the so-called short step, which essentially chooses the step size to maximize primal guaranteed progress from the descent lemma, see \citep{frank1956algorithm}); and also \citep{CGFWSurvey2022} for an in-depth discussion. 

We introduce the concept of primal-dual short steps, that maximize primal-dual guaranteed progress based on a model obtained from the primal-dual analysis. We also show that this new step-size strategy is also applicable to gradient descent algorithms. Moreover, recent advancements have introduced adaptive step-size strategies, such as, e.g., \citep{pedregosa2020linearly} (see also \citep{P2023} for a numerically improved version), which further refine the short-step. These strategies aim to decrease a model of the function adaptively, enhancing the algorithm's performance and stability and in particular allow for leveraging local curvature information. However, they come at the extra cost of a line search like procedure and are subject to the same lower bounds as the classical short steps \citep{guelat1986some}. 

\subsection*{Contribution} Our contribution can be summarized as follows:

\paragraph{Optimistic Frank-Wolfe Algorithm} We introduce a novel Frank-Wolfe algorithm that leverages the concept of optimism, as discussed in \citep{rakhlin2013online,steinhardt2014}. This algorithm is designed to adapt effectively to varying conditions and provides a robust analysis of the convergence rate associated with the primal-dual gap. 

\paragraph{Primal-Dual Short Steps} We propose a new class of step-size rules for existing Frank-Wolfe algorithms, termed primal-dual short steps. These steps are based on the sequential minimization of a primal-dual gap defined in the algorithm's analysis at each iteration. This approach is flexible and extends to gradient descent algorithms, allowing for line search over the primal-dual gap in both algorithm classes.

\paragraph{Numerical Experiments} We conduct preliminary numerical experiments to demonstrate the practical advantages of the proposed optimistic Frank-Wolfe algorithm. In our tests, the results consistently show the effectiveness of this new variant in minimizing the primal-dual gap more efficiently than traditional methods. In particular, in most experiments the empirical order of convergence is better. 

Additionally, we have obtained primal-dual analyses for algorithms and results where so far only classical primal convergence analysis where available, often improving constants. Due to space constraints, these findings are detailed in \cref{sec:pd_extensions_fw}.

\subsection*{Related Work}
The Frank-Wolfe algorithm, originally introduced by \citet{frank1956algorithm} where also short steps were introduced, is a foundational method in projection-free optimization, particularly for problems involving convex constraints. \citet{levitin1966constrained} further extended the method, which they referred to as \textit{conditional gradient algorithm}. A significant advancement in the understanding of the Frank-Wolfe algorithm was made by \citet{jaggi2013revisiting}, who introduced the concept of the \FW{} gap and provided convergence guarantees on this measure, marking the first instance of \FW{} guarantees for a primal-dual gap.

The development of primal-dual methods for conditional gradient algorithms has been explored in works such as \citet{nesterov2018complexity} and \citet{diakonikolas2019approximate}, which have contributed to the broader understanding of these algorithms. Additionally, \citet{abernethy2017fwequilibrium} described a Frank-Wolfe variant that incorporates cumulative gradients, albeit with equal weights, resulting in an additional logarithmic factor in complexity.

Several methods apply the \LMO{} at a convex combination of previously obtained directions. In particular, variants of the Frank-Wolfe algorithm have been developed by \citet{mokhtari2020stochastic}, \citet{lu2021generalized}, and \citet{negiar2020stochastic} using stochastic gradients. Notably, the analysis of \citet{lu2021generalized} reduces to the optimal rate in the deterministic case, up to constant factors. \citet{wang2024noregret} also presented a cumulative-gradient \FW{} algorithm, with additional logarithmic factors. Further contributions include the work of \citet{li2021heavy}, who provided a primal-dual analysis of the deterministic algorithm, heavy-ball Frank-Wolfe (\newtarget{def:acronym_heavy_ball_frank_wolfe}{\HB{}}), using a convex combination of gradients. 

In the context of online learning, the concept of optimism, which involves using hints such as past losses, to predict future losses, originated in \citet{azoury2011relative}, \citet{chiang2012online}, and \citet{rakhlin2013online}. This idea had already been developed in saddle point optimization, as seen in the work of \citet{popov1980modification}. 

A duality was found and further explored between mirror descent and Frank-Wolfe algorithms by \citet{bach2015duality} and \citet{pena2019generalized}. \citet{wang2024noregret} and \citet{gutman2023perturbed} have also shown that Frank-Wolfe and mirror descent, among other methods, can be unified under a common framework. In this work, we make use and generalize an anytime online-to-batch conversion \citep{nesterov2015quasi,cutkosky2019anytime} for connecting Frank-Wolfe algorithms with online learning ones.

Works that have studied modifications of the gradients to obtain faster convergence of \FW{} algorithms and linking them back to gradient descent methods and gradient mappings include \citet{CP2020boost,MGP2020walking} and in \citet{CDP2019} local acceleration in the Nesterov sense has been demonstrated for \FW{} algorithms for strongly convex functions while whether local acceleration is possible for non-strongly convex but smooth functions remains an open question. 

\section{Preliminaries and Groundwork}\label{sec:preliminaries}

In this section, we introduce the necessary notation and an overview of some \FW{} algorithms. In the following let $\X$  be a compact convex set with diameter $D$. Further, let $f: \X \to \R$ be a differentiable function defined on an open set containing $\X$. We assume that $f$ is convex and $L$-smooth on $\X$ with respect to a norm $\norm{\cdot}$. This means that for all $x, y \in \X$, the following inequality holds:
\[
   0 \leq  f(y) - f(x) - \innp{\nabla f(x), y-x} \leq \frac{L}{2}\norm{x-y}^2.
\]
These two conditions imply $f$ has an $L$-Lipschitz gradient $\norm{\nabla f(x) - \nabla f(y)}_\ast \leq L\norm{x-y}$, where $\norm{\cdot}_\ast$ denotes the dual norm.

If not stated otherwise let $x^\ast \in \argmin_{x\in\X} f(x)$ be a minimizer of $f$ over $\X$. Further, let $\newtarget{def:indicator_function}{\indicator}(x)$ be the indicator function of the set $\X$ that is $0$ if $x \in \X$ and $+\infty$ otherwise, and let $\ones_{B}$ be the event indicator function that is $1$ if the event $B$ holds and $0$ otherwise. We denote $[T] \defi \{1, 2, \dots, T\}$. We denote by $\partial \psi(x)$ the subdifferential of $\psi$ at $x$ so that if $g\in\partial\psi(x)$, we have $\psi(y) \geq \psi(x) + \innp{g, y-x}$. 

We provide now a sketch of the proof structure for some primal-dual analysis of \FW{} as well as \HB{} and other methods, and in \cref{sec:review_of_primal_dual_fw} we provide full details of this overview. With respect to \citep{li2021heavy}, we provide a slightly more general analysis of \HB{} by allowing for a composite term and we provide an analysis of \FW{} when decreasing regularization is used. We start by defining a lower bound $L_t$ on the optimal value that we have access to at time $t$, and for iterates $x_t$, $t\geq 1$, we define a primal-dual gap as 
\begin{equation}\label{eq:def_primal_dual_gap}
    G_t \defi f - L_t
\end{equation}
where $f(x_{t+1})$ is one step ahead of the lower bound, which is a subtle shift that differs slightly from known analysis and allows us to get very clean primal-dual analyses of and convergence results for the new and old algorithms we present in this work. In the sequel, if not stated otherwise, let $a_t > 0$ to be determined later and define $A_t = A_{t-1} + a_t = \sum_{\ell = 0}^t a_\ell$.
For instance, for \FW{} and FW-HB we use the following lower bounds, or rather, the choice of lower bound defines the iterate $v_t$ of the algorithm:
\begin{align}\label{eq:lower_bounds}
    \begin{aligned}
        & \ A_t f(x^\ast) \\ 
        &\circled{1}[\geq]  \sum_{\ell=0}^t a_\ell f(x_\ell) + \sum_{\ell=0}^t a_\ell \innp{\nabla f(x_\ell), x^\ast - x_\ell}  \\
        &{\circled{2}[\geq]} \sum_{\ell=0}^t a_\ell f(x_\ell) + \min_{v\in\X}\Big\{ \sum_{\ell=0}^ta_\ell \innp{\nabla f(x_\ell), v - x_\ell} \Big\}{\defi}A_t L_t^{\operatorname{HB}}  \\
        &{\circled{3}[\geq]} \sum_{\ell=0}^t a_\ell f(x_\ell) + \sum_{\ell=0}^t a_\ell \min_{v\in\X} \innp{\nabla f(x_\ell), v - x_\ell} \defi A_t L_t^{\operatorname{FW}}\\
        & = \sum_{\ell=0}^t a_\ell f(x_\ell) - \sum_{\ell=0}^t a_\ell g(x_\ell).
    \end{aligned}
\end{align}
where the term above $g(x_\ell) \defi \max_{v\in\X} \innp{\nabla f(x_\ell), x_\ell-v}$ $\left( \geq f(x_\ell) - f(x^\ast) \right)$ is the so-called FW gap, which is also a primal dual gap, the best of which for $\ell \in [t]$ was shown to converge by \citet{jaggi2013revisiting} at a $\bigo{\frac{LD^2}{t}}$ rate for \FW{}. Above $\circled{1}$ holds by convexity, $\circled{2}$ takes a $\min$. This expression is $A_t$ times the lower bound $L_t^{\operatorname{HB}}$ in \HB{} is and its $\argmin$ corresponds to the iterate $v_t$ of \HB{}, cf. \cref{line:heavy_ball_fw}, whereas after $\circled{3}$ we have $A_t$ times the lower bound $L_t^{\operatorname{FW}}$ used in the \FW{} algorithm and the $\argmin$ of the $\ell$-th summand is the iterate $v_t$ of \FW{}, cf. \cref{line:vanilla_fw}.

\begin{algorithm}[H]
    \caption{\label{alg:fw} Frank-Wolfe and Heavy-Ball \FW{} algorithms }
    \begin{algorithmic}[1]
        {\footnotesize
        \REQUIRE Function $f$, feasible set $\X$, initial point $x_0$. Weights $a_t, \gamma_t$.
            \FOR{$t=0$ \textbf{to} $T$}
           \STATE {\color{mygray}$\diamond$ Choose either \ref{line:heavy_ball_fw} (\newlinkcolor{def:acronym_heavy_ball_frank_wolfe}{HB-FW}{mygray}) or \ref{line:vanilla_fw} (\newlinkcolor{def:acronym_frank_wolfe}{FW}{mygray}) for the entire run:}
            \STATE $v_t \leftarrow \argmin\limits_{v \in \mathcal{X}} \sum_{i=1}^{t-1} a_i\langle \nabla f(x_i), v \rangle$\label{line:heavy_ball_fw}
            \STATE $v_t \leftarrow \argmin\limits_{v \in \mathcal{X}} \langle \nabla f(x_t), v \rangle $\label{line:vanilla_fw}
            \STATE $x_{t+1} \leftarrow (1 - \gamma_t) x_t + \gamma_t v_t$
            \ENDFOR}
    \end{algorithmic}
\end{algorithm}

If we now show a bound 
\begin{equation}\label{eq:adgt_bound}
    A_t G_t - A_{t-1} G_{t-1} \leq E_t   \text{ for all } t \geq 0,
\end{equation}
then we have $f(x_{t+1}) -f(x^\ast) \leq G_t \leq \frac{1}{A_t}\sum_{i=0}^t E_i$. Our aim is thus to have small $E_t$ and large $A_t$. Note $A_{-1} = 0$ by definition. If $f$ is $L$-smooth with respect to $\norm{\cdot}$ and $D \defi \max_{x, y \in \X}\norm{x-y}$, then choosing $a_t = 2t +2$, $\gamma_t = a_t / A_t$ one can show that both \FW{} and \HB{} in \cref{alg:fw} satisfy \cref{eq:adgt_bound} with $E_t = \frac{LD^2 a_t^2}{2A_t}$ and consequently $G_t \leq \frac{2LD^2}{t+2}$, cf. \cref{sec:review_of_primal_dual_fw}. The term $f(x_{t+1}) - f(x^\ast)$ cannot be computed in general since we usually do not have access to the value $f(x^\ast)$. However, the primal-dual bound $G_t$ is computable and thus it can be used as a stopping criterion. 

\begin{remark}[Alternative step-size strategies]\label{eq:non_open_loop_step_sizes}
    The analysis uses the guaranteed \textit{primal progress}, that is, the descent that is guaranteed on $f(x_{t+1}) - f(x_t)$, in particular with the choice $\gamma_t = \frac{a_t}{A_t} = \frac{2}{t+2}$ by using the upper bound that the smoothness inequality provides. Any step size strategy whose primal progress dominates this guaranteed progress yields a better convergence rate. For instance, the so-called short step-size strategy, maximizes this guaranteed progress along the segment joining $x_t$, and $v_t$. Indeed the best lower bound on $f(x_{t}) - f(x_{t+1})$ given by smoothness is 
    \[
        \max_{\gamma \in [0, 1]}\left\{\gamma\innp{f(x_t), x_t - v_t} + \gamma^2\frac{L\norm{x_t-v_t}^2}{2}\right\}.
    \] 
    And the $\argmax$ is $\gamma = \min\{1, \frac{\innp{\nabla f(x_t), x_t - v_t}}{L\norm{x_t -v_t}^2}\}$, which is the short step-size.
    Another alternative is performing a line search over the segment joining $x_t$ and $v_t$. Even if the line search is performed with an error at iteration $t$, this error only contributes additively to the corresponding $E_t$, and so its global impact can be easily quantified.
\end{remark}

Although short steps induce monotonic primal progress by definition, they do not necessarily induce monotonic primal-dual progress, which is the important measure to look at when we require a stopping criterion. To remedy this, in \cref{sec:primal-dual-short-step} we introduce a new primal-dual short step strategy that induces monotonic primal-dual progress.

On the other hand all of previous FW approaches exploit smoothness of $f$ in the same way: a quadratic upper bound on the function is computed and the yielded guaranteed progress is used to compensate from some other errors in the analysis. In \cref{sec:optimistic_fw}, we introduce a framework that goes beyond this by making use of optimism. The algorithm has the potential of better adapt to the environment, as we show in the experiments section, where Optimisitic Frank-Wolfe performs better than other approaches.

\section{An Optimistic Frank-Wolfe algorithm}\label{sec:optimistic_fw}

In this section, we propose a new Frank-Wolfe method that at each iteration, uses a prediction of the next gradient in order to minimize a suitable regularized lower model of the objective. The more accurate this prediction is, the better the resulting convergence rate. For functions with Lipschitz continuous gradients, the previous gradient serves as a sufficiently accurate predictor for the next one. We sketch the main ideas of our new algorithm and provide all details in \cref{sec:optimistic_proofs}.

The algorithm and analysis is based on optimistic versions of the Online Mirror Descent (OMD) algorithm and of the Follow the Regularized Leader (FTRL) algorithm. We provide a slight generalization over these online learning algorithms in order to allow for non-differentiable regularizers, in order to cover typical cases where Frank-Wolfe is applied. To that effect, we make use of the following Bregman divergence definition, that specifies a subgradient of $\phi$ of the regularizer for its definition, that is,
\[
    D_{\psi}(x, y, \phi) \defi \psi(x) - \psi(y) - \innp{\phi, x-y}.
\] 
where $\phi \in \partial \psi(y)$. Note that in the pseudocode of \cref{alg:optimistic_frank_wolfe} we write $D_\psi(v, v_{t-1}, \phi_t \in \in \partial \psi(v_{t-1}))$ to mean that the algorithm can use $D_\psi(v, v_{t-1}, \phi_t)$ for any  $\phi_t \in \partial \psi(v_{t-1})$.

Online learning algorithms typically use regularizers $\psi$ that are strongly convex or enjoy any other curvature property such as uniform convexity, in order to obtain a low enough regret. Even though we do not assume strong convexity or any other curvature property of the regularizer $\psi$ (in fact, $\psi$ can be $0$ in the feasible set), we show that we can apply an optimistic approach that leads to the optimal convergence rate of the algorithm in this setting, and we show in \cref{sec:experiments} that empirically outperforms other approaches. 

The starting point of the method, as in \cref{eq:lower_bounds}, is defining a lower bound on the optimal value, which we do by taking inspiration on the anytime online-to-batch conversion of \citet{cutkosky2019anytime}, that connects the regret of online learning algorithms with convergence guarantees of optimization methods. This naturally leads to the definition $x_t = \frac{1}{A_t} \sum_{i=1}^t a_i v_i = \frac{a_t}{A_t} v_t + \frac{A_{t-1}}{A_t} x_{t-1}$ for all $t \geq 1$ in \cref{alg:optimistic_frank_wolfe}.  We are also able to show that at each iteration, the theory allows for making use of a point $y_{t-1}$ with lower function value than the previous $x_{t-1}$, in place of using $x_{t-1}$, and we show that this does not hurt the convergence guarantee, so it allows for heuristics like performing line search over over the segment $x_{t-1}$ and $v_{t}$, or other future heuristics that may be built on top of this algorithm. The definition of $v_t$ comes from the optimistic online learning algorithmic schemes.

In a similar fashion to the one in \cref{eq:def_primal_dual_gap}, using said lower bound we define a primal-dual gap that we denote $G_t^{\operatorname{OP}}$ for both variants of the algorithm. We provide guarantees on this primal-dual gap by the regret of the optimistic procedure, gradient Lipschitzness of the function, along with the loss weighting and compactness of the domain.

We use an optimistic FTRL algorithm (\cref{line:fw_oftrl}) or an optimistic MD algorithm (\cref{line:fw_oftrl}) with \emph{constant step size}, designed to work for subdifferentiable losses. Interesting, while it is well-known that for constant step size and unconstrained problems, FTRL and OMD have the same updates, we note the more general property that in the constrained setting, FTRL is an instance of OMD with subdifferentiable regularizers for a precise choice of subgradients, cf. \cref{remark:equivalence_of_ftrl_and_omd}.

\begin{algorithm}
    \caption{Optimistic Frank-Wolfe algorithms}
    \label{alg:optimistic_frank_wolfe}
   \begin{algorithmic}[1]
       \REQUIRE Convex subdifferentiable regularizer $\psi$ such that $\argmin_x \{\innp{w, x}+\psi(x)\}$ exists for all $w \in \Rd$. A convex function $f$, differentiable and $L$-smooth with respect to a norm $\norm{\cdot}$ in $\operatorname{dom}(\psi)$. Initial point $x_0 = v_0 \in \operatorname{dom}(\psi)$. 
       \STATE $A_{0} \gets 0$; $a_0 \gets 0$
       \vspace{0.2cm}
       \hrule
       \vspace{0.2cm}
       \STATE $g_0 = 0$; \ \ $g_1 = \nabla f(x_0)$
       \FOR{$ t \gets 1 $ \textbf{to} $T$} 
           \STATE $a_t \gets 2t$
           \STATE $A_t \gets A_{t-1} + a_t = \sum_{i=1}^t a_i = t(t+1)$
           \STATE {\color{mygray}$\diamond$ Choose either \ref{line:fw_oftrl} or \ref{line:fw_omd} for the entire run:}
           \STATE $v_t \gets $ point in $\argmin_{v\in\R^d} \{\sum_{i=1}^{t-1} a_i\innp{\nabla f(x_i), v} + a_{t} \innp{g_t, v} + \psi(v)\}$\label{line:fw_oftrl}
           \STATE $v_t \gets $ point in $\argmin_{v\in\R^d} \{a_{t-1}\innp{\nabla f(x_{t-1})-g_{t-1}, v} + a_{t}\innp{g_t, v} + D_\psi(v, v_{t-1}; \phi_t \in \partial\psi(v_{t-1}))\}$\label{line:fw_omd}
           \STATE $y_{t-1} \gets $ point such that $f(y_{t-1}) \leq f(x_{t-1})$
           \STATE $x_{t} \gets \frac{A_{t-1}}{A_t} y_{t-1} + \frac{a_{t}}{A_t} v_t$ \COMMENT{ $= \frac{t-1}{t+1}y_{t-1} + \frac{2}{t+1}v_t$}\label{line:x_in_FW}
           \STATE $g_{t+1} \gets \nabla f(x_t)$
       \ENDFOR
        \STATE \textbf{return} $x_T$.
\end{algorithmic}
\end{algorithm}

One subtlety of $G_t^{\operatorname{OP}}$ is that it is not directly computable as is, since the direction we apply the \LMO{} depends on the hint that we choose. We can compute a simple close bound of it at every iteration, or if we want to compute it after certain number of iterations, we can do it by performing an extra \LMO{}, cf. \cref{remark:computable_optimistic_primal_dual_gap}.

The guarantee we obtain on the algorithm is the following.

\begin{restatable}{theorem}{OptimisticFWGuarantees}\label{thm:optimistic_FW_guarantees}\linktoproof{thm:optimistic_FW_guarantees}
    Let $\X$ be compact and convex, and let $\psi : \X \to \R$ be a closed convex function, subdifferentiable in $\X$.
    
    Let $f$ be convex and $L$-smooth in the set $\X$ of diameter $D$ with respect to a norm $\norm{\cdot}$. The iterates $x_t$ of \cref{alg:optimistic_frank_wolfe} satisfy:
    \[
        f(x_t) - f(x^\ast) \leq G_t^{\operatorname{OP}} \leq \frac{\psi(x^\ast)-\psi(x_1)}{t(t+1)} + \frac{4LD^2}{t+1},
    \] 
    for the variant of \cref{line:fw_oftrl}. For the variant of \cref{line:fw_omd} we obtain the same except that $\psi(x^\ast)-\psi(x_1)$ is substituted by $D_{\psi}(x^\ast, x_0; \phi_0)$, where $\phi_0 \in \partial\psi(x_0)$.
\end{restatable}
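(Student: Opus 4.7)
The plan is to reduce the primal convergence of \cref{alg:optimistic_frank_wolfe} to the regret of the underlying optimistic online learner, and then to control that regret using $L$-smoothness of $f$ together with the step schedule $a_t = 2t$, $A_t = t(t+1)$ in the spirit of the anytime online-to-batch conversion.

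First I would use convexity of $f$ at each $x_i$ to write $f(x_i) - f(x^\ast) \le \langle \nabla f(x_i), x_i - x^\ast\rangle$, weight by $a_i$, sum, and split the linearization through $v_i$:
\[
    \sum_{i=1}^t a_i (f(x_i) - f(x^\ast)) \le \sum_{i=1}^t a_i \langle \nabla f(x_i), x_i - v_i\rangle + \sum_{i=1}^t a_i \langle \nabla f(x_i), v_i - x^\ast\rangle.
\]
The first sum is handled via the identity $a_i(x_i - v_i) = A_{i-1}(y_{i-1} - x_i)$, obtained directly from $A_i x_i = A_{i-1} y_{i-1} + a_i v_i$, together with convexity and the hypothesis $f(y_{i-1}) \le f(x_{i-1})$; each term is at most $A_{i-1}(f(x_{i-1}) - f(x_i))$, so Abel summation telescopes it to $\sum_{i=1}^{t-1} a_i f(x_i) - A_{t-1} f(x_t)$. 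After cancellation this yields the clean inequality $A_t(f(x_t) - f(x^\ast)) \le \sum_{i=1}^t a_i \langle \nabla f(x_i), v_i - x^\ast\rangle$: the weighted primal suboptimality is upper bounded by the regret of the online learner fed with linear losses $a_i \nabla f(x_i)$ and hints $a_i g_i = a_i \nabla f(x_{i-1})$ against the comparator $x^\ast$. The same telescoping produces a primal-dual gap $G_t^{\operatorname{OP}}$ by replacing $f(x^\ast)$ with the minimum over $u$ of a computable lower model $\sum_i a_i\langle \nabla f(x_i), u - x_i\rangle + \psi(u) - \psi(x_1)$ (respectively with $D_\psi(u, x_0; \phi_0)$ for the OMD variant), which automatically satisfies $f(x_t) - f(x^\ast) \le G_t^{\operatorname{OP}}$.

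Second, I would apply the standard optimistic FTRL/OMD regret analysis — a ``be the leader'' argument that does not require strong convexity of $\psi$ — to obtain
\[
    \sum_{i=1}^t a_i \langle \nabla f(x_i), v_i - x^\ast\rangle \le R_0 + \sum_{i=1}^t a_i \langle \nabla f(x_i) - g_i, v_i - v_{i+1}\rangle,
\]
with $R_0 = \psi(x^\ast) - \psi(x_1)$ in the OFTRL case and $R_0 = D_\psi(x^\ast, x_0; \phi_0)$ in the OMD case (using $x_1 = v_1$, which holds because $A_0 = 0$). Each optimism term is then controlled via Cauchy-Schwarz, $L$-smoothness, and the diameter: $\|\nabla f(x_i) - g_i\|_\ast = \|\nabla f(x_i) - \nabla f(x_{i-1})\|_\ast \le L\|x_i - x_{i-1}\| \le L (a_i/A_i) D$ (using the convex-combination form of $x_i$ with the contraction-preserving choice $y_{i-1} = x_{i-1}$, or $y_{i-1}$ lying on the segment $[x_{i-1}, v_i]$ from a line search), and $\|v_i - v_{i+1}\| \le D$. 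Hence each term is at most $(a_i^2/A_i) L D^2 = (4i^2/(i(i+1))) L D^2 \le 4 L D^2$; summing over $i$ and dividing by $A_t = t(t+1)$ produces the advertised rate $R_0/(t(t+1)) + 4LD^2/(t+1)$.

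The main obstacle is the regret bound itself, because $\psi$ is only assumed convex and subdifferentiable — in particular, it may be the indicator $\indicator{\X}$ with no curvature at all, whereas classical optimistic bounds use strong convexity of the regularizer to control $\|v_i - v_{i+1}\|$. The workaround is to replace that curvature bound by the uniform diameter $\|v_i - v_{i+1}\| \le D$ and concentrate all the ``smallness'' in the gradient-increment factor, which is tamed by $L$-smoothness together with the algorithmic identity $\|x_i - x_{i-1}\| \le (a_i/A_i) D$. A secondary subtlety, needed for the OMD variant to inherit the same bound but with a Bregman divergence, is that OFTRL and OMD coincide only for a compatible choice of subgradients of the non-differentiable $\psi$ in the Bregman term, which is exactly the equivalence invoked in the FTRL/OMD remark referenced in the paper.
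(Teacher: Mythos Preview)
Your proposal is correct and follows essentially the same route as the paper: the Abel-summation reduction of $A_t(f(x_t)-f(x^\ast))$ to the online regret is exactly the paper's $\Phi_k=\Phi_{k+1}$ telescoping, and the paper then applies its optimistic FTRL/OMD regret lemma (a be-the-leader argument that indeed requires no curvature of $\psi$) together with $L$-Lipschitzness of $\nabla f$ and the diameter bound $\|v_i-\tilde v_{i+1}\|\le D$, precisely as you describe. One minor slip in a parenthetical: a line-search choice $y_{i-1}\in[x_{i-1},v_i]$ does \emph{not} in general preserve $\|x_i-x_{i-1}\|\le (a_i/A_i)D$, since then $x_i-x_{i-1}=\big(\tfrac{A_{i-1}}{A_i}\lambda+\tfrac{a_i}{A_i}\big)(v_i-x_{i-1})$ and the coefficient can exceed $a_i/A_i$; the paper's displayed bound at that step likewise takes $y_{i-1}=x_{i-1}$.
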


We note that the decay of the term involving $\psi$ is just a consequence of our choice of step sizes, which we used for simplicity. It is possible to keep the $\bigo{\frac{LD^2}{t+1}}$ rate and have an arbitrarily fast polinomial-on-$t$ decay on the term involving $\psi$ by making a different choice of step sizes, see \cref{sec:convergence_rate_optimistic_FW}. 

\section{Primal-dual short steps}\label{sec:primal-dual-short-step}

A key step of the analysis in most Frank-Wolfe algorithms for smooth problems, such as in \cref{sec:preliminaries}, consists of using the smoothness inequality in order to guarantee some descent that compensates other per-iteration errors that appear. Several step-size rules have been devised, that may change depending on the specific setting. However, three families of step sizes stand out in almost every setting, which correspond to the ones we discussed in \cref{sec:preliminaries}: (A) open-loop step-sizes, that only depend on the iteration count, (B) short steps, that minimize the upper bound given by the last computed gradient $\nabla f(x_t)$ and the smoothness inequality with respect to some norm, along the segment in between the current point $x_t$ and the computed Frank-Wolfe vertex $v_t \in \argmax_{v\in \X} \{\innp{\nabla f(x_t), v}\}$, and (C) line search in the aforementioned segment to maximize primal progress.

In the sequel, we devise a new class of step sizes, which are a generalization of (B). The idea of (B) is to greedily maximize the \textit{guaranteed} primal progress along the segment in between $x_t$ and $v_t$, where we know we are feasible. The key idea of our new step-size rule consists of taking the primal-dual gap \cref{eq:def_primal_dual_gap} that is defined for the analyses with the structure of \cref{eq:adgt_bound}, and choosing the step size in order to maximize the \textit{guaranteed} progress in terms of this primal-dual gap. 

We also show that our primal-dual gap bound at iteration $t$ is convex with respect to the step size, which implies that one can efficiently do a line search to maximize the primal-dual progress. In order to show the flexibility of this paradigm, we also generalize these ideas to the gradient descent algorithm.

\subsection{Primal-dual short steps for FW algorithms}

Let us consider first the case of \FW{} and FW-HB. In this section, we denote $G_t$, and $x_t$ and $v_t$ the respective primal-dual gaps, and iterates. From the analyses in \cref{sec:review_of_primal_dual_fw}, cf. \eqref{eq:primal_dual_good} or \eqref{eq:primal_dual_good_hb}, we have the following
\begin{align*}
	\begin{aligned}
		A_t& G_t - A_{t-1} G_{t-1} \leq \frac{L a_t^2}{2A_t}\norm{v_{t}-x_t}^2,
	\end{aligned}
\end{align*}
and defining $\gamma_t \defi a_t / A_t$, dividing the above by $A_t$ on both sides, and rearranging gives
\begin{align}\label{eq:isolating_primal_dual_gap}
	\begin{aligned}
		G_t \leq (1- \gamma_t) G_{t-1} + \gamma_t^2 \frac{L}{2}\norm{v_{t}-x_t}^2.
	\end{aligned}
\end{align}
The right-hand side is minimized for the choice 
\begin{align}
    \label{eq:primal-dual-short}
    \gamma_t = \min\left\{1,\frac{G_{t-1}}{L \norm{v_{t}-x_t}^2}\right\},
\end{align}
which is what we refer to as the \emph{primal-dual short step} for these two algorithms, and focuses on maximizing guaranteed progress of the primal-dual gap, as discussed above. We show that this step-size is sound in the sense that we still keep the optimal convergence guarantees of other approaches. 

We also show that we can perform a line search over the primal-dual gap bound obtained before using the smoothness inequality in the analyses. 

\begin{proposition}\label{prop:primal_dual_steps}\linktoproof{prop:primal_dual_steps}
    Let $f$ be convex and differentiable. The \FW{} and \HB{} algorithms satisfy
\begin{align}\label{eq:line_search_expression_primal_dual}
     \begin{aligned}
         G_t &\leq (1-\gamma) G_{t-1}  - \gamma \innp{\nabla f(x_t), v_t - x_t} - f(x_t)\\
         & + f((1-\gamma) x_t + \gamma v_t), \quad \quad \text{ for all } \gamma \in [0, 1], t>1
     \end{aligned}
\end{align}
    and the RHS is convex on $\gamma$. 
    If further $f$ is $L$-smooth w.r.t. a norm $\norm{\cdot}$, and $D \defi \max_{x, y\in\X}\norm{x-y}$, then using the step \cref{eq:primal-dual-short} or line search on the RHS of \cref{eq:line_search_expression_primal_dual} above, for either algorithm, we obtain:
    \[
        G_t \leq \frac{4LD^2}{t+2}.
    \] 
\end{proposition}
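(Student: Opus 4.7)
The plan is to prove the three parts in order: the pre-smoothness inequality \eqref{eq:line_search_expression_primal_dual}, its convexity in $\gamma$, and the convergence rate.

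For \eqref{eq:line_search_expression_primal_dual}, I would revisit the primal-dual telescoping of \cref{sec:review_of_primal_dual_fw} but stop one step before invoking smoothness and treat the ratio $\gamma := a_t/A_t$ as a free parameter that the algorithm sets at iteration $t$. Using $G_{t-1} = f(x_t) - L_{t-1}$ and $G_t = f(x_{t+1}) - L_t$, together with $x_{t+1} = (1-\gamma)x_t + \gamma v_t$ and, for \FW{}, the exact recursion $L_t = (1-\gamma) L_{t-1} + \gamma[f(x_t) + \innp{\nabla f(x_t), v_t - x_t}]$ obtained by dividing $A_t L_t^{\operatorname{FW}} - A_{t-1} L_{t-1}^{\operatorname{FW}} = a_t[f(x_t) + \innp{\nabla f(x_t), v_t - x_t}]$ through by $A_t$, the identity \eqref{eq:line_search_expression_primal_dual} pops out algebraically. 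For \HB{}, I would upper-bound $A_{t-1} L_{t-1}^{\operatorname{HB}}$ by the value of its defining linear sum at the $v_t$ selected at iteration $t$ (the argmin is taken over the longer sum at time $t$); this weakens the exact recursion into the inequality needed.

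Convexity in $\gamma$ is immediate: the first three summands on the RHS of \eqref{eq:line_search_expression_primal_dual} are affine in $\gamma$, and $\gamma \mapsto f((1-\gamma)x_t + \gamma v_t)$ is convex because $f$ is convex and $\gamma \mapsto (1-\gamma)x_t + \gamma v_t$ is affine. Hence the sum is convex.

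For the rate, I would apply $L$-smoothness to $f((1-\gamma)x_t + \gamma v_t) - f(x_t)$ in \eqref{eq:line_search_expression_primal_dual} to recover the smoothed bound \eqref{eq:isolating_primal_dual_gap}. The primal-dual short step \eqref{eq:primal-dual-short} is precisely the unconstrained minimizer over $\gamma \in [0,1]$ of this smoothed RHS, and a line search over the (tighter) RHS of \eqref{eq:line_search_expression_primal_dual} can only yield a smaller value. It therefore suffices to bound what one gets by plugging any feasible $\gamma$ into \eqref{eq:isolating_primal_dual_gap}; the choice $\gamma = 2/(t+2)$ together with $\norm{v_t - x_t} \leq D$ gives
\[
    G_t \leq \frac{t}{t+2} G_{t-1} + \frac{2LD^2}{(t+2)^2},
\]
from which a simple induction starting from the base case $G_0 \leq LD^2/2$ (which follows from the definition of $G_0$ and smoothness, taking $\gamma_0 = 1$) yields the claimed $G_t \leq 4LD^2/(t+2)$. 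The main obstacle I anticipate is the rebasing subtlety in the first step, namely keeping the primal-dual ledger consistent when the step size $\gamma_t$ is chosen adaptively at each iteration, since the convexity and induction portions are essentially routine.
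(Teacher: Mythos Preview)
Your proposal is correct. Your derivation of \eqref{eq:line_search_expression_primal_dual} is exactly what the paper does: stop the telescoping of \cref{sec:review_of_primal_dual_fw} at the pre-smoothness stage, divide through by $A_t$, and write $\gamma_t = a_t/A_t$. Your convexity argument is actually slightly cleaner than the paper's: the paper differentiates twice and appeals to $\langle \nabla^2 f(\cdot)(v_t-x_t), v_t-x_t\rangle \geq 0$, which tacitly assumes twice differentiability that is not part of the hypothesis, whereas your composition-with-an-affine-map argument uses only convexity of $f$.

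For the rate, you and the paper take genuinely different routes. The paper substitutes the primal-dual short step \eqref{eq:primal-dual-short} into \eqref{eq:isolating_primal_dual_gap}, simplifies to the contraction $G_{t-1}-G_t \geq \tfrac{1}{2}G_{t-1}\min\{1, G_{t-1}/(L\norm{v_t-x_t}^2)\}$, and then invokes an external lemma (\cite[Lemma~2.21]{CGFWSurvey2022}) to convert this into the $O(1/t)$ bound. Your approach instead observes that the short step (or line search) minimizes the right-hand side of \eqref{eq:isolating_primal_dual_gap}, so plugging in the suboptimal open-loop choice $\gamma=2/(t+2)$ gives an upper bound, and then closes with a direct induction. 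Your route is more self-contained (no black-box lemma) and makes the comparison with the open-loop analysis transparent; the paper's route makes the per-step contraction explicit, which is useful if one wants to read off rates in other regimes. The ``rebasing subtlety'' you flag is not really an obstacle: since the short step minimizes the one-step bound at the current $G_{t-1}$, the comparison with any fixed $\gamma$ is valid iteration by iteration regardless of the adaptive history.
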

The convergence above for the line search is derived from the one for the primal-dual short step, since the right hand side of \cref{eq:primal-dual-short} upper bounds the one of \cref{eq:line_search_expression_primal_dual}.

    We note that, naturally, if we define the gap as $G_t \defi f(x_{t+1}) - f(x^\ast)$, which is the primal gap, then analyzing the algorithm with the strategy in \cref{eq:adgt_bound}, yields that the primal-dual short steps become regular short steps. In that case, even though we do not know the value $f(x^\ast)$ in the gap definition, the step can still be defined without this knowledge, cf. \cref{remark:pd_short_steps_are_a_generalization}.

\subsection{Primal-dual short steps for Gradient Descent}

\label{sec:primal-dual-short-step-gd}
We now extend the analysis of our primal-dual short steps to the gradient descent algorithm (\newtarget{def:acronym_gradient_descent}{\GD{}}) whose updates are given by $x_{t+1} \gets x_t - a_t \nabla f(x_t)$. We use the Euclidean norm in this section for the problem $\min_{x\in \Rd} f(x)$, where $f$ is convex and $L$-smooth. 

One possible analysis of \GD{} relies on defining a primal-dual gap similarly to the one in \cref{sec:FW_with_regularization}, and then show $A_t G_t \leq A_{t-1}G_{t-1}$ for $a_t = 1 / L$ and $A_1 G_1 = \frac{1}{2}\norm{x_1 - x^\ast}_2^2$, for an initial point $x_1$ and a minimizer $x^\ast$. We provide a sketch of the steps that we take and leave the details to \cref{sec:primal_dual_steps_gd}. The lower bound on $f(x^\ast)$ that we use is
\begin{align*}
     \begin{aligned}
         A_t f(x^\ast) &\geq  \sum_{i=1}^t a_i f(x_i) + \sum_{i=1}^t a_i \innp{\nabla f(x_i), x_{t+1} - x_i} \\
         &\quad + \frac{1}{2}\norm{x_{t+1}-x_1}_2^2 - D^2 \defi A_t L_t, 
     \end{aligned}
    \end{align*}
    where $D$ is an upper bound on the initial distance to a minimizer $\norm{x^\ast-x_1}_2$. Defining the gap as $G_t = f(x_{t+1}) - L_t$, we arrive to
\[
    G_t{\leq}\frac{A_{t-1}}{A_t}G_{t-1} +\norm{\nabla f(x_t)}_2^2 \left(- \frac{a_t A_{t-1}}{A_t} + \frac{a_t^2L}{2} - \frac{a_t^2}{2A_t}\right),
\] 
for $t > 1$ and a similar expression for $t=1$. Recall that $A_t \defi \sum_{i=1}^t a_t$. So at this stage, one can optimize $a_t$ in order to minimize the right hand side, by solving a simple cubic equation, which is what we term the primal-dual short steps for \GD{}. 

We show that the optimal value satisfies $a_t \geq \frac{1}{2L}$, which ultimately yields to a convergence rate no slower than $G_t \leq \frac{LD^2}{t}$, as we formalize in the following. We also show, as above, that we can perform a line search for minimizing a better bound on the primal-dual gap, yielding no worse convergence rates. Recall that $f(x_{t+1}) - f(x^\ast){\,\leq\,}G_t$.

\begin{proposition}\label{prop:gd_primal_dual_steps}\linktoproof{prop:gd_primal_dual_steps}
    Let $f$ be convex and differentiable. The primal-dual gap of \GD{} is bounded by \cref{eq:without_using_smoothness} which is convex on the step $a_t$. If further $f$ is $L$-smooth with respect to $\norm{\cdot}_2$, \GD{} using the primal-dual short step-size or line search on the aforementioned bound then the step size satisfies $a_t \geq \frac{1}{2L}$ and we have 
\[
G_t \leq \frac{LD^2}{t}.
\] 
\end{proposition}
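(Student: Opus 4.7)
My overall strategy is to first derive the gap recursion \cref{eq:without_using_smoothness} without using smoothness, then verify convexity in $a_t$, then apply smoothness to get the explicit quadratic-over-linear expression in $a_t$ shown in the excerpt, and finally optimize to obtain the two claims: the lower bound $a_t \geq 1/(2L)$ and the $O(LD^2/t)$ rate.

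First, starting from the lower bound $L_t$ displayed just before the theorem and the definition $G_t = f(x_{t+1})-L_t$, I would compute $A_t G_t - A_{t-1}G_{t-1}$ directly. The key algebraic identity is the shift $\sum_{i=1}^t a_i\langle\nabla f(x_i),x_{t+1}-x_i\rangle = \sum_{i=1}^{t-1}a_i\langle\nabla f(x_i),x_t-x_i\rangle + \langle \sum_{i=1}^t a_i\nabla f(x_i), x_{t+1}-x_t\rangle + a_t\langle\nabla f(x_t), x_t - x_t\rangle$, together with the GD update $x_{t+1}=x_t - a_t\nabla f(x_t)$ so that $\sum_{i=1}^t a_i\nabla f(x_i) = x_1 - x_{t+1}$. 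Using the $\tfrac12\|x_{t+1}-x_1\|_2^2$ terms and a cancellation with $\tfrac12\|x_t-x_1\|_2^2$ via the identity $\tfrac12\|x_{t+1}-x_1\|^2 = \tfrac12\|x_t-x_1\|^2 + \langle x_t-x_1, x_{t+1}-x_t\rangle + \tfrac12\|x_{t+1}-x_t\|^2$ should produce a clean expression $A_t G_t \leq A_{t-1}G_{t-1} + [f(x_{t+1})-f(x_t)] \cdot A_t - a_t\langle\nabla f(x_t),\ldots\rangle$-type terms; rearranging yields the claimed recursion \cref{eq:without_using_smoothness}. Convexity in $a_t$ then follows by inspection: the only nonlinear-in-$a_t$ ingredient is the contribution of $f(x_{t+1})=f(x_t - a_t\nabla f(x_t))$, which is convex in $a_t$ as $f$ is convex, and all remaining terms are affine in $a_t$ (or convex via $a_t^2/A_t = a_t^2/(A_{t-1}+a_t)$, whose second derivative $2A_{t-1}^2/(A_{t-1}+a_t)^3$ is nonnegative).

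Next, applying the $L$-smoothness upper bound $f(x_{t+1})\leq f(x_t)+\langle\nabla f(x_t),x_{t+1}-x_t\rangle + \tfrac{L}{2}\|x_{t+1}-x_t\|^2$ together with the GD step substitutes $\|\nabla f(x_t)\|_2^2$ for $\|x_{t+1}-x_t\|_2^2/a_t^2$, yielding the explicit bound
\[
 h(a_t) \;\defi\; \frac{A_{t-1}}{A_t}G_{t-1} + \|\nabla f(x_t)\|_2^2\left(-\frac{a_t A_{t-1}}{A_t} + \frac{a_t^2L}{2} - \frac{a_t^2}{2A_t}\right)
\]
stated in the excerpt, with $A_t = A_{t-1}+a_t$. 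Setting $h'(a_t)=0$ produces (after clearing the denominator $A_t^2$) a cubic in $a_t$; I would not solve it explicitly but instead rely on convexity of $h$ to reason about the minimizer's location.

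The main technical hurdle is establishing $a_t^\ast \geq 1/(2L)$. Since $h$ is convex, it suffices to show $h'(1/(2L)) \leq 0$. Computing $h'$ and plugging in $a_t = 1/(2L)$ should, after simplification, reduce to an inequality of the form $\tfrac{1}{2L}\cdot(\text{nonnegative})\leq (\text{nonnegative})$ that holds unconditionally; the clean way I expect this to work is to compare $h(1/(2L))$ with $h(0)=G_{t-1}$ and exploit that $A_{t-1}\geq (t-1)/(2L)$ by induction, making $-a_tA_{t-1}/A_t$ dominate $(a_t^2L/2 - a_t^2/(2A_t))$ at $a_t=1/(2L)$. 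Given $a_t^\ast\geq 1/(2L)$, it follows immediately that $A_t\geq t/(2L)$.

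Finally, to conclude the rate, I would show $A_tG_t \leq A_{t-1}G_{t-1}$ at $a_t = a_t^\ast$ by noting that $h(a_t^\ast)\leq h(1/L)$, and at $a_t=1/L$ the parenthesized coefficient becomes $-A_{t-1}/L + A_t/(2L) - 1/(2L^2) = -A_{t-1}/(2L) \leq 0$, giving $A_th(1/L)\leq A_{t-1}G_{t-1}$. A separate base-case computation at $t=1$ using $L_1$, $x_2 = x_1-a_1\nabla f(x_1)$, the definition $G_1 = f(x_2)-L_1$, and $\|x^\ast-x_1\|_2\leq D$ yields $A_1 G_1 \leq D^2/2$. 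Telescoping gives $A_tG_t\leq D^2/2$, and combined with $A_t\geq t/(2L)$ this produces $G_t\leq LD^2/t$. The line-search variant follows at no extra cost since \cref{eq:without_using_smoothness} upper-bounds the same quantity and optimizing directly over $a_t$ can only do better than any fixed choice.
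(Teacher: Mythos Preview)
Your plan tracks the paper's proof closely: derive the recursion from $A_tG_t-A_{t-1}G_{t-1}$ using the strong convexity of $\Lambda_{t-1}(x)=\sum_{i\le t-1}a_i\langle\nabla f(x_i),x-x_i\rangle+\tfrac12\|x-x_1\|_2^2$ at its minimizer $x_t$, establish convexity in $a_t$, apply smoothness to get the explicit bound $h(a_t)$, and prove $a_t^\ast\ge 1/(2L)$ by checking $h'(1/(2L))\le 0$. One small slip in the convexity discussion: the term $\mathcal G_{t-1}/A_t=\mathcal G_{t-1}/(A_{t-1}+a_t)$ is not affine in $a_t$; it is convex, with second derivative $2\mathcal G_{t-1}/A_t^3\ge 0$, which you must add to the convexity of $a_t^2/A_t$ and of $a_t\mapsto f(x_t-a_t\nabla f(x_t))$.

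The substantive gap is in your telescoping step. From $h(a_t^\ast)\le h(1/L)$ and the computation that, at $a_t=1/L$, one has $(A_{t-1}+1/L)\,h(1/L)\le A_{t-1}G_{t-1}$, you conclude $A_tG_t\le A_{t-1}G_{t-1}$ ``at $a_t=a_t^\ast$''. But the $A_t$ appearing in your two inequalities are \emph{different}: the algorithm's $A_t$ is $A_{t-1}+a_t^\ast$, whereas the one for which you verified the error is nonpositive is $A_{t-1}+1/L$. Chaining $G_t\le h(1/L)$ with $(A_{t-1}+1/L)h(1/L)\le A_{t-1}G_{t-1}$ gives $A_t^\ast G_t\le A_{t-1}G_{t-1}$ only if $a_t^\ast\le 1/L$, and you have not shown this (only $a_t^\ast\ge 1/(2L)$). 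The paper avoids claiming $A_tG_t\le A_{t-1}G_{t-1}$ altogether: it substitutes the comparison step $a_t=1/(2L)$ (not $1/L$), verifies $E_t(1/(2L))\le 0$, deduces $G_t\le \mathcal G_{t-1}/(A_{t-1}+1/(2L))$, and then applies this inequality recursively together with $\mathcal G_k=A_kG_k$ and $A_{t-1}\ge (t-1)/(2L)$ to reach $G_t\le \mathcal G_0/(t/(2L))=LD^2/t$. So the fix is to follow the paper: use $1/(2L)$ as the comparison step and iterate the bound on $G_t$ directly rather than trying to monotone-decrease $A_tG_t$.
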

Interestingly, a notable difference between the line search on these primal-dual bounds in \GD{} and \FW{} algorithms is that convexity was shown in \cref{prop:primal_dual_steps} to hold with respect to the parameter $\gamma$ which corresponds to $\frac{a_t}{A_t}$ whereas for \GD{} in \cref{prop:gd_primal_dual_steps}, the convexity is shown to hold for the parameter $a_t$. 

\section{Experiments}\label{sec:experiments}

\begin{figure*}[ht!]
    \begin{center}
        \includegraphics[width=0.80\textwidth]{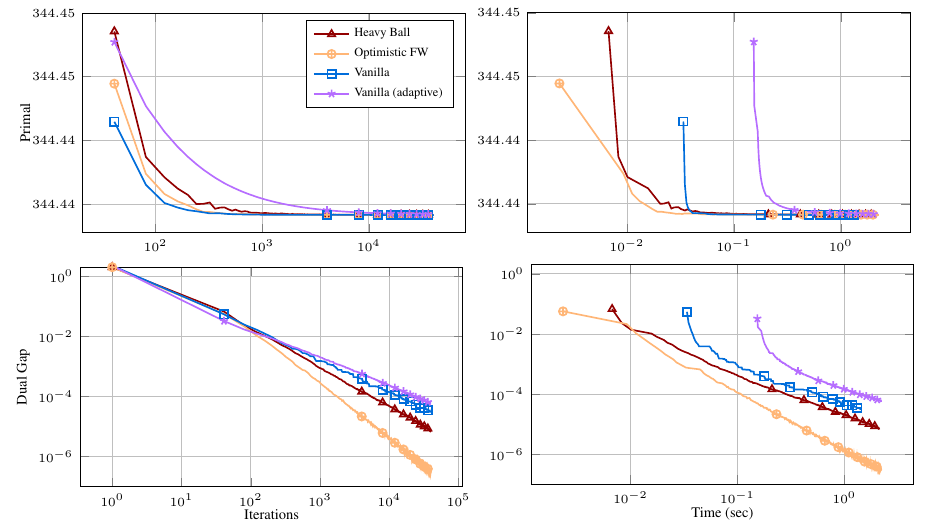}
    \end{center}
    \caption{\label{fig:sq_optimistic_fw_oftrl_prob_simplex} Comparison over the probability simplex of dimension $n=1000$ with objective $f(x) = \norm{x - x_0}_2^2$, where $x_0$ is a random point outside the probability simplex. We can see that the optimistic variant converges faster than the other variants both in iterations and time. Note that we cut datapoints with excessively large primal/dual values, which leads to apparent different starting points in the graphs.}
\end{figure*}

In the following we provide some preliminary experiments demonstrating the good performance of the optimistic variant. All experiments were performed in Julia based on the \texttt{FrankWolfe.jl} package run on a MacBook Pro with an Apple M1 chip with Julia 1.11.1. The code will be made publicly available upon publication. 

In our experiments we compare the optimistic \FW{} variant (with OFTRL) with the heavy ball variant, and the vanilla \FW{} algorithm. For the \FW{} variant, apart from the agnostic step size rule $\gamma_t \doteq \frac{2}{t+2}$ we also consider the adaptive line search (indicated by \texttt{adaptive} in the plots) from \citep{P2023}, a numerically improved variant of \citep{pedregosa2020linearly}, which is the default line search in the \texttt{FrankWolfe.jl} package. As shown in \citep{guelat1986some}, line search is subject to a lower bound that does not hold for, e.g., open-loop step-sizes and in particular the heavy ball variant and the optimistic variants. In fact, there are cases where the line search variant is slower than the vanilla variant as asymptotically shown in \citep{bach2021effectiveness} and later in \citep{WKP2022,WPP2023} for the non-asymptotic case; we observe a similar behavior in our experiments as they satisfy the conditions of \citep{WKP2022,WPP2023}; see also \citep{KAP2021}. 

Our numerical experiments are over various polytopes $P$, so that in particular $\psi(x) \defi \indicator{P}(x)$ is the indicator function of the respective polytope. In the following we will use for the dual gap the definition of $G_t \defi f(x_t) - L_t$, where the lower bound function is defined as 
\begin{align*}
    \begin{aligned}
        f(x^\ast) &\geq \sum_{i=0}^t \frac{a_i}{A_t} f(x_i) + \sum_{i=0}^t \frac{a_i}{A_t} \innp{\nabla f(x_i), x^\ast - x_i}  \\
        &\geq  \sum_{i=0}^t \frac{a_i}{A_t} f(x_i) + \sum_{i=0}^t \frac{a_i}{A_t} \innp{\nabla f(x_i), v_i - x_i}  \defi L_t.
    \end{aligned}
   \end{align*}
In the case of the heavy ball variant from \cref{sec:heavyBall} we set $v_i = v_t$, for all $i \leq t$, and 
$$v_t \in \argmin_{v \in P} \sum_{i=0}^t \frac{a_i}{A_t} f(x_i) + \sum_{i=0}^t \frac{a_i}{A_t} \innp{\nabla f(x_i), v_t - x_i},$$
and we refer to this lower bound as $L_t^{\operatorname{HB}}$ if not clear from the context.
Clearly this is a lower bound and in fact this is precisely the heavy ball vertex that is computed in the heavy ball algorithm. It is important to note that this lower bound and the associated dual gap is valid for \emph{any algorithm} as it is simply a convex combination of linear lower bound functions minimized over $P$. For comparability and due to its natural form and being valid for any algorithm, we use this definition of the gap also for reporting the results of the optimistic variant in our experiments.

In the case of the vanilla \FW{} variant (either with open loop step sizes or with line search) the $v_i$ in the expression above are chosen to be the \FW{} vertices, i.e., 
$$v_i \in \argmin_{v \in P} f(x_i) + \innp{\nabla f(x_i), v - x_i},$$
minimizing each summand separately. This bound is thus separable in contrast to the heavy ball one as we will discuss further below in \cref{rem:separable_gap} and we can use
\begin{align*}
    \begin{aligned}
        f(x^\ast) & \geq \max_{0 \leq i \leq t} f(x_i) + \innp{\nabla f(x_i), v_i - x_i},
    \end{aligned}
   \end{align*}
as lower bound so that the gap becomes the running minimum of the \FW{} gaps across the iterations, which we refer to as $L_t^{\operatorname{FW}}$.

\begin{remark}[Strength of lower bounds]
    \label{rem:separable_gap}
    Suppose that $g(x_t)$ denotes a generic \emph{gap function} that bounds the primal gap at $x_t$, i.e., $f(x_t) - f(x^*) \leq g(x_t)$. We will choose the specific gap function later depending on the context. Observe that the gap function immediately gives a lower bound for $f(x^*)$ simply by rewriting as
    $$
    f(x^*) \geq f(x_t) - g(x_t). 
    $$
    Then in line with the above if we make the choice for the lower bound $L_t$ as
    $$A_t f(x^*) \geq \sum_{\ell = 0}^t a_\ell f(x_\ell) - \sum_{\ell = 0}^t a_\ell g(x_\ell) = A_t L_t,$$
    then in this case, the lower bound $L_t$ cannot be stronger than taking the best lower bound observed so far, since
    \begin{align*}
    f(x^*) & \geq \max_{\ell = 0,\dots, t} f(x_\ell) - g(x_\ell)\\
        &\geq \frac{1}{A_t}\sum_{\ell = 0}^t a_\ell (f(x_\ell) -  g(x_\ell)), 
    \end{align*}
    This is the case because the lower bound is a convex combination of lower bound terms from individual iterations. 

\begin{figure*}[ht!]
    \begin{center}
        \includegraphics[width=0.80\textwidth]{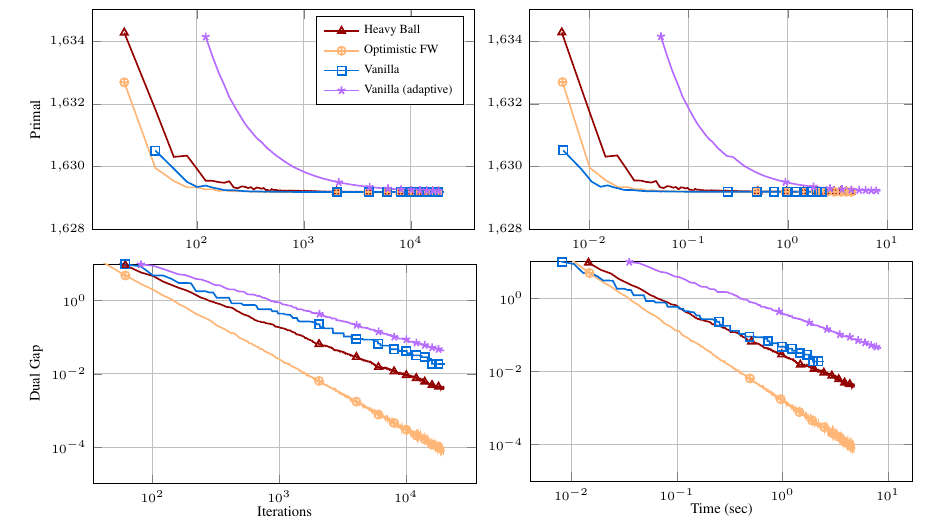}
    \end{center}
    \caption{\label{fig:sq_optimistic_fw_oftrl_prob_simplex_2} \label{fig:non-sq_optimistic_fw_oftrl_k-sparse} Comparison over $k$-sparse polytope of dimension $n=100$ and $k = 10$ with objective $f(x) = \norm{Ax - b}_2^2$, where $A$ and $b$ are random. We can see that the optimistic variant also converges faster than the other variants both in iterations and time. Here we also cut datapoints with excessively large primal/dual values, which leads to apparent different starting points in the graphs.}
\end{figure*}
    
    As mentioned, this is different, e.g., for the heavy ball lower bound function, which does not decompose in individual iterations as the Frank-Wolfe vertex is computed for the cumulative function across rounds. The reason why the primal-dual gap for \HB{} can be significantly better (as in: lower) than the Frank-Wolfe gap is illustrated in the following example: The \FW{} algorithm, commonly used with polytopal constraints, sometimes suffers from the so-called zigzag problem \citep{wolfe1970convergence,guelat1986some} (see also \citet{CGFWSurvey2022} for an in-depth discussion), that usually is due to a minimizer being in the relative interior of a face while the points $v_t$, being the result of an \LMO{}, are chosen as vertices of the polytope. In this scenario, it is possible that $\nabla f(x_t)$ is aligned with the direction $v_t - x_t$ for all $t$, while a convex combination of gradients is close to being perpendicular to the optimal face making the lower bound $L_t^{\operatorname{HB}}$ be closer to the optimal value.
\end{remark}

In the plots we show the primal value and the dual gap vs. the number of iterations and time. We use log-log plots so that the slope is equal to the polynomial order of convergence. As our variants compute one gradient per iteration, the number of iterations is equal to the number of gradient evaluations. Throughout our experiments we see that the optimistic variant significantly outperforms the heavyball variant, the vanilla \FW{}, and often, although not always, also the \FW{} variant with the adaptive line search strategy from \citep{P2023} in the order of convergence.

We also run experiments for the primal-dual short step from \cref{sec:primal-dual-short-step}. However, we found that it behaved similarly than the vanilla \FW{} variant with standard short-steps (or line search), without outperforming this already good heuristic; see \cref{fig:pdss-sq-prob-simplex} in \cref{sec:experiments_appendix}. The reason seems to be that because the primal-dual gap measure is better, the step size is actually often \emph{smaller}, leading to no faster convergence; this in fact is reminiscent of the lower bound in \citep{guelat1986some} and not completely surprising. 

Finally, we also tested whether optimism is really the main source of improvement or whether most of it is explained by using the heavy ball lower bound which shows to be stronger than the vanilla \FW{} lower bound. In fact, we show in \cref{fig:sfw-nsq-prob-simplex} in \cref{sec:experiments_appendix}, that indeed optimism is what is making the algorithm be faster. In fact the heavy ball lower bound applied to the vanilla \FW{} variant is weaker that the normal \FW{} gap in our tests. This strongly suggests that it is really the trajectory of the optimistic variant that is better.

We tested various combinations of functions and feasible regions. For the functions, we used (a) a simple quadratic function of the form $f(x) = \norm{x-x_0}^2$ with $x_0$ being drawn randomly. This is basically euclidean projection onto the feasible region. (b) a more complex, ill-conditioned quadratic function of the form $f(x) = x^T Q x + c^T x$, where the data was randomly generated so that $Q$ is positive definite and $c$ is a random vector. In terms of the feasible regions, we considered (i) the probability simplex $P = \{x \in \R^d \mid \sum_i x_i = 1, x \geq 0\}$, (ii) and the more complex $k$-sparse polytope $P$ which is the convex hull of all $0/1$-vectors with $k$ non-zero entries. We detail the instance parameters in the captions of the figures.

In our preliminary experiments we report results for quadratics, i.e., regression problems. The reason for this is to allow us to build specific setups where the different convergence behaviors of \FW{} variants are known to emerge. We also performed preliminary experiments over portfolio optimization instances (here the objective is the negative log-likelihood of the portfolio returns) and also Optimal Experiment Design instances \citep{hendrych2023solving} (here the objective arises via matrix means) and found that the optimistic variant also outperforms the other variants similarly. We intend to perform at-scale experiements with an optimized implementation within the \texttt{FrankWolfe.jl} package.

\section*{Acknowledgements}
 Research reported in this paper was partially supported through the Research Campus Modal funded by the German Federal Ministry of Education and Research (fund numbers 05M14ZAM,05M20ZBM) and the Deutsche Forschungsgemeinschaft (DFG) through the DFG Cluster of Excellence MATH+. David Martínez-Rubio was partially funded by the project IDEA-CM (TEC-2024/COM-89).

\bibliography{refs}
\bibliographystyle{icml2025}

\clearpage

\appendix

\onecolumn

\section{Overview and some generalizations of Frank-Wolfe's primal-dual analyses}\label{sec:review_of_primal_dual_fw}

As discussed in the introduction, the analysis of Frank-Wolfe algorithms on compact convex sets can be generalized to the problem $f + \psi$ where we assume access to a gradient oracle for $f$ and an that solves the global optimization of the function $\argmin_{x\in \Rd}\innp{w, x} + \psi(x)$ for every $x$, as originally proposed in \citep{nesterov2018complexity}. Here, we provide a short proof of this fact, with a slightly better constant, which in particular proves the claims in \cref{sec:preliminaries} when $\psi$.

Assume $f: \R^n \to \R$ is convex, $L$-smooth, and differentiable in an open set containing $\X \defi \dom(\psi)$ and $\psi: \R^n \to R$ is a convex, proper, lower semi-continuous function. We assume that for every $u \in \R^n$, there exists $\argmin_x\{\innp{u, x} + \psi(x)\}$. Finally, the convergence after $t$ steps will depend on $D_t \defi \max_{\ell=0, \dots, t}\{\norm{v_\ell - x_\ell}\}$ for the points $x_\ell, v_\ell$ with $\ell=0, \dots, t$ as defined in \cref{alg:fw} up to iteration $t$. Similar to the original \FW{} algorithm, we can instead substitute $D_t$ by an upper bound of such quantity, e.g., $\diam(\dom(\psi))$, assuming its value is finite. Recall that we define positive weights $a_\ell > 0$ for all $\ell \geq 0$,  to be determined later, and $A_t \defi \sum_{i=0}^{t} a_i$. Thus, $A_{-1} = 0$.

\begin{algorithm}[H]
    \caption{\label{alg:fw_generalized} Generalized Frank-Wolfe and Heavy-Ball \FW{} algorithms }
    \begin{algorithmic}[1]
        {\footnotesize
        \REQUIRE Functions $f$ and $\psi$, initial point $x_0$. Weights $a_t, \gamma_t$.
            \FOR{$t=0$ \textbf{to} $T$}
            \STATE $a_t \gets 2t + 2$, $A_t \gets \sum_{i=0}^t a_i = (t+1)(t+2)$
           \STATE {\color{mygray}$\diamond$ Choose either \ref{line:heavy_ball_fw} (generalized \newlinkcolor{def:acronym_heavy_ball_frank_wolfe}{HB-FW}{mygray}) or \ref{line:vanilla_fw} (generalized \newlinkcolor{def:acronym_frank_wolfe}{FW}{mygray}) for the entire run:}
            \STATE $v_t \leftarrow \argmin\limits_{v \in \Rd} \Big\{ \sum_{i=0}^{t} a_i\langle \nabla f(x_i), v \rangle + A_t \psi(v) \Big\}$ \label{line:heavy_ball_fw_generalized}
            \STATE $v_t \leftarrow \argmin\limits_{v \in \Rd} \left\{ \langle \nabla f(x_t), v \rangle + \psi(v) \right\}$\label{line:vanilla_fw_generalized}
            \STATE $x_{t+1} \leftarrow \frac{A_{t-1}}{A_t} x_t + \frac{a_t}{A_T} v_t$
            \ENDFOR}
    \end{algorithmic}
\end{algorithm}

We define the following lower bound on $f(x^\ast) + \psi(x^\ast)$, which is a generalization of \cref{eq:lower_bounds}, where $x^\ast$ here is defined as a minimizer in $\argmin_{x\in\R^n} \{ f(x) + \psi(x) \}$:
\begin{align*}
    \begin{aligned}
        & \ A_t (f(x^\ast) + \psi(x^\ast)) \circled{1}[\geq] \sum_{\ell=0}^t a_\ell f(x_\ell) + \sum_{\ell=0}^t a_\ell \innp{\nabla f(x_\ell), x^\ast - x_\ell} + \sum_{\ell=0}^t a_\ell \psi(x^\ast)  \\
        &\circled{2}[\geq]  \sum_{\ell=0}^t a_\ell f(x_\ell) + \sum_{\ell=0}^t a_\ell \innp{\nabla f(x_\ell), v_\ell - x_\ell} + \sum_{\ell=0}^t a_\ell\psi(v_\ell)  \\
        & \defi A_t L_t, 
    \end{aligned}
\end{align*}
where we applied convexity in $\circled{1}$, and for $\circled{2}$, we applied the definition of $v_\ell$ in the algorithm, which is why we define this point. 
    Define the primal-dual gap  
    \begin{equation}
        G_t \defi f(x_{t+1}) - L_t
    \end{equation}
    and note that upper bound $f(x_{t+1})$ is one step ahead, which helps the analysis. We obtain the following, for $t \geq 0$: 
\begin{align}\label{eq:primal_dual_good}
    \begin{aligned}
        &A_t G_t - A_{t-1} G_{t-1} = A_{t} f(x_{t+1}) - A_{t-1}f(x_{t}) + A_{t}\psi(x_{t+1}) - A_{t-1}\psi(x_{t}) \\
        & - \left( a_t f(x_{t}) + \Ccancel[red]{\sum_{\ell=0}^{t-1} a_\ell f(x_\ell)} + a_t\innp{\nabla f(x_t), v_t - x_t} + a_t\psi(v_t) + \Ccancel[blue]{\sum_{\ell=0}^{t-1} a_\ell(\innp{\nabla f(x_\ell), v_\ell - x_\ell} + \psi(v_\ell)) } \right)  \\
        & + \left( \Ccancel[red]{\sum_{\ell=0}^{t-1} a_\ell f(x_\ell)} + \Ccancel[blue]{\sum_{\ell=0}^{t-1}  a_\ell(\innp{\nabla f(x_\ell), v_\ell - x_\ell} + \psi(v_\ell)) } \right) \\
        &\circled{1}[\leq] \innp{\nabla f(x_t), A_{t}(x_{t+1} - x_{t}) - a_t (v_t - x_t)} + \frac{ L A_t}{2}\norm{x_{t+1}-x_t}^2 \\
        &\circled{2}[=] \frac{L a_t^2}{2A_t}\norm{v_{t}-x_t}^2  \circled{3}[\leq] \frac{LD_t^2 a_t^2}{2A_{t}}  \defi E_t.
    \end{aligned}
\end{align}
In $\circled{1}$ we grouped terms to get $A_t(f(x_{t+1})-f(x_t))$ and applied smoothness to this term. We also used the definition of $x_{t+1}$ which along with the convexity of $\psi$, implies $A_t \psi(x_{t+1}) \leq A_{t-1}\psi(x_t) + a_t \psi(v_t)$. In $\circled{2}$ we used twice the definition of $x_{t+1}$ which implies $A_{t} (x_{t+1}-x_t) = a_t (v_{t}-x_t)$. In $\circled{3}$, we bounded the distance of points by the quantity $D_t$. 

Now with the choice $a_t = 2t+2$ and $A_t = \sum_{i=0}^t a_i = (t+1)(t+2)$ that
\begin{align}\label{eq:primal_dual_good_rate}
    \begin{aligned}
        f(x_{t+1})+\psi(x_{t+1}) -(f(x^\ast)+\psi(x^\ast)) \leq G_t \leq  \frac{1}{A_t}\sum_{i=0}^t \frac{LD_t^2 a_i^2}{2A_{i}} = \frac{1}{(t+1)(t+2)}\sum_{i=0}^t \frac{2LD_t^2(i+1)}{i+2} < \frac{2 LD_t^2}{t+2}.
\end{aligned}
\end{align}

Recall that the classical Frank-Wolfe algorithm corresponds to the algorithm and analysis presented above but for $\psi$ being the indicator function $\indicator{\X}$ of a compact convex set $\X$. In such a case, we have the following.

\begin{remark}[Alternative step-sizes in Generalized Frank-Wolfe]
    We note that step-size strategies like the ones described in \cref{eq:non_open_loop_step_sizes} also work in the general case $f+\psi$, by finding 
\[
    \argmin_{x \in\conv{\hat{x}_t, v_t}} \left\{ f(\hat{x}_t) + \innp{\nabla f(\hat{x}_t), x-\hat{x}_t} + \frac{L}{2}\norm{x -\hat{x}_t}^2+\psi(x)\right\} \text{ or } \argmin_{x \in\conv{\hat{x}_t, v_t}} \{f(x) + \psi(x)\},
\] 
respectively.
\end{remark}

\subsection{The Heavy Ball Frank-Wolfe algorithm}
\label{sec:heavyBall}

We will now consider a variant of the Frank-Wolfe algorithm that uses a heavy ball step, similar to the one from \cite{li2021heavy}, however allowing for more flexibility in the choice of the step size parameters and an additional term $\psi(v)$.  As before, let $a_i > 0$ to be determined later and define $A_t = \sum_{i=0}^{t} a_i$.
Let $v_t \defiin \argmin_{v\in\X} \left\{ \innp{\sum_{i=0}^t a_i\nabla f(x_i), v} + A_t \psi(v) \right\}$ and let $x_{t+1} \defi \frac{A_{t-1}}{A_t}x_{t} + \frac{a_t}{A_t} v_t$ be defined as a convex combination of $x_{t}$ and $v_{t}$ (and since $A_0 = a_0$, we have $x_1 =v_0$). We define the following lower bound on $f(x^\ast)$, where $x^\ast$ is defined as a minimizer in $\argmin_{x\in\X} \{ f(x) + \psi(x) \}$:
    \begin{align*}
     \begin{aligned}
         A_t ( f(x^\ast) + \psi(x^\ast) ) &\circled{1}[\geq] \sum_{i=0}^t a_i f(x_i) + \sum_{i=0}^t a_i \innp{\nabla f(x_i), x^\ast - x_i}  + A_t \psi(x^\ast) \\
         &\circled{2}[\geq]  \sum_{i=0}^t a_i f(x_i) + \sum_{i=0}^t a_i \innp{\nabla f(x_i), v_t - x_i} + A_t \psi(v_t)  \defi A_t L_t  \\
     \end{aligned}
    \end{align*}
    where we applied convexity in $\circled{1}$, and the definition of $v_t$ in $\circled{2}$. 
    As before, we define the primal-dual gap  as
    \begin{equation}
        G_t \defi f(x_{t+1}) - L_t.
    \end{equation}
    We obtain the following, for $t \geq 0$: 
    \begin{align}\label{eq:primal_dual_good_hb}
     \begin{aligned}
         A_t& G_t - A_{t-1} G_{t-1} = A_{t} f(x_{t+1}) - A_{t-1}f(x_{t}) + A_t \psi(x_{t+1}) - A_{t-1} \psi(x_{t}) \\
         & \ \ - \left( a_t f(x_{t}) + \Ccancel[red]{\sum_{i=0}^{t-1} a_i f(x_i)} + \sum_{i=0}^{t-1} a_i\innp{\nabla f(x_i), v_t - x_i} + A_{t-1}\psi(v_t)  \right) - a_t\innp{\nabla f(x_t), v_t - x_t} - a_t \psi(v_t)   \\
         & \ \ + \left( \Ccancel[red]{\sum_{i=0}^{t-1} a_i f(x_i)} + \sum_{i=0}^{t-1} a_i\innp{\nabla f(x_i), v_{t-1} - x_i} + A_{t-1}\psi(v_{t-1})\right) \\
         &\circled{1}[\leq] \innp{\nabla f(x_t), A_{t}(x_{t+1} - x_{t}) - a_t (v_t - x_t)}  + \frac{ L A_t}{2}\norm{x_{t+1}-x_t}^2 \\
         &\circled{2}[\leq] \frac{L a_t^2}{2A_t}\norm{v_{t}-x_t}^2  \circled{3}[\leq] \frac{LD_t^2 a_t^2}{2A_{t}}  \defi E_t.
     \end{aligned}
    \end{align}
    In $\circled{1}$ we grouped terms to get $A_t(f(x_{t+1})-f(x_t))$ and applied smoothness to this term. We also dropped the rest of the terms in the parentheses by optimality of of $v_{t-1}$ and dropped the three other terms depending on $\psi$ by its convexity and $A_t x_{t+1} = A_{t-1} x_t + a_t v_t$. In $\circled{2}$ we used the definition of $x_{t+1}$, which is the point that minimizes the right hand side of the smoothness inequality that we applied and so substituting $x_{t+1}$ by $\frac{A_{t-1}}{A_t}x_{t} + \frac{a_t}{A_t} v_t$ leads to something greater. We simplified some terms. In $\circled{3}$, we bounded the distance of points by the diameter of $\X$. Concluding is now the same as for \FW{}.

\subsection{Generalized Frank-Wolfe algorithm revisited}\label{sec:FW_with_regularization}

In \eqref{eq:primal_dual_good} we have obtained a convergence rate for $\min_x f(x) + \psi(x)$. Alternatively, if we are interested in optimizing $f$, we can use a regularizer $\psi$ and conclude convergence with a very similar analysis. As a consequence, $\psi(x^\ast)$ appears in the rates. If $\psi$ is an indicator function of a set this is exactly the heavy ball algorithm from above but for a general $\psi$, it is different. 

The lower bound that we define on $f(x^\ast)$ with $x^\ast \in \argmin_{x\in\X} f(x)$, and $\psi$ as before, is as follows:
    \begin{align}\label{eq:lower_bound_in_FW_heavy_ball_w_regularizer}
     \begin{aligned}
         A_t f(x^\ast)  &\circled{1}[\geq] \sum_{i=0}^t a_i f(x_i) + \sum_{i=0}^t a_i \innp{\nabla f(x_i), x^\ast - x_i} \pm  \psi(x^\ast)  \\
         &\geq  \sum_{i=0}^t a_i f(x_i) + \sum_{i=0}^t a_i \innp{\nabla f(x_i), v_t - x_i} + \psi(v_t) - \psi(x^\ast)  \defi A_t L_t,
     \end{aligned}
    \end{align}
    where $v_t \defiin \argmin_{v\in\R^n}\{\sum_{i=0}^t a_i \innp{\nabla f(x_i), v} + \psi(v)\}$. Similarly to the previous section, we assume access to an oracle that returns one such $v_t$, which is assumed to exist. We used convexity in $\circled{1}$ and we also added and subtracted $\psi$ in order to compute a lower bound that allows us to reduce the gap without knowing $x^\ast$. Finally, defining the gap $G_t \defi f(x_{t+1}) - L_t$, we have, for all $t\geq 0$
\begin{align}
     \begin{aligned}
         A_t& G_t - A_{t-1} G_{t-1} - \ones_{\{t=0\}}\psi(x^\ast)  = A_{t} f(x_{t+1}) - A_{t-1}f(x_{t})  \\
         & \ \ - \left( a_t f(x_{t}) + \Ccancel[red]{\sum_{i=0}^{t-1} a_i f(x_i)} + a_t\innp{\nabla f(x_t), v_t - x_t} + \sum_{i=0}^{t-1} a_i\innp{\nabla f(x_i), v_t - x_i} + \psi(v_t)  \right)  \\
         & \ \ + \left( \Ccancel[red]{\sum_{i=0}^{t-1} a_i f(x_i)} + \sum_{i=0}^{t-1}  a_i\innp{\nabla f(x_i), v_{t-1} - x_i} + \psi(v_{t-1})  \right) \\
         &\circled{1}[\leq] \innp{\nabla f(x_t), A_{t}(x_{t+1} - x_{t}) - a_t (v_t - x_t)} + \frac{ L A_t}{2}\norm{x_{t+1}-x_t}^2 \\
         &\circled{2}[=] \frac{L a_t^2}{2A_t}\norm{v_{t}-x_t}^2 \circled{3}[\leq] \frac{LD^2 a_t^2}{2A_{t}}  \defi E_t.
     \end{aligned}
    \end{align}
    Step $\circled{1}$ uses the optimality of $v_{t-1}$ to bound some terms by $0$, Steps $\circled{2}$ and $\circled{3}$ are identical to the previous analysis.
Now to conclude we choose $a_t = 2t+2$ as before, and so we have $A_t = \sum_{i=0}^t a_i =  (t+1)(t+2)$. But we also have to take into account that we have the extra term $\psi(x^\ast)$ when bounding $A_0 G_0$:
    \begin{align*}
     \begin{aligned}
         f(x_{t+1}) & - f(x^\ast) \leq \frac{1}{A_t}\left(\psi(x^\ast) + \sum_{i=0}^t E_i \right) = \frac{\psi(x^\ast)}{A_t} + \frac{1}{A_t}\sum_{i=0}^t \frac{LD^2 a_i^2}{2A_{i}} \\
         &= \frac{\psi(x^\ast)}{A_t} + \frac{1}{(t+1)(t+2)}\sum_{i=0}^t\frac{2LD^2(i+1)}{(i+2)} < \frac{\psi(x^\ast)}{(t+1)(t+2)} + \frac{2 LD^2}{t+2}.
     \end{aligned}
    \end{align*}

    \begin{remark}[Arbitrary fast rate for $\psi(x^\ast)$] In fact, the part of the rate involving $\psi(x^\ast)$ was arbitrary and made for simplicity. Indeed, the intuition is that in the lower bound \cref{eq:lower_bound_in_FW_heavy_ball_w_regularizer} we are adding a regularizer that is in fact $\psi(x^\ast)/A_t$ (since the whole inequality is multiplied by $A_t$ in particular $f(x^\ast)$), so the larger we make $A_t$ be, the smaller the regularizer we are adding is, and the faster that term will go to $0$. In algebra, if we set $a_i = \Theta(i^{k})$, then we have that $\sum_{i=0}^t a_i^2/A_i = \bigtheta{\sum_{i=0}^t i^{2k-(k+1)}}  = \bigtheta{t^k}$ and $A_t = \bigtheta{t^{k+1}}$ so in any case the part without regularizer is
        \[
            LD^2\frac{1}{A_t}\sum_{i=0}^t \frac{a_i^2}{A_i} = \bigthetal{\frac{LD^2}{t}},
        \] 
        but now $A_t$ grows at any polynomial rate that we want which we can use to decrease $\psi(x^\ast)/A_t$ fast. If we are adding a $\psi$ ourselves on top of an indicator function because we want some properties to hold, most of the time we would like to keep this term being of the same order as the current gap so its contribution to the total rate of convergence is a multiplicative constant.

        This reasoning also applies when finishing the analysis of \cref{thm:optimistic_FW_guarantees}.
    \end{remark}

\section{Proofs for Optimistc FW algorithm}\label{sec:optimistic_proofs}

We start by defining the lower bound that we use on $f(x^\ast)$ for a family of algorithms, using which we define a primal-dual gap that allows to show convergence of our optimistic Frank-Wolfe algorithm. 
This proof is inspired by the anytime online-to-batch reduction  of an optimization problem into an online learning one \citep{cutkosky2019anytime}, which is a generalization and independent work with respect to \citep{nesterov2015quasi}. An interesting modification of the technique that we make is that we allow for selecting a point $y_{t-1}$ such that $f(y_{t-1}) \leq f(x_{t-1})$ before computing the coupling $x_t$. This allows to incorporate heuristics without degrading the convergence rate, such as performing a line search between $x_{t-1}$ and $v_t$. We used this modified reduction in order to provide a primal-dual gap, which is computable if the regret of the corresponding online learning problem is computable. If we instead have a computable upper bound $\widehat{R}_t(x^\ast)$ on the regret $R_t(x^\ast)$ we can correspondingly define and compute a primal-dual gap based on this bound. This will be the case when we instantiate the framework with our optimistic algorithm.

Let $a_t > 0$ for $t \geq 1$ and define $A_t \defi \sum_{i=1}^t a_i$. Given some points $\{v_i\}_{i\geq 1}$, and given $x_0$, we define $x_t \defi \frac{A_{t-1}}{A_t} y_{t-1} + \frac{a_t}{A_t} v_t$, for $t \geq 1$, where $y_{t}$ is any point such that $f(y_{t}) \leq f(x_{t})$. Then, we have for all $t \geq 1$ and $k  = 0, 1, \dots, t$:
\begin{align}\label{eq:lower_bound_generic_anytime_online_to_batch}
    \begin{aligned}
        A_t f(x^\ast) &\circled{1}[\geq] \sum_{i=1}^t a_i f(x_i) +  \sum_{i=1}^t a_i\innp{\nabla f(x_i), x^\ast - x_i} \\
        &\circled{2}[=] \Phi_k \defi A_k f(x_k) + \sum_{i=1}^k a_i \innp{\nabla f(x_i), x^\ast - v_i} + \sum_{i=k+1}^t a_i f(x_i) + \sum_{i=k+1}^t a_i \innp{\nabla  f(x_i), x^\ast - x_i}\\
        & \quad + \sum_{i=0}^{k-1} \Big( A_i D_f(y_i, x_{i+1}) + A_i (f(x_{i}) - f(y_{i})) \Big) \\
        &\circled{3}[=] A_t f(x_t) + \sum_{i=1}^t a_i \innp{\nabla f(x_i), x^\ast - v_i}  + \sum_{i=0}^{t-1} \Big( A_i D_f(y_i, x_{i+1}) + A_i (f(x_{i}) - f(y_{i})) \Big) \\
        &\circled{4}[\geq] A_t f(x_t) - \widehat{R}_t(x^\ast) \defi A_t L_t 
    \end{aligned}
\end{align}
where $\circled{1}$ uses convexity of $f$. We have that $\circled{2}$ and $\circled{3}$ are due to the terms on the sides being $\Phi_0$ and $\Phi_t$ and to $\Phi_k = \Phi_{k+1}$, which holds, since canceling terms such equality is equivalent to $\circled{5}$ below:
\begin{align*}
    \begin{aligned}
        A_k f(x_k) &\circled{5}[=] a_{k+1} \innp{\nabla f(x_{k+1}), x_{k+1} - v_{k+1}} + A_k f(x_{k+1}) + A_k D_f(y_k, x_{k+1}) + A_k (f(x_k) - f(y_k)) \\
        &\circled{6}[=] A_{k} \innp{\nabla f(x_{k+1}), y_{k} - x_{k+1}} + A_k f(x_{k+1}) + A_k D_f(y_k, x_{k+1}) + A_k (f(x_k) - f(y_k)),
    \end{aligned}
\end{align*}
where $\circled{6}$ holds by definition of $x_{k+1}$. Thus, $\circled{5}$ clearly holds since the right hand side of $\circled{6}$ equals the left hand side of $\circled{5}$, by definition of the Bregman divergence. Note that the second summand of the right hand side of $\circled{3}$ equals to minus the regret $R_t(x^\ast)$ of the online learning game with linear losses $a_i\innp{\nabla f(x_t), \cdot}$  and played points $v_i$  with respect to the comparator $x^\ast$. We defined $\widehat{R}_t(x^\ast)$ as any computable upper bound on  $R_t(x^\ast)$, and thus we obtain $\circled{4}$ above by this bound, the assumption on $y_{t-1}$, and $D_f(y_i, x_{i+1}) \geq 0$. 

Now, we define our primal-dual gap as 
\begin{equation}\label{eq:general_primal_dual_gap}
    G_t \defi f(x_t) - L_t = \frac{\widehat{R}_t(x^\ast)}{A_t},
\end{equation}
which, by construction, is an upper bound on the primal gap $f(x_t) -f(x^\ast)$.
Thus, an online learning algorithm whose regret, or a bound of it, is computable, provides us with a computable primal-dual gap, and convergence is linked to the value of the regret. We can now apply optimistic follow-the-regularized leader (OFTRL) or optimistic Mirror Descent (OMD) online learning algorithms and provide regret bounds, by using a not necessarily strongly convex or differentiable regularizer in order to obtain an optimistic generalized \FW{} algorithm, where we assume that we can solve subproblems of the form $\argmin_{x}\{\innp{w, x} + \psi(x) \}$, for any $w \in \R^d$. Typically in \FW{} algorithms, $\psi$ is just the indicator function of the feasible set. We provide an overview of these algorithms, that we have generalized to deal with subdifferentiable regularizers.

\subsection{Optimistc FTRL and optimistic MD with convex subdifferentiable regularizers}
Given a function $\psi$ that is subdifferentiable in a closed convex feasible set $\X$, two points $x, y \in \X$ and $\phi \in \partial \psi(y)$, define the non-differentiable Bregman divergence $D_{\psi}(x, y, \phi)$ as 
\[
    D_{\psi}(x, y; \phi) \defi \psi(x) - \psi(y) - \innp{\phi, x-y},
\] 

We start by presenting a regret bound for Optimistic FTRL, by using our possibly non-differentiable non-strongly convex regularizers.
\begin{theorem}[Optimistic FTRL]\label{thm:oftrl}
    Let $\X$ be a closed convex set and let $\psi_t, \ell_t: \X \to \R$ be closed, proper, convex and subdifferentiable functions in $\X$, for $t\geq 1$. For $t \in [T]$ and given some hints $\tilde{g}_t \in \Rd$, let ${z_t \defiin \argmin_{z \in \X} \{\sum_{i=1}^{t-1} \ell_i(z) + \innp{\tilde{g}_t, z} + \psi_t(z)\}}$, which we assume to exist. Also, define $z_{T+1} = u$ be an arbitrary point $u\in \X$. Then, the regret $R_T(u)$ satisfies:
    \[
        \sum_{t=1}^T ( \ell_t(z_t) - \ell_t(u) ) \leq \psi_{T+1}(u) - \min_{z\in \X}\psi_1(z) + \sum_{t=1}^T \Big(\innp{g_t - \tilde{g}_t, z_t - z_{t+1}} - D_{f_t}(z_{t+1}, z_t; g_t - \tilde{g}_t) + \psi_t(z_{t+1}) - \psi_{t+1}(z_{t+1}) \Big),
    \] 
    for all subgradients $g_t \in \partial \ell_t(z_t)$, where $f_t(z) \defi \sum_{i=1}^t \ell_i(z) + \psi_t(z)$.
\end{theorem}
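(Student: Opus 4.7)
The plan is to follow the classical OFTRL template by pivoting through $z_{t+1}$:
\begin{equation*}
\sum_{t=1}^T \big( \ell_t(z_t) - \ell_t(u) \big) = \sum_{t=1}^T \big( \ell_t(z_t) - \ell_t(z_{t+1}) \big) + \sum_{t=1}^T \big( \ell_t(z_{t+1}) - \ell_t(u) \big).
\end{equation*}
Convexity of $\ell_t$ disposes of the \emph{stability} sum: $\ell_t(z_t) - \ell_t(z_{t+1}) \leq \innp{g_t, z_t - z_{t+1}}$ for any $g_t \in \partial \ell_t(z_t)$. This inner product will later combine with a $\innp{\tilde g_t, z_{t+1} - z_t}$ contribution produced by the second sum to give the $\innp{g_t - \tilde g_t, z_t - z_{t+1}}$ appearing in the statement.

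The bulk of the work is a strong \emph{be-the-regularized-leader} estimate for the second sum, which I would prove by induction on $T$:
\begin{equation*}
\sum_{t=1}^T \big(\ell_t(z_{t+1}) - \ell_t(u)\big) \leq \psi_{T+1}(u) - \min_{z\in\X} \psi_1(z) + \sum_{t=1}^T \Big( \innp{\tilde g_t, z_{t+1} - z_t} - D_{f_t}(z_{t+1}, z_t; g_t - \tilde g_t) + \psi_t(z_{t+1}) - \psi_{t+1}(z_{t+1}) \Big).
\end{equation*}
Setting $h_t \defi \sum_{i<t}\ell_i + \innp{\tilde g_t, \cdot} + \psi_t$ (so that $z_t = \argmin_{\X} h_t$ and $f_t = h_t + \ell_t - \innp{\tilde g_t, \cdot}$), the inductive step propagates the bound using the optimality of $z_{t+1}$ as the minimizer of $f_t + (\psi_{t+1} - \psi_t) + \innp{\tilde g_{t+1}, \cdot}$. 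A direct expansion yields the Bregman identity
\begin{equation*}
D_{f_t}(z_{t+1}, z_t; g_t - \tilde g_t) = D_{h_t}(z_{t+1}, z_t; \xi_t) + D_{\ell_t}(z_{t+1}, z_t; g_t) + \innp{\xi_t, z_{t+1} - z_t},
\end{equation*}
where $\xi_t \in \partial h_t(z_t)$ is the subgradient supplied by the constrained optimality of $z_t$. Nonnegativity of the two Bregman terms plus $\innp{\xi_t, z_{t+1} - z_t} \geq 0$ (feasibility of $z_{t+1}$) ensures $D_{f_t}(z_{t+1}, z_t; g_t - \tilde g_t) \geq 0$, so that subtracting it genuinely tightens the bound at each step. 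The boundary contributions $\psi_{T+1}(u)$ and $-\psi_1(z_1) \leq -\min_{z\in\X}\psi_1(z)$, together with the per-step regularizer correction $\psi_t(z_{t+1}) - \psi_{t+1}(z_{t+1})$, then emerge telescopically from the induction.

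The main obstacle is the non-differentiability of $\psi_t$ (and hence of $h_t$): the constrained-optimality condition for $z_t$ supplies only \emph{some} subgradient $\xi_t \in \partial h_t(z_t)$ satisfying $\innp{\xi_t, u - z_t} \geq 0$ for every $u \in \X$, rather than the zero subgradient available in the unconstrained or smooth case. Anchoring the final Bregman term at the concrete subgradient $g_t - \tilde g_t$ of $f_t$, so that the bound is uniform over the choice of $g_t \in \partial \ell_t(z_t)$, is precisely what necessitates the identity above and careful bookkeeping of the normal-cone term $\innp{\xi_t, z_{t+1} - z_t}$ via feasibility of $z_{t+1}$.
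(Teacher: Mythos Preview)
Your decomposition into a stability sum plus a ``be-the-regularized-leader'' (BTRL) sum is natural, but the BTRL inequality you state is \emph{too strong} and in fact false. Concretely, take $T=1$, $\X=\R$, $\psi_1=\psi_2=\epsilon(\cdot)^2$, $\tilde g_1=0$, $\ell_1(z)=z^2$. Then $z_1=0$, $z_2=u$, $f_1=(1+\epsilon)(\cdot)^2$, $g_1=0$, and your BTRL bound reads
\[
0=\ell_1(z_2)-\ell_1(u)\ \leq\ \epsilon u^2-(1+\epsilon)u^2=-u^2,
\]
which fails for $u\neq 0$. The reason is visible in your own Bregman identity: $D_{f_t}=D_{h_t}+D_{\ell_t}+\innp{\xi_t,z_{t+1}-z_t}$. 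You discard $D_{\ell_t}(z_{t+1},z_t;g_t)$ in the stability step (by using $\ell_t(z_t)-\ell_t(z_{t+1})\leq\innp{g_t,z_t-z_{t+1}}$), and then ask the BTRL step to give back the full $-D_{f_t}$, which includes that same $-D_{\ell_t}$. The two pieces are individually mis-calibrated; only their sum matches the theorem.

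The fix is either to keep the stability step as an \emph{equality}, carrying $-D_{\ell_t}$ forward, or---much more simply---to do what the paper does: skip the split entirely, telescope
\[
\sum_{t=1}^{T}(\ell_t(z_t)-\ell_t(u))=\psi_{T+1}(u)-\psi_1(z_1)+\sum_{t=1}^{T}\Big(f_t(z_t)-f_t(z_{t+1})+\psi_t(z_{t+1})-\psi_{t+1}(z_{t+1})\Big),
\]
and then rewrite each $f_t(z_t)-f_t(z_{t+1})=\innp{g_t-\tilde g_t,z_t-z_{t+1}}-D_{f_t}(z_{t+1},z_t;g_t-\tilde g_t)$ directly from the definition of the Bregman divergence. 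This is purely algebraic; the only inequality used is $-\psi_1(z_1)\leq-\min_{z\in\X}\psi_1(z)$. The observation that $g_t-\tilde g_t\in\partial f_t(z_t)$ (from optimality of $z_t$) serves only to justify that $D_{f_t}(\cdot,\cdot;g_t-\tilde g_t)\geq 0$, which is needed downstream but not for the theorem itself. Your careful normal-cone bookkeeping via $\xi_t$ is therefore unnecessary here.
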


\begin{proof}
    First, note that since $z_t = \argmin_{z \in \X}\{f_t(z) - \ell_t(z) + \innp{\tilde{g}_t, z}\}$, we have $0 \in \partial(f_t - \ell_t + \innp{\tilde{g}_t, \cdot})(z_t)$ and thus $g_t - \tilde{g}_t \in \partial f(z_t)$ for any $g_t \in \partial \ell_t(z_t)$, so the expression $D_{f_t}(z_{t+1}, z_t; g_t-\tilde{g}_t)$ above makes sense. We bound the regret as 
\begin{align}\label{eq:ftrl_analysis}
    \begin{aligned}
        \sum_{t=1}^{T} &(\ell_{t}(z_t) - \ell_{t}(u)) = \sum_{t=1}^{T} \left[ \left(\ell_{t}(z_t) + \sum_{i=1}^{t-1} \ell_{i}(z_t) + \psi_t(z_t)\right) - \left(\sum_{i=1}^t \ell_{i}(z_{t+1}) +  \psi_{t+1}(z_{t+1})\right) \right] \\
        & \ \ \ + \psi_{T+1}(u) - \psi_1(z_1)  \\
        &= \sum_{t=1}^{T} \left[ \left(\sum_{i=1}^{t} \ell_{i}(z_t) + \psi_t(z_t)\right) - \left(\sum_{i=1}^t \ell_{i}(z_{t+1}) + \psi_{t}(z_{t+1})\right)  + \psi_{t}(z_{t+1}) - \psi_{t+1}(z_{t+1}) \right]\\
        &\ \ \ + \psi_{T+1}(u) - \psi_1(z_1)\\
        &\leq \psi_{T+1}(u) - \min_{z\in \X}\psi_1(z) + \sum_{t=1}^T \Big(\innp{g_t - \tilde{g}_t, z_t - z_{t+1}} - D_{f_t}(z_{t+1}, z_t; g_t - \tilde{g}_t) + \psi_t(z_{t+1}) - \psi_{t+1}(z_{t+1}) \Big).
    \end{aligned}
\end{align}
    Above, we simply add and subtract terms and in the inequality we just bound $-\psi_1(z_1) \leq \min_{z\in\X}\psi_1(z)$ .
\end{proof}

\begin{corollary}\label{corol:oftrl}
    Under the assumptions of \cref{thm:oftrl}, for time-invariant $\psi_t = \psi$ we have
\begin{align}
    \begin{aligned}
        \sum_{t=1}^T ( \ell_t(z_t) - \ell_t(u) ) &\leq \psi_{T+1}(u) - \min_{z\in \X}\psi_1(z) + \sum_{t=1}^T \innp{g_t - \tilde{g}_t, z_t - z_{t+1}} - D_{f_t}(z_{t+1}, z_t; g_t - \tilde{g}_t) \\
        & \leq \psi_{T+1}(u) - \min_{z\in \X}\psi_1(z) + \sum_{t=1}^T \innp{g_t - \tilde{g}_t, z_t - z_{t+1}},
    \end{aligned}
\end{align}
\end{corollary}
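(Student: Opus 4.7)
The statement is a direct specialization of \cref{thm:oftrl}, so the plan is to plug in $\psi_t = \psi$ into the bound given there and simplify. First I would invoke the conclusion of \cref{thm:oftrl} verbatim, which contains the summand $\psi_t(z_{t+1}) - \psi_{t+1}(z_{t+1})$ inside the telescoping-style correction term. Under the hypothesis $\psi_t = \psi$ for all $t$, each of these differences collapses to $\psi(z_{t+1}) - \psi(z_{t+1}) = 0$, so the entire contribution of this correction disappears and the first displayed inequality emerges immediately.

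For the second inequality, I would argue that the term $D_{f_t}(z_{t+1}, z_t; g_t - \tilde{g}_t)$ is nonnegative and thus can be dropped (it enters with a minus sign). This quantity is precisely the generalized Bregman divergence associated with the convex function $f_t = \sum_{i=1}^t \ell_i + \psi_t$, evaluated with the subgradient $g_t - \tilde{g}_t \in \partial f_t(z_t)$. That subgradient membership was already established inside the proof of \cref{thm:oftrl} from the first-order optimality condition for $z_t$, namely $0 \in \partial(f_t - \ell_t + \innp{\tilde{g}_t, \cdot})(z_t)$, which rearranges to $g_t - \tilde{g}_t \in \partial f_t(z_t)$ for any $g_t \in \partial \ell_t(z_t)$. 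By the very definition of a subgradient, $f_t(z_{t+1}) \geq f_t(z_t) + \innp{g_t - \tilde{g}_t, z_{t+1} - z_t}$, which is exactly $D_{f_t}(z_{t+1}, z_t; g_t - \tilde{g}_t) \geq 0$.

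There is no substantive obstacle here; the entire content of the corollary is (i) the termwise cancellation of the time-invariant regularizer inside the sum, and (ii) the standard nonnegativity of a Bregman divergence evaluated at a valid subgradient. The only point requiring mild care is that the subgradient appearing inside the Bregman divergence is the shifted quantity $g_t - \tilde{g}_t$ rather than $g_t$ alone, so one must use the specific membership $g_t - \tilde{g}_t \in \partial f_t(z_t)$ recorded in the proof of \cref{thm:oftrl} and not some generic choice of subgradient of $f_t$.
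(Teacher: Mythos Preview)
Your proposal is correct and matches the paper's own proof essentially verbatim: the paper simply observes that the $\psi_t(z_{t+1}) - \psi_{t+1}(z_{t+1})$ terms vanish when $\psi_t = \psi$, and then drops the nonnegative Bregman divergences of the convex functions $f_t$. Your write-up just spells out in more detail the subgradient membership that justifies this nonnegativity, which the paper already established inside the proof of \cref{thm:oftrl}.
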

\begin{proof}
    The result follows by noticing that the terms with $\psi_t$ in the sum cancel and that the Bregman divergences of the convex functions $f_t$ are non-negative.
\end{proof}

Now we present an alternative algorithm to the above, the optimistic Mirror Descent algorithm. First, we prove a lemma about the generic update rule of Mirror Descent:
$x_{t+1} \in \argmin_{x\in X} \left\{ \eta_t\innp{g, x} + D_{\psi}(x, x_t, \phi_t) \right\}$, where an assumption is made for $\psi$ that the $\argmin$ is always non empty. This holds for instance, for $\psi$ being the indicator function of a convex compact set, or $\psi$ being a Legendre function. The following corresponds to the classical mirror descent lemma, but for non-differentiable maps.

\begin{lemma}[Mirror Lemma with non-Differentiable Mirror Map]\label{lemma:mirror_lemma_non_diff}
    Given a closed convex set $\X$, let $\psi$ be proper, closed, convex and subdifferentiable in $\X$, and let $g\in \R^d$. If  $x_{t+1}\in\argmin_{x\in\X}\{\innp{g, x} + D_\psi(x, x_t; \phi_t) \}$ exists, then for all $u \in \X$ and all $\phi_{t+1} \in \partial \psi(x_{t+1})$:
    \[
        \innp{g, x_{t+1} - u} \leq D_{\psi}(u, x_t; \phi_t) - D_{\psi}(x_{t+1}, x_t; \phi_t)  - D_{\psi}(u, x_{t+1}; \phi_{t+1}).
    \] 
\end{lemma}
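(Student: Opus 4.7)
My plan rests on combining an algebraic three-point identity for the generalized Bregman divergence with the first-order optimality condition at $x_{t+1}$.

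\textbf{Step 1 (three-point identity).} I would first verify, by expanding each of the three Bregman terms via its definition and cancelling the $\psi(u)$ and $\psi(x_t)$ contributions, that for any $u, x_t, x_{t+1}$ and any vectors $\phi_t, \phi_{t+1}$ (independent of whether they are subgradients of anything),
\begin{equation*}
    D_\psi(u, x_t; \phi_t) - D_\psi(x_{t+1}, x_t; \phi_t) - D_\psi(u, x_{t+1}; \phi_{t+1}) = \innp{\phi_{t+1} - \phi_t, u - x_{t+1}}.
\end{equation*}
The remaining linear terms rearrange exactly into the right-hand side.

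\textbf{Step 2 (optimality at $x_{t+1}$).} I would then invoke the optimality of $x_{t+1} \in \argmin_{x \in \X}\{\innp{g, x} + D_\psi(x, x_t; \phi_t)\}$. Regarded as a function of $x$, this objective equals $\innp{g - \phi_t, x} + \psi(x)$ up to a constant, hence it is convex and subdifferentiable. The first-order optimality condition over the closed convex set $\X$ therefore produces some $\phi_{t+1} \in \partial \psi(x_{t+1})$ and some $n \in N_{\X}(x_{t+1})$ with
\begin{equation*}
    g + \phi_{t+1} - \phi_t + n = 0.
\end{equation*}
Pairing this identity with $u - x_{t+1}$ and using $\innp{n, u - x_{t+1}} \leq 0$ for $u \in \X$ gives
\begin{equation*}
    \innp{g, x_{t+1} - u} \leq \innp{\phi_{t+1} - \phi_t, u - x_{t+1}}.
\end{equation*}

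\textbf{Step 3 (combine).} Finally, I would rewrite the right-hand side via the three-point identity of Step 1 applied to this same $\phi_{t+1}$, yielding the claim.

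The only delicate point is that the $\phi_{t+1}$ appearing in the final bound is not an arbitrary element of $\partial \psi(x_{t+1})$ but rather the specific subgradient delivered by the optimality condition in Step 2; the calling analysis (the OMD analysis feeding into \cref{thm:optimistic_FW_guarantees}) will carry this same choice forward to the next iteration. Beyond tracking this subgradient selection carefully, the argument is a straightforward generalization of the standard Bregman three-point lemma to non-differentiable mirror maps, and I do not anticipate any serious obstacle.
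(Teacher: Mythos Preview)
Your approach is essentially the paper's: the paper defines $F(x) = \innp{g,x} + D_\psi(x, x_t; \phi_t)$ and invokes $F(x_{t+1}) + D_F(u, x_{t+1}; \cdot) \leq F(u)$ from first-order optimality, which when expanded is exactly your three-point identity combined with the variational inequality. Your caveat is well taken and matches the paper's own development---its proof (and the remark immediately following it) also delivers the bound only for the specific $\phi_{t+1}$ produced by optimality, so the ``for all $\phi_{t+1}$'' in the statement is an imprecision that the downstream OMD analysis never relies on.
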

We note that by optimality of $x_{t+1}$, we have $0 \in g + \partial \psi(x_{t+1}) - \phi_t$ and so a possible choice of $\phi_{t+1}$ is $\phi_t - g$.

\begin{proof}
    The point $x_{t+1}$ is a minimizer of $F(x) \defi \innp{g, x} + D_\psi(x, x_t; \phi_t)$. If we substitute the definition of $F$ into the following expression, implied by the first order optimality condition of $x_{t+1}$, $F(x_{t+1}) + D_{F}(u, x_{t+1}; \phi_{t+1}) \leq F(u)$, then we obtain the lemma above. 
\end{proof}

The following theorem about mirror descent is a slight generalization over the common one using not necessarily differentiable regularizers and using a hint for the first step. Compare with, for instance, \citep[Theorem 6.20]{orabona2019modern} with constant step size.

\begin{theorem}[Optimistic MD]\label{thm:omd}
    Let $\X$ be a closed convex set and let $\psi, \ell_t: \X \to \R$ be closed, proper, convex and subdifferentiable functions in $\X$, for $t \in [T]$. For $t \in [T]$, and given some hints $\tilde{g}_t \in \Rd$ and $g_0 = \tilde{g}_0 = 0$, $z_0 \in \X$, let ${z_t \defiin \argmin_{z \in \X} \{ \innp{g_{t-1} + \tilde{g}_t - \tilde{g}_{t-1}, z} + D_\psi(z, z_{t-1}; \phi_{t-1})\}}$ for $\phi_{t-1} \in \partial \psi(z_{t-1})$, which we assume it exists. Also define $z_{T+1} = u$ be an arbitrary point $u\in \X$. Then, the regret $R_T(u)$ satisfies:
\begin{align}
    \begin{aligned}
        \sum_{t=1}^T ( \ell_t(z_t) - \ell_t(u) ) &\leq D_{\psi}(u, z_0; \phi_0) + \sum_{t=1}^T \Big(\innp{g_t - \tilde{g}_t, z_t - z_{t+1}} - D_{\psi}(z_{t+1}, z_t; \phi_t) \Big) - D_\psi(z_1, z_0; \phi_0) \\
        &\leq  D_{\psi}(u, z_0; \phi_0) + \sum_{t=1}^T \innp{g_t - \tilde{g}_t, z_t - z_{t+1}}.
    \end{aligned}
\end{align}
    for all subgradients $g_t \in \partial \ell_t(z_t)$, $t \geq 1$, where $z_{T+1}$ is defined as above with $\tilde{g}_{T+1} = 0$.
\end{theorem}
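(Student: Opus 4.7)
The plan is to mimic the structure of the optimistic FTRL proof (\cref{thm:oftrl}), but using the non-differentiable Mirror Lemma (\cref{lemma:mirror_lemma_non_diff}) as the per-step inequality and then telescoping. The core observation is that the update rule for $z_{t+1}$ has the form of a single Mirror Descent step with the ``effective subgradient'' $g_t + \tilde{g}_{t+1} - \tilde{g}_t$, so that one can apply the lemma verbatim.

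First, by convexity of $\ell_t$, for any $g_t \in \partial \ell_t(z_t)$ I get $\ell_t(z_t) - \ell_t(u) \leq \innp{g_t, z_t - u}$, reducing the regret bound to a bound on $\sum_{t=1}^T \innp{g_t, z_t - u}$. Next, I apply \cref{lemma:mirror_lemma_non_diff} to the update yielding $z_{t+1}$ (with $g = g_t + \tilde{g}_{t+1} - \tilde{g}_t$, taking $x_t = z_t$, $x_{t+1} = z_{t+1}$), obtaining, for every $t = 0, 1, \ldots, T$,
\[
    \innp{g_t + \tilde{g}_{t+1} - \tilde{g}_t, z_{t+1} - u} \leq D_{\psi}(u, z_t; \phi_t) - D_{\psi}(z_{t+1}, z_t; \phi_t) - D_{\psi}(u, z_{t+1}; \phi_{t+1}).
\]

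I then sum this over $t = 0, \ldots, T$. The first and third terms on the right telescope to $D_{\psi}(u, z_0; \phi_0) - D_{\psi}(u, z_{T+1}; \phi_{T+1}) = D_{\psi}(u, z_0; \phi_0)$, since the boundary convention $z_{T+1} = u$ kills the second summand. On the left I split
\[
    \sum_{t=0}^T \innp{g_t + \tilde{g}_{t+1} - \tilde{g}_t, z_{t+1} - u} = \sum_{t=0}^T \innp{g_t, z_{t+1} - u} + \sum_{t=0}^T \innp{\tilde{g}_{t+1} - \tilde{g}_t, z_{t+1} - u}.
\]
Using $g_0 = 0$, the first sum equals $\sum_{t=1}^T \innp{g_t, z_t - u} - \sum_{t=1}^T \innp{g_t, z_t - z_{t+1}}$. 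For the second sum I use Abel summation: shifting the index in $\sum_t \tilde{g}_{t+1}(z_{t+1}-u)$ and then exploiting $\tilde{g}_0 = 0$ and $z_{T+1} = u$ (with $\tilde{g}_{T+1} = 0$) to eliminate the boundary contributions, I obtain $\sum_{t=1}^T \innp{\tilde{g}_t, z_t - z_{t+1}}$.

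Plugging everything back and isolating $\sum_{t=1}^T \innp{g_t, z_t - u}$ yields
\[
    \sum_{t=1}^T \innp{g_t, z_t - u} \leq D_{\psi}(u, z_0; \phi_0) + \sum_{t=1}^T \innp{g_t - \tilde{g}_t, z_t - z_{t+1}} - \sum_{t=0}^T D_{\psi}(z_{t+1}, z_t; \phi_t),
\]
which, after separating out the $t=0$ Bregman term as $D_{\psi}(z_1, z_0; \phi_0)$, is exactly the first displayed bound. The second, weaker inequality follows simply by dropping the non-negative Bregman divergences $D_{\psi}(z_{t+1}, z_t; \phi_t) \geq 0$.

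The main obstacle is bookkeeping: keeping indices aligned between the two sums on the left, checking that the boundary conventions ($g_0 = \tilde{g}_0 = 0$, $z_{T+1} = u$, $\tilde{g}_{T+1} = 0$) make the right endpoint terms vanish after Abel summation, and making sure that the $t=0$ application of the Mirror Lemma (which generates the initial $D_{\psi}(u, z_0; \phi_0)$) is justified by the convention $g_0 = \tilde{g}_0 = 0$ so that the effective subgradient used in that step is just $\tilde{g}_1$. Everything else is mechanical.
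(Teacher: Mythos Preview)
Your proposal is correct and follows essentially the same route as the paper: apply the Mirror Lemma (\cref{lemma:mirror_lemma_non_diff}) to each update with effective direction $g_t + \tilde{g}_{t+1} - \tilde{g}_t$, sum from $t=0$ to $T$, telescope the $D_\psi(u,\cdot)$ terms, and rearrange. The paper first adds $\innp{g_t + \tilde{g}_{t+1} - \tilde{g}_t, z_t - z_{t+1}}$ to both sides so that the left side reads $\innp{\cdot, z_t - u}$ before summing, whereas you keep $\innp{\cdot, z_{t+1} - u}$ and handle the index shift via Abel summation; this is purely cosmetic. One minor point to make explicit: for $t=T$ you invoke the Mirror Lemma while also setting $z_{T+1}=u$, but the lemma requires $z_{T+1}$ to be the argmin. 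This is harmless because with $z_{T+1}=u$ the $t=T$ inequality degenerates to $0\le 0$; the paper instead takes $z_{T+1}$ to be the genuine next iterate (with $\tilde{g}_{T+1}=0$) and drops $D_\psi(u,z_{T+1};\phi_{T+1})\ge 0$. Either convention works and yields the stated bound.
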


\begin{proof}
    Fix the choices $g_t \in \partial \ell_t(z_t)$ for $t\in [T]$ and define $\phi_t \defi g_{t-1} - \tilde{g}_t + \tilde{g}_{t+1}$. Applying \cref{lemma:mirror_lemma_non_diff} and adding a term with $z_{t}$, we obtain
    \begin{equation}\label{eq:aux:multiple_steps_omd}
        \innp{g_{t} +\tilde{g}_{t+1} - \tilde{g}_{t}, z_{t} - u} \leq D_\psi(u, z_{t}; \phi_{t}) - D_{\psi}(z_{t+1}, z_{t}, \phi_{t}) - D_\psi(u, z_{t+1}; \phi_{t+1}) + \innp{g_{t} +\tilde{g}_{t+1} - \tilde{g}_{t}, z_{t} - z_{t+1}}.
    \end{equation}
    Adding up the above from $t=0$ to $T$, and taking into account that $g_0 = \tilde{g}_0 = 0$, and setting $\tilde{g}_{T+1}$ we obtain an inequality whose left hand side is
\begin{align*}
    \begin{aligned}
        \sum_{t=0}^T &\innp{g_{t} +\tilde{g}_{t+1} - \tilde{g}_{t}, z_{t} - u} = \sum_{t=1}^T \innp{g_t, z_t - u} + \Ccancel[red]{\innp{\tilde{g}_{T+1} - \tilde{g}_0, u}} + \sum_{t=0}^T \innp{\tilde{g}_{t+1} -\tilde{g}_t, z_t}  \\
        &\circled{1}[\geq] \sum_{t=1}^T \Big(\ell_{t}(z_t) - \ell_t(u) \Big) + \sum_{t=0}^T \innp{\tilde{g}_{t+1}, z_t - z_{t+1}}.
    \end{aligned}
\end{align*}
    Note that we can set $\tilde{g}_{T+1}$ to any value since it does not play a role in any of the first $T$ predictions. For simplicity, we used $\tilde{g}_{T+1} = 0$. In $\circled{1}$ above, for the first summand we used the subgradient property, the second summand vanished by our choice of hints and then we used an equality for the last term, using again that $\tilde{g}_0 = \tilde{g}_{T+1}= 0$.

    Now if we combine the above with the left hand side of the result from adding up \cref{eq:aux:multiple_steps_omd} from $t=0$ to $T$, teslescoping and dropping $D_\psi(u, x_{T+1}; \phi_{T+1})$ we obtain
\begin{align*}
    \begin{aligned}
        \sum_{t=1}^T  \Big(\ell_{t}(z_t) - \ell_t(u) \Big) + \Ccancel[red]{\sum_{t=0}^T \innp{\tilde{g}_{t+1}, z_t - z_{t+1}}} &\leq D_\psi(u, z_0; \phi_0) - \sum_{t=0}^T D_\psi(z_{t+1}, z_t; \phi_t) + \sum_{t=0}^T \innp{g_{t} +\Ccancel[red]{\tilde{g}_{t+1}} - \tilde{g}_{t}, z_{t} - z_{t+1}} \\
        & \circled{1}[\leq] D_\psi(u, z_0; \phi_0) + \sum_{t=1}^T \innp{g_{t} - \tilde{g}_{t}, z_{t} - z_{t+1}},
    \end{aligned}
\end{align*}
    where in $\circled{1}$ we drop the Bregman divergence terms and we start the sum from $t=1$ since it is $0$ for $t=0$. The inequalities above equal the one in the statement. 

\end{proof}

\begin{remark}[Constant step size FTRL as an instance of OMD]\label{remark:equivalence_of_ftrl_and_omd}
It is well known that FTRL and Mirror Descent are equivalent for constant step sizes in the unconstrained setting, and so are their optimistic variants. With our non-differentiable extension we can see that actually in the constrained case these algorithms are also equivalent, for a specific choice of subgradients in the MD algorithm.
 Indeed, because of the optimality of $z_t$, we have $0 \in (g_{t-1} + \tilde{g}_t - \tilde{g}_{t-1}) + \partial\psi(z_t) - \phi_{t-1}$, and so $\phi_t \defi \phi_{t-1} - (g_{t-1} + \tilde{g}_t - \tilde{g}_{t-1}) \in \partial \psi(z_t)$ can be our next subgradient. In fact, if we choose $z_0 \in \argmin_{z\in\X} \psi(z)$, we can make the choice $\phi_0 = 0$, and using the recurrence in the definition of $\phi_t$, we obtain $\phi_t = -\tilde{g}_t - \sum_{i=1}^{t-1} g_i$ by using $g_0 = \tilde{g_0} = 0$. So the update rule becomes 
    \[
        z_t \in \argmin_{z\in\X} \{ \innp{g_{t-1} + \tilde{g}_t - \tilde{g}_{t-1}, z} + \psi(z) - \innp{\phi_{t-1}, z} \} = \argmin_{z\in\X} \left\{  \innp{ \tilde{g}_t + \sum_{i=1}^{t-1} g_i, z} + \psi(z)  \right\},
    \] 
    so under this choice the algorithm becomes equivalent to OFTRL with constant regularizer $\psi$. However, note that other choices of subgradients $\phi_{t}$ yield different update rules.
\end{remark}

\subsection{Convergence rate of optimistic FW}\label{sec:convergence_rate_optimistic_FW}

Given the theory developed so far in this section, we can now show the convergence of the two variants in our \cref{alg:optimistic_frank_wolfe}. We denote $G_t^{\operatorname{OP}}$ the primal-dual gap in \cref{eq:general_primal_dual_gap} when we use either of our two optimistic update rules and where the upper bound of the regret $R_t(x^\ast)$ is $\widehat{R}_t(x^\ast) \defi \max_{v\in \X} R_t(v)$.

\OptimisticFWGuarantees*

Generally \FW{} algorithms are applied to optimization problems with compact feasible sets, in which case $x^\ast$ above exists. However, the generalized \FW{} framework does not assume the feasible set is compact or that $f$ has a minimizer. We note that in the proof, the value of the parameters $a_t$ is not used until the last inequality in the theorem statement. One can also set $a_t = \Theta(t^c)$ for a constant $c > 1$, thus obtaining $A_t = \Theta(t^{c+1})$ and a rate $f(x_t) - f(u) = \bigo{ \frac{\psi(u)-\psi(x_1)}{t^{c+1}} + \frac{1}{t^{c+1}}\sum_{i=1}^t \frac{LD^2 i^{2c}}{i^{c+1}} } = \bigo{ \frac{\psi(u)-\psi(x_1)}{t^{c+1}} + \frac{LD^2}{t} }$. Thus, we can reduce the influence of $\psi$ on the convergence rate polynomially. 

We also note that the most common use of Frank-Wolfe corresponds to $\psi$ being the indicator function of a convex compact set $\X$, in which case we obtain $f(x_t) - f(x^\ast) \leq \frac{4LD^2}{t+1}$.

\begin{proof}\linkofproof{thm:optimistic_FW_guarantees}
    We start proving the statement for the choice of OFTRL in Line \ref{line:fw_oftrl} of \cref{alg:optimistic_frank_wolfe} that prescribes how the points $v_i$ are defined. Denoting $R_t(u)$ the corresponding regret at $u$ after $t$ steps, taking into account that we use $a_t g_i \defi a_i\nabla f(x_{i-1})$ as hints for $i\geq 1$, and defining $\tilde{v}_{i+1} = v_{i+1}$ for $i < t$ and $\tilde{v}_{t+1} = u$, we obtain
    \begin{equation}\label{eq:ineqs_of_opti_fw}
   \begin{split}
       f(x_t) - f(u) \leq G_t^{\operatorname{OP}}  &\circled{1}[\leq]\frac{R_t(u)}{A_t} \\
           &\circled{2}[\leq] \frac{\psi(u)-\psi(x_1)}{A_t} + \frac{1}{A_t}\sum_{i=1}^t a_i \innp{\nabla f(x_i) - g_i, v_i -\tilde{v}_{i+1}} \\
           &\circled{3}[\leq] \frac{\psi(u)-\psi(x_1)}{A_t}  + \frac{1}{A_t}\sum_{i=1}^t a_i L\norm{x_i-x_{i-1}}D \\
           &\circled{4}[\leq] \frac{\psi(u)-\psi(x_1)}{A_t}  + \frac{1}{A_t}\sum_{i=1}^t \frac{a_i^2LD^2}{A_i}\\
           &\circled{5}[\leq]\frac{\psi(u)-\psi(x_1)}{t(t+1)} + \frac{1}{t(t+1)}\sum_{i=1}^t\frac{4iLD^2}{i+1}  \\
           &\circled{6}[<]\frac{\psi(u)-\psi(x_1)}{t(t+1)} + \frac{4 LD^2}{t+1}.
  \end{split}
\end{equation}
    where $\circled{1}$ uses \cref{eq:general_primal_dual_gap}, which holds since our algorithm has the form described at the beginning of \cref{sec:optimistic_proofs} for the points $v_t$ running the OFTRL algorithm. Inequality $\circled{2}$ holds by \cref{corol:oftrl}, since our points $v_i$ are computed according to the OFTRL algorithm. Then, $\circled{3}$ uses $g_t = \nabla f(x_{t-1})$, $L$-Lipschitzness of $\nabla f(\cdot)$, the general Cauchy-Schwarz inequality and bounds $\norm{v_i - \tilde{v}_{i+1}} \leq D$ for all $i\in[t]$, and $\circled{4}$ uses that by definition of $x_i$, we have $x_{i} - x_{i-1} = \frac{a_i}{A_i} (v_i - x_{i-1})$ for $i\geq 1$ and bounds $\norm{v_i - x_{i-1}}\leq D$. This yields the first part of the theorem statement. Now, substituting the choices of $a_t = 2t$, and thus $A_t = \sum_{i=0}^t a_i =  t(t+1)$ we obtain $\circled{5}$, and a simple bound gives $\circled{6}$.

    The proof for the OMD variant in Line \ref{line:fw_omd} of \cref{alg:optimistic_frank_wolfe} is identical, except that in $\circled{2}$ above we use \cref{thm:omd} and so $\tilde{v}_{t+1}$ changes to be a point in $\argmin_{v\in\Rd}\{\innp{g_{t} - \tilde{g}_{t}, v} + D_\psi(z, v_{t}; \phi_{t})\}\}$, that is, it corresponds to the next point computed with the update rule when we choose $\tilde{g}_{t+1} = 0$. And also, we have $D_{\psi}(u, x_0)$ instead of $\psi(u) - \psi(x_1)$. The rest of the inequalities in \cref{eq:ineqs_of_opti_fw} are the same.

    Above, we bounded $\norm{v_i - x_{i-1}} \cdot \norm{v_i - \tilde{v}_{i+1}} \leq D^2$. If we assume $f$ satisfies the cocoercive inequality $D_f(x, y) \geq \frac{1}{L}\norm{\nabla f(x) - \nabla f(y)}^2$, which holds if $f$ is convex and smooth in $\Rd$ and not just in $\dom(\psi)$, cf. \citep[Theorem 2.1.5]{nesterov2018lectures}, we can obtain an alternative proof where we will end up having a similar bound, but the $D^2$ comes from $\norm{v_i - \tilde{v}_{i+1}}^2$, except for a term in the first iteration. We do it for OFTRL only for simplicity. Indeed, using the right hand side of $\circled{3}$ in \cref{eq:lower_bound_generic_anytime_online_to_batch} as lower bound, we have some extra terms that allow for this task
\begin{equation}\label{eq:ineqs_of_opti_fw_md}
   \begin{split}
       f(x_t) - f(u) \leq G_t &\circled{1}[\leq]\frac{R_t(u)}{A_t} \\
           &\circled{2}[\leq] \frac{\psi(u)-\psi(x_1)}{A_t} + \frac{1}{A_t}\left( \sum_{i=1}^t a_i \innp{\nabla f(x_i) - g_i, v_i -\tilde{v}_{i+1}} - A_{i-1} D_f(x_{i-1}, x_{i})  \right)  \\
           &\circled{3}[\leq] \frac{\psi(u)-\psi(x_1)}{A_t}  + \frac{1}{A_t} \left( a_1\norm{\nabla f(x_1)- \nabla f(x_0)}D + \sum_{i=2}^t \frac{a_i^2 L}{A_{i-1}}\norm{v_i-\tilde{v}_{i+1}}^2  \right)\\
           &\circled{4}[\leq] \frac{\psi(u)-\psi(x_1)}{A_t}  + \frac{1}{A_t} \left( 2LD^2 + \sum_{i=2}^t \frac{4i LD^2}{i-1}  \right)\\
           &\circled{5}[\leq]\frac{\psi(u)-\psi(x_1)}{t(t+1)} + \frac{LD^2(2+ 4(t-1) + 4\log(t))}{t(t+1)}\\
           &\circled{6}[=]\bigol{\frac{\psi(u)-\psi(x_1)}{t(t+1)} + \frac{LD^2}{t}}.
  \end{split}
\end{equation}
\end{proof}

\begin{remark}[Computable primal-dual gap]\label{remark:computable_optimistic_primal_dual_gap}
    Note that for $\psi$ being the indicator of a compact set, and for $u = x^\ast \in \argmin_{x\in \operatorname{dom}(\psi)} f(x)$, we have for the OFTRL variant the upper bound on the primal-dual gap $\frac{1}{A_t}\sum_{i=1}^t a_i \innp{\nabla f(x_i) - g_i, v_i -\tilde{v}_{i+1}} - A_{i-1}D_f(x_{i-1}, x_i)$, which depends on the unknown point $x^\ast$, since $\tilde{v}_{t+1} = x^\ast$. For a computable primal-dual gap, we can apply an analogous bound to $\circled{3}$ in \cref{eq:ineqs_of_opti_fw} but for the last summand only, that is
    \[
        G_t \leq \frac{1}{A_t} \left( \sum_{i=1}^{t-1} a_i \innp{\nabla f(x_i) - g_i, v_i -v_{i+1}} + \norm{\nabla f(x_t) - \nabla f(x_{t-1})}_\ast D - \sum_{i=1}^t A_{i-1}D_f(x_{i-1}, x_i) \right).
    \] 
    Alternatively, using OMD and taking one more linear minimization oracle to compute $\tilde{v}_{t+1}$, we already have that our bound is a computable primal-dual gap: $\frac{1}{A_t}\sum_{i=1}^t a_i \innp{\nabla f(x_i) - g_i, v_i -\tilde{v}_{i+1}} - A_{i-1}D_f(x_{i-1}, x_i)$. We note that it is also possible to obtain an analogous different $\tilde{v}_{t+1}$ for FTRL that does not depend on $x^\ast$, at the expense of computing a linear minimization oracle. Just take the equivalence of OFTRL and OMD for specific choices of the subgradients $\phi_t$ in \cref{remark:equivalence_of_ftrl_and_omd}.
\end{remark}

\section{Proofs for Primal-Dual Short-Steps}

\begin{proof}\linkofproof{prop:primal_dual_steps}
    In \cref{eq:primal_dual_good} or \cref{eq:primal_dual_good_hb}, after their respective inequalities $\circled{1}$, isolating $G_t$, using $\gamma_t \defi \frac{a_t}{A_t}$, which ranges from $[0, 1)$ as $a_t \in [0, \infty)$, we obtain 
\begin{align*}
     \begin{aligned}
         G_t &\leq \frac{A_{t-1}G_{t-1}}{A_t} - \frac{a_t}{A_t} \innp{\nabla f(x_t), v_t-x_t}  + (f(x_{t+1}) - f(x_t))  \\
         &= (1-\gamma_t)G_{t-1} - \gamma_t \innp{\nabla f(x_t), v_t-x_t}  + f((1-\gamma_t)x_t + \gamma_t v_t) - f(x_t).
     \end{aligned}
    \end{align*}
Differentiating the right hand side twice with respect to $\gamma_t$, we obtain
\[
    \innp{\nabla^2 f((1-\gamma_t)x_t + \gamma_t v_t) (v_t-x_t), v_t-x_t} \geq 0,
\] 
hence the expression is convex with respect to $\gamma_t$, which proves the first statement. 

    For the second one, we already performed some steps of the proof in the main paper. Using the primal-dual analysis of \FW{} and \HB{} \cref{sec:review_of_primal_dual_fw} and a few computations we arrive to \cref{eq:isolating_primal_dual_gap}.  Plugging the choice of our primal-dual short step \cref{eq:primal-dual-short} into \cref{eq:isolating_primal_dual_gap} and simplifying leads to 
\begin{align*}
	G_t \leq \left(1 - \frac{\gamma_t}{2}\right) G_{t-1} = \left(1 - \frac{\min\{1,\frac{G_{t-1}}{L \norm{v_{t}-x_t}^2}\}}{2}\right) G_{t-1},
\end{align*}
or equivalently
\begin{align*}
	G_{t-1} - G_t \geq \frac{1}{2} G_{t-1} \min\{1,\frac{G_{t-1}}{L \norm{v_{t}-x_t}^2}\}.
\end{align*}
    Now, one can apply \cite[Lemma 2.21]{CGFWSurvey2022} (or similar results; see \citep{garber2015faster}), which converts the contraction inequality into a convergence guarantee, so we obtain:
$$
G_t \leq \frac{4LD^2}{t+2},
$$
    for the primal-dual gap convergence rate after bounding $\norm{v_i - x_i}^2 \leq D^2$ for all $i \leq t$.

    The progress made by the line search in terms of primal-dual gap is greater than the guaranteed progress \cref{eq:isolating_primal_dual_gap} used by this second approach and so the line search variant also enjoys the same rates of convergence. 
\end{proof}

\begin{remark}[Using $f(x^\ast)$ for the gap]\label{remark:pd_short_steps_are_a_generalization}
    The primal-dual short step is a generalization of the standard short steps, since if we choose the best possible lower bound $L_t \defi f(x^\ast)$, which is a value that we do not know in general, and if we define the gap accordingly $G_t \defi f(x_{t+1}) - f(x^\ast)$, then we obtain
    \[
    A_t G_t - A_{t-1}G_{t-1} = A_t f(x_{t+1}) - A_{t-1} f(x_t) - a_t f(x^\ast),
    \] 
    which after using smoothness and reorganizing yields
\begin{align}
     \begin{aligned}
         G_t  &\leq (1-\gamma_t)G_{t-1} +  \gamma_t(f(x_t)-f(x^\ast))  + \gamma_t \innp{\nabla f(x_t), v_t-x_t} + \gamma_t^2\frac{L}{2} \norm{v_t-x_t}^2\\
         &\circled{1}[=] G_{t-1} + \gamma_t \innp{\nabla f(x_t), v_t-x_t} + \gamma_t^2\frac{L}{2} \norm{v_t-x_t}^2
     \end{aligned}
    \end{align}
    where $\circled{1}$ holds by definition of the gap $G_{t-1} = f(x_t)-f(x^\ast)$.
    Note that optimizing the right hand side of the last expression results into regular short steps, which can be computed even if we do not know the value of $L_t \defi f(x^\ast)$. This computation is simply saying the natural fact that if our primal-dual gap becomes the primal gap, then these new short steps that greedily maximize guaranteed progress on $G_t$, become the regular short steps, that greedily maximize primal progress.
\end{remark}

\subsection{Details on the primal-dual step size for Gradient Descent}\label{sec:primal_dual_steps_gd}

    First recall a few properties of the classical gradient descent (\GD{}) algorithm, with arbitrary step sizes $a_t$:
\begin{align*}
    \begin{aligned}
        x_{t+1} \defi x_{t}-a_t \nabla f(x_t) &=  \argmin_{x\in\R^d} \left\{ a_t\innp{\nabla f(x_t), x-x_t} + \frac{1}{2}\norm{x-x_t}_2^2 \right\} \\ 
        &= \argmin_{x\in\R^d} \left\{ \sum_{i=1}^t a_i\innp{\nabla f(x_i), x-x_i} + \frac{1}{2}\norm{x-x_1}_2^2 \right\}  = x_0 - \sum_{i=1}^t a_i \nabla f(x_i).
    \end{aligned}
\end{align*}

\begin{proof}\linkofproof{prop:gd_primal_dual_steps}

    Recall our notation of positive weights $a_t$ and $A_t \defi \sum_{i=1}^t a_i$. The lower bound that we use to define the primal-dual gap is obtained from
\begin{align*}
     \begin{aligned}
         A_t f(x^\ast) &\circled{1}[\geq] \sum_{i=1}^t a_i f(x_i) + \sum_{i=1}^t a_i \innp{\nabla f(x_i), x^\ast - x_i} \pm \frac{1}{2}\norm{x^\ast- x_1}_2^2  \\
         &\circled{2}[\geq]  \sum_{i=1}^t a_i f(x_i) + \min_{x\in\R^d}\left\{\sum_{i=1}^t a_i \innp{\nabla f(x_i), x - x_i} + \frac{1}{2}\norm{x-x_1}_2^2\right\} - \frac{1}{2}D^2 \\ 
         &\circled{3}[=]  \sum_{i=1}^t a_i f(x_i) + \sum_{i=1}^t a_i \innp{\nabla f(x_i), x_{t+1} - x_i} + \frac{1}{2}\norm{x_{t+1}-x_1}_2^2 - \frac{1}{2}D^2 \defi A_t L_t. 
     \end{aligned}
    \end{align*}
    Where $\circled{1}$ is due to convexity, and in $\circled{2}$ and $\circled{3}$ we take a minimum and use the definition of $x_{t+1}$ as a minimizer of $\Lambda_t(x)\defi \sum_{i=1}^t a_i \innp{\nabla f(x_i), x-x_i}+\frac{1}{2}\norm{x-x_1}_2^2$. We also used the bound $D \geq \norm{x^\ast - x_1}_2$. Now define the gap $G_t \defi f(x_{t+1}) - L_t$. We have
\begin{align}\label{eq:gd_adgt_analysis}
     \begin{aligned}
         A_t& G_t - A_{t-1}G_{t-1} - \eventindicator{t=1}\frac{1}{2}D^2 \circled{1}[=] A_t (f(x_{t+1})- f(x_{t})) + \Ccancel[red]{a_t f(x_t)}  \\
         &\quad - \left( \Ccancel[red]{\sum_{i=1}^{t} a_i f(x_i)} + \sum_{i=1}^{t-1} a_i  \innp{\nabla f(x_i), x_{t+1}-x_i} + \frac{1}{2}\norm{x_{t+1}- x_1}_2^2 \right)  - a_t\innp{\nabla f(x_t), x_{t+1}-x_t}  \\
         &\quad + \left( \Ccancel[red]{\sum_{i=1}^{t-1} a_i f(x_i)} + \sum_{i=1}^{t-1} a_i \innp{\nabla f(x_i), x_{t}-x_i} + \frac{1}{2}\norm{x_{t}- x_1}_2^2  \right)\\
         &\circled{2}[\leq] \innp{\nabla f(x_t), A_t (x_{t+1}-x_t) - a_t(x_{t+1} - x_t)} + \frac{A_tL}{2}\norm{x_{t+1}-x_t}_2^2  - \frac{1}{2}\norm{x_{t+1}-x_t}_2^2 \\
         &\circled{3}[=] \norm{\nabla f(x_t)}_2^2 \left(- a_t A_{t-1} + \frac{a_t^2A_tL}{2} - \frac{a_t^2}{2}\right) \defi E_t(a_t).
     \end{aligned}
    \end{align}
    Above, in $\circled{1}$ we write out the definitions and cancel some terms. In $\circled{2}$, we used smoothness for the first term, and we used the $1$-strong convexity of $\Lambda_{t-1}(x)$ and the fact that $x_t$ is its minimizer so $\Lambda_{t-1}(x_t) - \Lambda_{t-1}(x_{t+1}) \leq -\frac{1}{2}\norm{x_{t+1}-x_t}_2^2$. In $\circled{3}$, we used the definition of $x_{t+1}$ and grouped terms. This time, we have defined the error $E_t(a_t)$ as a function of $a_t$.

    We can apply the same technique as in the primal dual steps for Frank-Wolfe algorithms. Let $\mathcal{G}_{t-1} \defi A_{t-1}G_{t-1} + \eventindicator{t=1} \frac{1}{2}D^2 \geq A_{t-1}(f(x_{t+1}) - f(x^\ast)) \geq 0$. Hence, for $t\geq 1$, we aim to  minimize the right hand side of the following  that comes reorganizing the above
    \begin{equation}\label{eq:short_primal_dual_gd_ineq}
        G_t \leq \frac{\mathcal{G}_{t-1}}{A_t} +\norm{\nabla f(x_t)}_2^2 \left(- \frac{a_t A_{t-1}}{A_t} + \frac{a_t^2L}{2} - \frac{a_t^2}{2A_t}\right).
    \end{equation}
    Differentiating the RHS with respect to $a_t$ and equating to $0$ (recall $A_t = A_{t-1}+a_t$) we obtain
\begin{align}\label{eq:optimizing_gd_primal_dual_short_step}
     \begin{aligned}
         0 &= - \frac{\mathcal{G}_{t-1}}{A_t^2}
         + \norm{\nabla f(x_t)}_2^2\left(-\frac{A_{t-1}A_t - a_t A_{t-1}}{A_t^2} + a_t L - \frac{ 2a_tA_t - a_t^2}{2A_t^2}  \right)\\
         \iff 0 &= - 2\mathcal{G}_{t-1} + \norm{\nabla f(x_t)}_2^2\left( -2A_{t-1}^2 + 2 a_t A_t^2 L -  2a_tA_t + a_t^2  \right),
     \end{aligned}
    \end{align}
    which gives a cubic equation for $a_t$. From now on, assume $a_t$ has the value of this minimizer.
Note that this solution is always greater than $\frac{1}{2L}$, for all $t \geq 1$. Indeed, first notice that the right hand side of \eqref{eq:short_primal_dual_gd_ineq} is convex on $a_t$ for $a_t > 0$, since if we differentiate the right hand side a second time we obtain:
    \[
        2 \frac{\mathcal{G}_{t-1}}{A_t^3}  + \norm{\nabla f(x_t)}_2^2\left(\frac{2A_{t-1}^2}{A_t^3} + L - \frac{A_{t-1}^2}{A_t^3} \right) \geq 0.
    \] 
    Given this convexity, in order to show that the solution of \cref{eq:optimizing_gd_primal_dual_short_step} is $a_t > \frac{1}{2L}$, it is enough to check that the derivative is negative at $a_t = \frac{1}{2L}$ which is immediate after substitution. This fact yields the convergence rate:
\begin{align*}
     \begin{aligned}
         f(x_{t+1}) - f(x_t)  \leq    G_t &\circled{1}[\leq] \frac{\mathcal{G}_{t-1}}{A_t} +\norm{\nabla f(x_t)}_2^2 \left(- \frac{a_t A_{t-1}}{A_t} + \frac{a_t^2L}{2} - \frac{a_t^2}{2A_t}\right) \\
             &\circled{2}[\leq] \frac{\mathcal{G}_{t-1}}{A_{t-1} + 1/(2L)} \circled{3}[\leq] \frac{\mathcal{G}_{0}}{t/(2L)}\\
             &\circled{4}[=] \frac{LD^2}{t},
     \end{aligned}
    \end{align*}
    where $\circled{1}$ is \cref{eq:short_primal_dual_gd_ineq}, $\circled{2}$ holds by substituting the minimizer $a_t$ by $1 /(2L)$ and droping $E_t(1 / (2L))$ which after substitution it is immediate to see that it is nonpositive. Inequality $\circled{3}$ holds by applying recursively $\circled{1}$ and $\circled{2}$, taking into account that $\mathcal{G}_{k} = A_k G_k$ for all $k > 1$. And finally $\circled{4}$ substitutes the value of $\mathcal{G}_{0}$.

\paragraph{Line search for minimizing the primal-dual gap}

    Recall that we define $\mathcal{G}_{t-1} \defi A_{t-1}G_{t-1} + \eventindicator{t=1} \frac{1}{2}D^2 \geq 0$.
If for $t\geq 1$, after $\circled{1}$ in \cref{eq:gd_adgt_analysis} we do not apply smoothness but only apply the inequality $\Lambda_{t-1}(x_t) - \Lambda_{t-1}(x_{t+1}) \leq -\frac{1}{2}\norm{x_{t+1}-x_t}^2$, use $x_{t+1}-x_t = -a_t \nabla f(x_t)$ and then isolate $G_t$, we obtain
\begin{equation}\label{eq:without_using_smoothness}
    G_t \leq \frac{\mathcal{G}_{t-1}}{A_t} + \frac{a_t^2}{2A_t}\norm{\nabla f(x_t)}^2 + (f(x_{t+1})-f(x_t)) 
\end{equation}
If we differentiate twice with respect to $a_t$, we obtain (taking into account $A_t$ and $x_{t+1}$ depend on $a_t$):
\begin{equation*}
  2 \frac{\mathcal{G}_{t-1}}{A_{t}^3} +
  \frac{A_{t-1}^2}{A_t^3}\norm{\nabla f(x_t)}^2 + \innp{\nabla^2f(x_{t+1}) \nabla f(x_t), \nabla f(x_t)} \geq 0.
\end{equation*}
Thus, the right hand side of \cref{eq:without_using_smoothness} is convex with respect to the variable $a_t$, which means that we can do a line search in order to greedily minimize the primal-dual gap progress at each iteration. Above, we used twice differentiability of $f$ for the last summand but in fact this is not required, since $f(x_{t+1}) = f(x_{t}-a_t \nabla f(x_t))$ is clearly convex on $a_t$ due to the convexity of $f$ restricted to the line $a_t \mapsto x_{t}-a_t \nabla f(x_t)$.

Note that the upper bound in \cref{eq:without_using_smoothness} is tighter than the one in \cref{eq:short_primal_dual_gd_ineq}, and thus, the convergence rate from the line search on the primal-dual gap bound is at least the one from the primal-dual short step that minimizes \cref{eq:short_primal_dual_gd_ineq}.
\end{proof}

\section{Extending Primal Analyses of Known \FW{} Convergence Results to Primal-Dual Analyses}\label{sec:pd_extensions_fw}

In this section we demonstrate via some examples how primal-dual analyses can be derived from existing primal analyses. We picked examples that demonstrate relevant aspects of the argument, so that the interested reader should be able to carry them over to other settings. We also refer the interested reader to \citet{CGFWSurvey2022} for full details on the primal convergence analyses.

In line with \cref{sec:preliminaries}, we let $g(x_\ell) = \max_{v\in\X} \innp{\nabla f(x_\ell), x_\ell-v} = \innp{\nabla f(x_\ell), x_\ell-v_\ell}$ be the \FW{} gap and 
$$
A_\ell L_\ell = \sum_{i=0}^\ell a_i f(x_i) + \sum_{i=0}^\ell a_i \innp{\nabla f(x_i), v_i - x_i},
$$
be the associated lower bound function for the standard \FW{} gap. Together with $G_\ell = f(x_{\ell+1}) - L_\ell$ we can now rewrite the crucial term $A_\ell G_\ell - A_{\ell-1} G_{\ell - 1} \leq E_\ell$ appearing in primal-dual analyses as follows:
\begin{equation*}
 \begin{split}
  \MoveEqLeft
  A_\ell (f(x_{\ell + 1}) - L_\ell) - A_{\ell-1}(f(x_{\ell}) - L_{\ell-1})
  \\
  &
  = A_\ell f(x_{\ell + 1}) - A_{\ell-1} f(x_{\ell}) - (A_\ell L_\ell -
  A_{\ell-1} L_{\ell-1})
  \\
  &
  = A_\ell (f(x_{\ell + 1}) - f(x_{\ell})) + a_\ell g(x_\ell) \leq E_\ell.
 \end{split}
\end{equation*}
This can be rearranged to the following fundamental bound inequality:
\begin{equation}
	\label{fw:protoBaseIneq}
	\underbrace{a_\ell g(x_\ell)}_{\text{weighted gap}} - \underbrace{A_\ell  (f(x_\ell) - f(x_{\ell+1}))}_{\text{weighted progress}} \leq E_\ell.
\end{equation}

From this inequality there are various ways to procced. The most natural way is often to plug-in the progress guarantee from the short-step for a direction $d_t$ (typically $d_t = x_t - v_t$ but not always), that is, we use smoothness and optimize the bound:
$$
f(x_{\ell + 1}) - f(x_{\ell}) \leq - \frac{\innp{\nabla f(x_\ell), d_\ell}^2}{2L\norm{d_\ell}^2}.
$$
Chaining this bound into \cref{fw:protoBaseIneq} makes the left-hand side only larger, so that we obtain:
\begin{equation}
	\label{fw:baseIneq}
	\underbrace{a_\ell g(x_\ell)}_{\text{weighted gap}} - \underbrace{A_\ell \frac{\innp{\nabla f(x_\ell), d_\ell}^2}{2L\norm{d_\ell}^2}}_{\text{weighted progress}} \leq E_\ell.
\end{equation}

Inequalities \cref{fw:protoBaseIneq} and \cref{fw:baseIneq} will be key in the following to simply transfer primal convergence results into primal-dual convergence results. To this end the following lemma will be useful, which turns the relation between $a_\ell$ and $A_\ell$ into a convergence rate.

\begin{lemma}[Linear rate conversion]
	\label{lem:linRateCon}
	Suppose that $a_\ell (\kappa - 1) = A_{\ell -1 }$ (or equivalently: $a_\ell \kappa = A_{\ell}$) then it holds
	$$
	1/A_\ell = \left(1 - \frac{1}{\kappa}\right) 1/A_{\ell-1}.
	$$
	\begin{proof}
		The proof is by straightforward rearranging:
		\begin{align*}
			& a_\ell (\kappa - 1) = A_{\ell -1 } \\
			\Rightarrow\ & A_\ell (\kappa - 1) - A_{\ell -1} (\kappa - 1) = A_{\ell -1 } \\
			 \Rightarrow\ & A_\ell (\kappa - 1) = A_{\ell -1} \kappa \\
			 \Rightarrow\ & A_\ell = \frac{\kappa}{\kappa - 1} A_{\ell -1} \\
			 \Rightarrow\ & 1/A_\ell = \left(1 - \frac{1}{\kappa}\right) 1/A_{\ell-1}
		\end{align*}
	\end{proof}
\end{lemma}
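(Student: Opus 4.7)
The plan is to use the defining recurrence $A_\ell = A_{\ell-1} + a_\ell$ (implicit in the paper's notation $A_\ell = \sum_{i=0}^\ell a_i$) together with the hypothesis $a_\ell(\kappa - 1) = A_{\ell-1}$, and reduce the claim to a single algebraic identity.

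First I would verify the equivalence of the two forms of the hypothesis. Adding $a_\ell$ to both sides of $a_\ell(\kappa-1) = A_{\ell-1}$ gives $a_\ell \kappa = A_{\ell-1} + a_\ell = A_\ell$, and conversely subtracting $a_\ell$ from $a_\ell \kappa = A_\ell$ yields the first form. This is essentially bookkeeping but worth stating for clarity.

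Next, from $a_\ell \kappa = A_\ell$ and $a_\ell(\kappa - 1) = A_{\ell-1}$ I would take the ratio:
\[
\frac{A_{\ell-1}}{A_\ell} = \frac{a_\ell (\kappa-1)}{a_\ell \kappa} = \frac{\kappa - 1}{\kappa} = 1 - \frac{1}{\kappa}.
\]
Dividing both sides by $A_{\ell-1}$ (which is positive as it is a sum of positive weights $a_i$) then gives the desired identity
\[
\frac{1}{A_\ell} = \left(1 - \frac{1}{\kappa}\right)\frac{1}{A_{\ell-1}}.
\]

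There is no real obstacle here: the statement is a one-line algebraic manipulation once both forms of the hypothesis are exploited, and the only implicit assumption needed is $A_{\ell-1} > 0$ so that division is legal. The lemma's value lies not in its proof but in the clean $(1 - 1/\kappa)$ contraction factor it produces, which when iterated across $\ell$ yields the linear rate $1/A_\ell \leq (1 - 1/\kappa)^\ell / A_0$ that is presumably applied downstream together with the base inequality \cref{fw:baseIneq}.
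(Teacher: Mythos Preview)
Your proof is correct and essentially the same as the paper's: both are one-line algebraic rearrangements of the hypothesis using $A_\ell = A_{\ell-1} + a_\ell$. The paper substitutes $a_\ell = A_\ell - A_{\ell-1}$ into $a_\ell(\kappa-1) = A_{\ell-1}$ and then isolates $A_\ell$, whereas you take the ratio of the two equivalent forms $A_{\ell-1}/A_\ell = a_\ell(\kappa-1)/(a_\ell\kappa)$; this is a cosmetic difference only.
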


\subsection{Linear convergence rates: the case $E_\ell = 0$}

We will first consider the case where we can set $E_\ell = 0$. This case, which usually comes with a cleaner analysis basically captures all linear convergence rate results for \FW{} algorithms. To this end we present a generic argument that then allows us to simply reuse already known analyses for primal convergence to establish primal-dual convergence simply by plugging in the bounds on the primal progress and the dual gap; it is important to note that these are usually known already from the primal analysis. We start from \eqref{fw:protoBaseIneq}: 
$$
a_\ell g(x_\ell) - A_\ell  (f(x_\ell) - f(x_{\ell+1})) \leq E_\ell, 
$$
and we set $E_\ell = 0$ and solve for equality. For the sake of exposition here and in the following let $p_\ell \doteq f(x_\ell) - f(x_{\ell+1})$ denote the \emph{primal progress} in iteration $\ell$. We obtain:
\begin{align}
	a_\ell g(x_\ell)  - A_\ell p_\ell = 0 \\
	\Rightarrow a_\ell g(x_\ell)  & = A_\ell p_\ell \\
	\Rightarrow a_\ell \left(g(x_\ell) - p_\ell \right) & = A_{\ell-1} p_\ell \\	 
\label{fw:generic_al}	\Rightarrow a_\ell \left(\frac{g(x_\ell)}{p_\ell} - 1\right) & = A_{\ell-1},
\end{align}
and combining this with \cref{lem:linRateCon}, we obtain:

\begin{lemma}[Transfer template for bounds]
	\label{lem:transferBounds}
	Let $g(x)$ be a dual bound and let $p_\ell$ denote the primal progress in iteration $\ell$. If we set $E_\ell = 0$, then we obtain:
	$$
	1/A_\ell = \left(1 - \frac{p_\ell}{g(x_\ell)}\right) 1/A_{\ell-1}.
	$$
\end{lemma}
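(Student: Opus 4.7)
The plan is to chain together the two preparatory results that the excerpt has already laid out, namely the rearrangement \eqref{fw:generic_al} of the fundamental bound inequality \eqref{fw:protoBaseIneq} and the linear rate conversion in \cref{lem:linRateCon}. Concretely, I would start from \eqref{fw:protoBaseIneq} with $E_\ell = 0$, write it as an equality $a_\ell g(x_\ell) = A_\ell p_\ell$ using $p_\ell = f(x_\ell)-f(x_{\ell+1})$, and then split $A_\ell = A_{\ell-1}+a_\ell$ to isolate $A_{\ell-1}$ on one side, exactly as done in the derivation leading to \eqref{fw:generic_al}. This yields $a_\ell\bigl(g(x_\ell)/p_\ell - 1\bigr) = A_{\ell-1}$.

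Next I would identify this relation with the hypothesis of \cref{lem:linRateCon} by setting $\kappa \defi g(x_\ell)/p_\ell$, so that the identity $a_\ell(\kappa-1) = A_{\ell-1}$ holds verbatim. Applying \cref{lem:linRateCon} then delivers
\[
  1/A_\ell \;=\; \left(1 - \frac{1}{\kappa}\right) 1/A_{\ell-1} \;=\; \left(1 - \frac{p_\ell}{g(x_\ell)}\right) 1/A_{\ell-1},
\]
which is the claimed identity.

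There is essentially no obstacle here: the statement is a direct corollary of the two previously established facts, and the only small subtlety is that \eqref{fw:protoBaseIneq} is an inequality, so one must be explicit that setting $E_\ell = 0$ and solving ``for equality'' means working with the boundary case that defines how $a_\ell$ is chosen in terms of $A_{\ell-1}$. It is also worth noting in passing that implicit hypotheses ($p_\ell > 0$, so that $\kappa$ is well-defined and strictly greater than $1$, which is forced by $g(x_\ell) \geq p_\ell$ for a genuine dual bound) are needed to invoke \cref{lem:linRateCon} meaningfully; otherwise the statement is vacuous or trivially satisfied.
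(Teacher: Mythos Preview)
Your proposal is correct and matches the paper's own argument essentially verbatim: derive \eqref{fw:generic_al} from \eqref{fw:protoBaseIneq} with $E_\ell=0$, then apply \cref{lem:linRateCon} with $\kappa = g(x_\ell)/p_\ell$. Your additional remark about the implicit assumption $p_\ell>0$ (so that $\kappa>1$) is a useful clarification that the paper leaves tacit.
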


Also observe that we can use \eqref{fw:generic_al} to dynamically compute the $a_\ell$ from an actual step which allows us to report an adaptive gap if desired.

\subsubsection{$f$ strongly convex and $x^* \in \operatorname{rel.int}(P)$}
\label{sec:interiorCase}

For the sake of completeness, we will present two ways of deriving primal-dual convergence rates for the case where $f$ is strongly convex and $x^* \in \operatorname{rel.int}(P)$. First, we will go the ``complicated'' route, without relying on \cref{lem:transferBounds} and instead directly solving \cref{fw:baseIneq} for $a_\ell$. Then, we will use \cref{lem:transferBounds} to obtain the (same) convergence rate. The primal analysis is due to \citet{guelat1986some}.

We assume that $f$ is $\mu$-strongly convex and $B(x^\ast, r) \subseteq \X$. We start from the primal gap bound via strong convexity:
$$
f(x) - f(x^*) \leq \frac{\innp{\nabla f(x), x - x^*}^2}{2 \mu \norm{x - x^*}^2},
$$
where $\mu > 0$ is the strong convexity constant, which we then combine with the \emph{scaling inequality} for $x^* \in \operatorname{rel.int}(P)$ from \citet{guelat1986some}:
$$
\frac{r}{D} \frac{\innp{\nabla f(x), x - x^*}}{\norm{x - x^*}} \leq \frac{\innp{\nabla f(x), x - v}}{\norm{x - v}},
$$
where $v$ is the FW-vertex for $x$, $D$ is the diameter, and $r$ is radius of the ball around the optimal solution $x^*$ contained in $P$, to obtain the bound on the primal gap:
$$
f(x) - f(x^*) \leq \frac{D^2}{r^2}\frac{\innp{\nabla f(x), x - v}^2}{2 \mu \norm{x - v}^2},
$$
and we set the gap function $g(x_\ell)$ to be the right hand side of the above inequality:
$$
g(x_\ell) \doteq \frac{D^2}{r^2}\frac{\innp{\nabla f(x_\ell), x_\ell - v_\ell}^2}{2 \mu \norm{x_\ell - v_\ell}^2},
$$
where $x_\ell, v_\ell$ are the iterate and \FW{}-vertex in the $\ell$-th iteration, respectively. Now for the direction $d_\ell$ we pick the standard \FW{} direction, i.e., $d_\ell = x_\ell - v_\ell$. With this choice \eqref{fw:baseIneq} becomes

\begin{align*}
	a_\ell g(x_\ell) - A_\ell \frac{\innp{\nabla f(x_\ell), d_\ell}^2}{2L\norm{d_\ell}^2} & \leq E_\ell \\
	\Rightarrow a_\ell \frac{D^2}{r^2}\frac{\innp{\nabla f(x_\ell), x_\ell - v_\ell}^2}{2 \mu \norm{x_\ell - v_\ell}^2} - A_\ell \frac{\innp{\nabla f(x_\ell), x_\ell - v_\ell}^2}{2L\norm{x_\ell - v_\ell}^2} & \leq E_\ell \\
	\Rightarrow \frac{\innp{\nabla f(x_\ell), x_\ell - v_\ell}^2}{\norm{x_\ell - v_\ell}^2} \left( a_\ell \frac{1}{2\mu} \frac{D^2}{r^2}  - A_\ell \frac{1}{2L} \right) & \leq E_\ell.
\end{align*}

We want $a_\ell \frac{1}{2\mu} \frac{D^2}{r^2}  - A_\ell \frac{1}{2L} = 0$, and in particular then we can choose $E_\ell = 0$. We obtain:

\begin{align*}
	a_\ell \frac{1}{2\mu} \frac{D^2}{r^2}  - A_\ell \frac{1}{2L} & = 0 \\
	\Rightarrow a_\ell \frac{1}{2\mu} \frac{D^2}{r^2} & = A_\ell \frac{1}{2L} \\
	\Rightarrow a_\ell \left(\frac{1}{2\mu} \frac{D^2}{r^2} - \frac{1}{2L} \right) & = A_{\ell-1} \frac{1}{2L} \\	 
	\Rightarrow a_\ell \left(\frac{L}{\mu} \frac{D^2}{r^2} - 1 \right) & = A_{\ell-1},
\end{align*}
and applying \cref{lem:linRateCon}, we obtain immediately that
$$
1/A_\ell = \left(1 - \frac{\mu}{L} \frac{r^2}{D^2}\right) 1/A_{\ell-1},
$$
and hence the expected linear rate of convergence follows for the primal-dual gap $G_t$, i.e.,
$$
G_t \leq \frac{A_{t-1}}{A_t} G_{t-1} = \left(1 - \frac{\mu}{L} \frac{r^2}{D^2}\right) G_{t-1}.
$$

Alternatively, we could have directly applied \cref{lem:transferBounds} to
$$
p_\ell = \frac{\innp{\nabla f(x_\ell), x_\ell - v_\ell}^2}{2L\norm{x_\ell - v_\ell}^2} \qquad \text{and} \qquad g(x_\ell) = \frac{D^2}{r^2}\frac{\innp{\nabla f(x_\ell), x_\ell - v_\ell}^2}{2 \mu \norm{x_\ell - v_\ell}^2},
$$
to obtain the same result.

\subsubsection{Away-step variants over polytopes and strongly convex functions}
Let $P$ be a polytope and $f$ be $\mu$-strongly convex. The proof is similar to \cref{sec:interiorCase}. We pick 
$$
g(x_\ell) \doteq \frac{\innp{\nabla f(x_\ell), s_\ell - v_\ell}^2}{2\mu w(P)^2},
$$
which is the \emph{geometric strong convexity} that arises from combining the strong convexity bound in \cref{sec:interiorCase} with a different scaling inequality, namely the one that appears in active set based, away-step inducing variants:
$$
\innp{\nabla f(x_\ell), s_\ell - v_\ell} \geq w(P) \frac{\innp{\nabla f(x_\ell), x_\ell - x^*}}{\norm{x_\ell - x^*}},
$$
where $w(P)$ is the \emph{pyramidal width} of $P$. Moreover, we pick $d_\ell = s_\ell - v_\ell$ and use a slightly different bound on the primal progress, namely
$$
p_\ell = \frac{\innp{\nabla f(x_\ell), s_\ell - v_\ell}^2}{4L\norm{s_\ell - v_\ell}^2},
$$
note the extra factor of $2$ in the denominator, which is due to the selection between the away-step or the \FW{} step; we refer the reader to \citet{lacoste2015global} and \citet{CGFWSurvey2022} for more details on both the geometric strong convexity as well as the modified primal progress bound; see also \citet{pena2018polytope} for a unified perspective on the different notions of geometric conditioning arising from the polytope. We also ignore drop steps here for the sake of simplicity, however they can be added easily and deteriorite the number of required steps only by a constant factor of $2$; see \citet{CGFWSurvey2022} for more details.

We apply \cref{lem:transferBounds} to
$$
p_\ell = \frac{\innp{\nabla f(x_\ell), s_\ell - v_\ell}^2}{4L\norm{s_\ell - v_\ell}^2} \qquad \text{and} \qquad g(x_\ell) = \frac{\innp{\nabla f(x_\ell), s_\ell - v_\ell}^2}{2\mu w(P)^2},
$$
to obtain 
$$
1/A_\ell = \left(1 - \frac{\mu}{L}\frac{w(P)^2}{2D^2}\right) 1/A_{\ell-1},
$$
as expected but again for the primal-dual gap.

\subsubsection{Strongly convex feasible region and $\norm{\nabla f(x)} \geq c > 0$}

We now consider the case where the feasible region is strongly convex and the unconstrained minimizer of $f$ lies outside of the feasible region. Since the feasible region is strongly convex we obtain the scaling inequality \citep{garber2015faster,CGFWSurvey2022}:
$$
\frac{\alpha}{4} \norm{\nabla f(x_t)} \leq \frac{\innp{\nabla f(x_t), x_t - v_t}}{\norm{x_t - v_t}^2}.
$$
Moreover we have 
$$
f(x_t) - f(x^*) \leq \innp{\nabla f(x_t), x_t - v_t}.
$$
Combining the two leads to:
$$
(f(x_t) - f(x^*)) \frac{\alpha}{4} \norm{\nabla f(x_t)} \leq \frac{\innp{\nabla f(x_t), x_t - v_t}^2}{\norm{x_t - v_t}^2},
$$
which gives the dual gap bound:
\begin{equation}
	\label{fw:SCB}
f(x_t) - f(x^*) \leq \frac{4}{\alpha \norm{\nabla f(x_t)}} \frac{\innp{\nabla f(x_t), x_t - v_t}^2}{\norm{x_t - v_t}^2},
\end{equation}
and we set 
$$g(x_t) \doteq \frac{4}{\alpha \norm{\nabla f(x_t)}} \frac{\innp{\nabla f(x_t), x_t - v_t}^2}{\norm{x_t - v_t}^2}.$$
Moreover, for the primal progress bound we simply pick the short-step with the standard direction $x_t - v_t$, i.e.,
$$
p_t \doteq \frac{\innp{\nabla f(x_\ell), x_\ell - v_\ell}^2}{2L\norm{x_\ell - v_\ell}^2}.
$$
Applying \cref{lem:transferBounds} we immediately obtain:
$$
	1/A_\ell = \left(1 - \frac{\alpha \norm{\nabla f(x_t)}}{8L}\right) 1/A_{\ell-1},
$$
and using $\norm{\nabla f(x)}\geq c > 0$ we have 
$$
1/A_\ell \leq \left(1 - \frac{\alpha c}{8L}\right) 1/A_{\ell-1},
$$
which completes the argument.

\subsection{Variants utilizing Bregman Divergences: the case where $E_\ell \neq 0$}

We will now consider the slightly more involved case where $E_\ell \neq 0$. This means that we cannot simply apply \cref{lem:transferBounds} but rather have to account for the error term $E_\ell$. To this end, we first collect alternative choices of $a_t$, $A_t$, and $\gamma_t$, assuming linear coupling $\gamma_t = a_t / A_t$, that will become useful in the rest of the section.

\begin{lemma}[Open-loop step-sizes]
\label{lem:open-loop-steps}
Let $1 \leq \ell \in \mathbb N$ and define $\gamma_t = \frac{\ell}{t + \ell}$. Then 
$$a_t = a_0 \binom{t + \ell - 1}{t} \qquad{and}\qquad A_t = a_0 \binom{t + \ell}{t},$$
is a valid choice and moreover it holds:
\begin{enumerate}
\item $A_t = a_t \frac{t + \ell}{\ell}$;
\item $\frac{a_t^2}{A_t} = \gamma_t a_t = \frac{\ell}{t + \ell} a_t$;
\item With the choice $a_0 = \ell!$, we recover for $\ell = 2$ the known values $a_t = 2 (t+2)$ and $A_t = (t+1) (t+2)$.
\end{enumerate}
\begin{proof}
Verified by straightforward calculation.
\end{proof}
\end{lemma}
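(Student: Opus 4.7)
The lemma is a direct verification; the one non-trivial ingredient is the hockey-stick (Vandermonde) identity $\sum_{i=0}^t \binom{i+\ell-1}{\ell-1} = \binom{t+\ell}{\ell}$, and the remainder is straightforward bookkeeping with binomial coefficients. There is no substantive obstacle.

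The plan is first to check consistency with the cumulative-sum relation $A_t = \sum_{i=0}^t a_i$ inherent in the setup. Using the symmetry $\binom{i+\ell-1}{i} = \binom{i+\ell-1}{\ell-1}$, the hockey-stick identity directly yields $\sum_{i=0}^t a_i = a_0 \binom{t+\ell}{\ell} = a_0 \binom{t+\ell}{t} = A_t$, confirming the proposed formula. Next, for item 1 I would compute the ratio $A_t/a_t = \binom{t+\ell}{t}/\binom{t+\ell-1}{t}$; expanding via factorials, everything cancels except $(t+\ell)/\ell$, establishing $A_t = a_t(t+\ell)/\ell$ and simultaneously confirming $\gamma_t = a_t/A_t = \ell/(t+\ell)$ as prescribed. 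Item 2 is then immediate: $a_t^2/A_t = a_t \cdot (a_t/A_t) = a_t \gamma_t = \frac{\ell}{t+\ell} a_t$, using the value of $\gamma_t$ just derived.

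For item 3, I would substitute $\ell = 2$ and $a_0 = 2! = 2$, obtaining $a_t = 2 \binom{t+1}{t} = 2(t+1)$ and $A_t = 2 \binom{t+2}{t} = (t+1)(t+2)$. The expression for $A_t$ matches the stated value, and $a_t = 2(t+1) = 2t+2$ matches the open-loop choice $a_t = 2t+2$ used in \cref{sec:review_of_primal_dual_fw}, which suggests the printed $a_t = 2(t+2)$ in the lemma statement is a minor typo for $2(t+1)$.
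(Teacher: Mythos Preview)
Your proposal is correct and follows the same approach as the paper, which simply states ``Verified by straightforward calculation''; you supply the details (hockey-stick identity and the ratio computation) that the paper omits. Your observation about item~3 is also correct: with $\ell=2$ and $a_0=2$ one gets $a_t=2(t+1)$, matching the $a_t=2t+2$ used in \cref{sec:review_of_primal_dual_fw}, so the printed $2(t+2)$ is indeed a typo.
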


The next lemma is folklore and we include it for completeness.

\begin{lemma}[Power Sum Estimations] The following holds:
\begin{enumerate}
    \item
$$\sum_{t = 0}^T t^\alpha 
\begin{cases*}
    \approx \ln(T) + \kappa \leq \ln(T) + 1 & for $\alpha = -1$\\
    = T + 1 & for $\alpha = 0$ \\
    \approx \frac{T^{\alpha + 1}}{\alpha + 1} + \frac{T^{\alpha}}{2}  + O(T^{\alpha - 1}) & for $\alpha > 0$ \\
\end{cases*}
,$$
where $\kappa \approx 0.577$ is the Euler-Mascheroni constant. 
\item 
$$
\int_0^T x^\alpha dx = \frac{T^{\alpha+1}}{\alpha+1}
$$
\item 
$$
\sum_{t = 0}^T t^\alpha \leq \int_0^{T+1} x^\alpha dx = \frac{(T+1)^{\alpha+1}}{\alpha+1}
$$

\end{enumerate}
\begin{proof}
The approximation is via Faulhaber's formula.
\end{proof}
\end{lemma}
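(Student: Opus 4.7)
The plan is to dispatch each of the three items separately, since they all reduce to standard calculus and summation identities.

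Item 2 is simply the power rule for antiderivatives evaluated at the endpoints: $\int_0^T x^\alpha\,dx = \frac{x^{\alpha+1}}{\alpha+1}\big|_0^T = \frac{T^{\alpha+1}}{\alpha+1}$ for any $\alpha \neq -1$. Item 3 follows from one monotonicity observation: for $\alpha \geq 0$, the function $x \mapsto x^\alpha$ is non-decreasing on $[0,\infty)$, so for each integer $t \geq 0$ we have $t^\alpha \leq x^\alpha$ on $[t, t+1]$ and hence $t^\alpha \leq \int_t^{t+1} x^\alpha\,dx$. Summing over $t = 0,\dots,T$ collapses the right-hand side into $\int_0^{T+1} x^\alpha\,dx$, whose closed form is given by item 2.

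Item 1 requires three cases. The case $\alpha = 0$ is immediate, since the sum contains $T + 1$ ones. For $\alpha = -1$, dropping the undefined $t = 0$ term, the claim is the classical asymptotic expansion of the harmonic sum, $H_T = \ln T + \kappa + O(1/T)$, which is the definition of the Euler--Mascheroni constant $\kappa$; the one-sided bound $H_T \leq \ln T + 1$ follows directly by integral comparison, since $H_T = 1 + \sum_{t=2}^T \frac{1}{t} \leq 1 + \int_1^T \frac{1}{x}\,dx = 1 + \ln T$. For the main case $\alpha > 0$, I would invoke the Euler--Maclaurin summation formula applied to $f(x) = x^\alpha$: the integral contributes the leading term $\frac{T^{\alpha+1}}{\alpha+1}$, the boundary correction $\frac{f(0) + f(T)}{2}$ gives $\frac{T^\alpha}{2}$ (using $f(0) = 0$), and all further Bernoulli-number corrections together with the integral-form remainder absorb into $O(T^{\alpha - 1})$. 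For integer $\alpha$, one can equivalently cite Faulhaber's formula, which produces an exact polynomial of the same asymptotic shape.

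The only real obstacle is controlling the Euler--Maclaurin remainder rigorously for non-integer $\alpha > 0$. Truncating at the first Bernoulli correction and using standard bounds on the periodic Bernoulli polynomials is enough to keep the tail at $O(T^{\alpha - 1})$. Since the present paper applies the lemma only to sums arising from Frank--Wolfe step-size schedules, in practice only small integer $\alpha$ occur, so a direct appeal to Faulhaber's formula would already suffice for all intended uses.
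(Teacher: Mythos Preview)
Your proposal is correct and aligns with the paper's approach: the paper's entire proof is the single sentence ``The approximation is via Faulhaber's formula,'' and you likewise reduce the main $\alpha>0$ case of item~1 to Faulhaber/Euler--Maclaurin while additionally spelling out the trivial $\alpha=0$ and harmonic $\alpha=-1$ cases and the elementary integral arguments for items~2 and~3, none of which the paper bothers to write out. In short, you have supplied strictly more detail than the paper, but along the same line.
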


As an example, in this section we specifically consider convergence rates arising from the so-called strong growth property as studied in \citet{pena2023affine} and \citet{WPP2023} for \FW{} algorithms. Our starting point is the following simple Bregman expansion of $f$: 

$$D_f(y,x) = f(y) - f(x) - \innp{\nabla f(x), y - x}.$$

We use the standard \FW{} gap (or alternatively the Heavy-Ball variant) as a lower bound as before, so that the critical estimation becomes, with the usual $x_{t+1} = (1-\gamma_t) x_t + \gamma_t v_t$: 
\begin{align*}
    &\ A_t(f(x_{t+1}) - f(x_t)) - a_t \innp{\nabla f(x_t), v_t - x_t} \\
= &\ A_t (\innp{\nabla f(x_t), x_{t+1} - x_t} + D_f(x_{t+1}, x_t)) - a_t \innp{\nabla f(x_t), v_t - x_t}.
\end{align*}

Instead of using the standard linear coupling $\gamma_t = a_t / A_t$, we use the modified linear coupling $\gamma_t = 2 a_t / A_t$, so that we obtain:
\begin{align}
&\ A_t (\innp{\nabla f(x_t), x_{t+1} - x_t} + D_f(x_{t+1}, x_t)) - a_t \innp{\nabla f(x_t), v_t - x_t} \\
= &\ A_t D_f(x_{t+1}, x_t) + 2 a_t \innp{\nabla f(x_t), v_t - x_t} -  a_t \innp{\nabla f(x_t), v_t - x_t} \\
\label{eq:bderror}
= &\ A_t D_f(x_{t+1}, x_t) - a_t \innp{\nabla f(x_t), x_t - v_t}.
\end{align}

The \emph{$(M,r)$-strong growth property} asserts
$$
D_f(x_{t+1}, x_t) \leq \gamma_t^2 M/2 \innp{\nabla f(x_t), x_t - v_t}^r,
$$
with $M > 0$ and $r \in [0,1]$. 

\subsubsection{Primal-Dual Convergence $r = 1$} 

We start with the most obvious case $r = 1$, then \eqref{eq:bderror} becomes:

\begin{align*}
    A_t D_f(x_{t+1}, x_t) - a_t \innp{\nabla f(x_t), x_t - v_t} \leq &\ 2M \frac{a_t^2}{A_t} \innp{\nabla f(x_t), x_t - v_t} - a_t \innp{\nabla f(x_t), x_t - v_t} \\
= &\ \innp{\nabla f(x_t), x_t - v_t} a_t (2M \frac{a_t}{A_t} - 1),
\end{align*}
and hence choosing $a_t / A_t = \frac{1}{2M}$, which implies $\gamma_t = \frac{1}{M}$, the right-hand side becomes $0$ and we obtain linear convergence with a rate $1/A_{t-1} (1-\frac{1}{2M}) = 1 / A_t$. 

\subsubsection{Primal-Dual Convergence $r \in [0,1)$}

The situation is much more complicated for $r \in (0,1)$. In this case \eqref{eq:bderror} becomes:

\begin{align*}
    A_t D_f(x_{t+1}, x_t) - a_t \innp{\nabla f(x_t), x_t - v_t} \leq &\ 2M \frac{a_t^2}{A_t} \innp{\nabla f(x_t), x_t - v_t}^r - a_t \innp{\nabla f(x_t), x_t - v_t}
\end{align*}

We now apply Young's inequality with $p = \frac{1}{r}$, $q = \frac{1}{1-r}$, rebalancing with $\frac{a_t^r}{r^r}$, so that we obtain: 
 
\begin{align*}
	&\ 2M \frac{a_t^2}{A_t} \innp{\nabla f(x_t), x_t - v_t}^r - a_t \innp{\nabla f(x_t), x_t - v_t} \\
	= &\ 2M \frac{a_t^2}{A_t} \frac{r^r}{a_t^r} \frac{a_t^r}{r^r} \innp{\nabla f(x_t), x_t - v_t}^r - a_t \innp{\nabla f(x_t), x_t - v_t} \\
	\leq &\ \frac{\left(2M \frac{a_t^2}{A_t} \frac{r^r}{a_t^r}\right)^q}{q} + \frac{\left(\frac{a_t^r}{r^r} \innp{\nabla f(x_t), x_t - v_t}^r\right)^{1/r}}{1/r} - a_t \innp{\nabla f(x_t), x_t - v_t} \\
	= &\ (1-r) \left(2M \frac{a_t^2}{A_t} \frac{r^r}{a_t^r}\right)^\frac{1}{1-r} 
	= (1-r) \left(2M r^r\right)^\frac{1}{1-r} \left(\frac{a_t^{2-r}}{A_t} \right)^\frac{1}{1-r}
\end{align*}

Summing the errors as customary, we obtain:

\begin{align*}
	\frac{1}{A_T} \sum_{\ell = 0}^T E_\ell \leq &\ (1-r) \left(2M r^r\right)^\frac{1}{1-r} \frac{1}{A_T} \sum_{t = 0}^T \left(\frac{a_t^{2-r}}{A_t} \right)^\frac{1}{1-r},
\end{align*}
which now needs to be estimated to derive the convergence rate.

\paragraph{Estimation in the order}
We will first do an approximate estimation, to recover the order of convergence and provide intuition. Suppose that $a_t = t^c$ and hence $A_t \approx \frac{t^{c+1}}{c+1} = \Theta(t^{c+1})$. With this we obtain: 

\begin{align*}
	&\ (1-r) \left(2M r^r\right)^\frac{1}{1-r} \frac{c+1}{T^{c+1}} \sum_{t = 0}^T \left(\frac{t^{c(2-r)}}{t^{c+1}} \right)^\frac{1}{1-r} \\
	= &\ (1-r) \left(2M r^r\right)^\frac{1}{1-r} \frac{c+1}{T^{c+1}} \sum_{t = 0}^T \left(t^{c(2-r) - c -1} \right)^\frac{1}{1-r} \\
	\approx &\ (1-r) \left(2M r^r\right)^\frac{1}{1-r} \frac{c+1}{T^{c+1}} \frac{T^{c - \frac{1}{1-r} + 1}}{c - \frac{1}{1-r} + 1} = (1-r) \left(2M r^r\right)^\frac{1}{1-r} \frac{c+1}{c - \frac{1}{1-r} + 1} T^{- \frac{1}{1-r}},
\end{align*}
assuming $c > \frac{1}{1-r}$. Therefore we obtain for the total convergence rate:
$$G_t \leq \frac{A_0 G_0}{A_T} + (1-r) \left(2M r^r\right)^\frac{1}{1-r} \frac{c+1}{c - \frac{1}{1-r} + 1} T^{- \frac{1}{1-r}},$$
together with $1/A_T \approx \frac{c+1}{T^{c+1}} \leq \frac{c+1}{T^{\frac{1}{1-r}+1}}$, implies via a rather weak estimation
$$G_t \leq 2 (1-r) \left(2M r^r\right)^\frac{1}{1-r} \frac{c+1}{c - \frac{1}{1-r} + 1} T^{- \frac{1}{1-r}},$$

\paragraph{Specific choice of $a_t$} We will now refine the analysis from above via a specific choice of $a_t$ and $A_t$. We choose $a_t = a_0 \binom{t + \ell - 1}{t}$ and $A_t = a_0 \binom{t + \ell}{t}$ similar to \cref{lem:open-loop-steps}. Note however the changed linear coupling $2 a_t / A_t = \gamma_t$ that we use here:

\begin{align*}
	&\ (1-r) \left(2M r^r\right)^\frac{1}{1-r} \frac{1}{A_T} \sum_{t = 0}^T \left(\frac{a_t^{2-r}}{A_t} \right)^\frac{1}{1-r} \\
	= &\ (1-r) \left(2M r^r\right)^\frac{1}{1-r} \frac{1}{A_T} \sum_{t = 0}^T \left(\frac{a_t}{A_t} \right)^\frac{1}{1-r} a_t \\
	= &\ (1-r) \left(2M r^r\right)^\frac{1}{1-r} a_0^{-1} \binom{T + \ell}{T}^{-1} \sum_{t = 0}^T \left(\frac{\ell}{t + \ell}\right)^\frac{1}{1-r} a_0 \binom{t + \ell - 1}{t} \\
	= &\ (1-r) \left(2M r^r\right)^\frac{1}{1-r} \binom{T + \ell}{T}^{-1} \sum_{t = 0}^T \left(\frac{\ell}{t + \ell}\right)^\frac{1}{1-r} \binom{t + \ell - 1}{t} \\
    \leq &\ (1-r) \left(2M r^r\right)^\frac{1}{1-r} \binom{T + \ell}{T}^{-1} \frac{\ell^\frac{1}{1-r}}{(\ell - 1)!} \sum_{t = 0}^T 
    t^{\ell - 1 - \frac{1}{1-r}} \\
    = &\ (1-r) \left(2M r^r\right)^\frac{1}{1-r} \binom{T + \ell}{T}^{-1} \frac{\ell^\frac{1}{1-r}}{(\ell - 1)!} \left(\frac{T^{\ell - \frac{1}{1-r}}}{\ell - \frac{1}{1-r}} + O\left(T^{\ell - 1 - \frac{1}{1-r}}\right)\right) \\
    \leq &\ (1-r) \left(2M r^r\right)^\frac{1}{1-r} 
    \frac{\ell^\ell}{T^\ell}
    \frac{\ell^\frac{1}{1-r}}{(\ell - 1)!} \left(\frac{T^{\ell - \frac{1}{1-r}}}{\ell - \frac{1}{1-r}} + O\left(T^{\ell - 1 - \frac{1}{1-r}}\right)\right) \\
    = &\ (1-r) \left(2M r^r\right)^\frac{1}{1-r} 
    \frac{\ell^{\ell + \frac{1}{1-r}}}{(\ell - 1)!} \left(\frac{T^{- \frac{1}{1-r}}}{\ell - \frac{1}{1-r}} + O\left(T^{- 1 - \frac{1}{1-r}}\right)\right) \\
    = &\ (1-r) \left(2M r^r\right)^\frac{1}{1-r} 
    \frac{\ell^{\ell + \frac{1}{1-r}}}{(\ell - \frac{1}{1-r})(\ell - 1)!} T^{- \frac{1}{1-r}} + O\left(T^{- 1 - \frac{1}{1-r}}\right),
    \end{align*}
with $\ell > 1 + \frac{1}{1-r}$. Therefore we obtain for the total convergence rate:
$$G_t \leq \frac{A_0 G_0}{A_T} + (1-r) \left(2M r^r\right)^\frac{1}{1-r}  \frac{\ell^{\ell + \frac{1}{1-r}}}{(\ell - \frac{1}{1-r})(\ell - 1)!} T^{- \frac{1}{1-r}} + O\left(T^{- 1 - \frac{1}{1-r}}\right),$$
together with $1/A_T \leq \ell^\ell T^{-\ell} \leq \ell^\ell T^{- 1 - \frac{1}{1-r}}$, this implies 
$$(1-r) \left(2M r^r\right)^\frac{1}{1-r}  \frac{\ell^{\ell + \frac{1}{1-r}}}{(\ell - \frac{1}{1-r})(\ell - 1)!} T^{- \frac{1}{1-r}} + O\left(T^{- 1 - \frac{1}{1-r}}\right).$$

\begin{remark}[Step-size choice and short steps]
Note,that we can always use short-steps instead of the $\frac{\ell}{t+\ell}$-step size by monotonicity and obtain the same rates.
\end{remark}

\section{Experiments: Appendix}
\label{sec:experiments_appendix}

We performed additional experiments to test convergence and isolate the effect of the different components. Note that for the additional experiments below, we sometimes have used additional \LMO{} calls for computing evaluation metrics for comparisons, so that the timings in the right columns are not as reliable and not comparable to the main experiments. Either way, the performance in iterations gives a clear picture here as the cost per iteration of all algorithms is quite comparable, apart from the adaptive variant which behaves more like line search but serves as a baseline only.

\subsection*{Primal-dual short-steps}

We conducted a series of experiments to evaluate the performance of the primal-dual short step method described in \cref{sec:primal-dual-short-step}. Unfortunately, our findings indicate that this variant does not outperform the vanilla \FW{} variant with short-steps (or line search). The primary reason for this effect appears to be the improved primal-dual gap measure, which results in a \emph{smaller} step size, simply by how the step size is computed. Consequently, this leads to slower convergence rates. This behavior is reminiscent of the lower bound discussed in \citep{guelat1986some}, and thus, it is not entirely unexpected. Moreover, by focusing on large primal-dual progress, the algorithm can overshoot in terms of primal progress, so that while the primal-dual progress is relatively large, we are still converging slowly (assuming that the primal-dual gap converges at a certain rate). 

To illustrate this point, we present a typical experiment in \cref{fig:pdss-sq-prob-simplex}. This figure is representative of the results we observed across various tests, consistently showing that the primal-dual short step method converges at a similar rate to the vanilla \FW{} variant. For completeness, we also include the results for the optimistic variant, which indeed outperforms the other variants.

\subsection*{Heavy ball lower bound vs optimism}

In our experiments, we also investigated whether the heavy ball lower bound is inherently stronger than the vanilla \FW{} lower bound, which might have implied that optimism does not actually help at all. However, our findings indicate that this is not the case; see \cref{fig:sfw-nsq-prob-simplex} for a representative experiment. Specifically, when the heavy ball lower bound is applied to the vanilla \FW{} variant, it actually performs worse than the standard \FW{} gap in our tests. This observation strongly suggests that the superior performance of the optimistic variant is due to its trajectory rather than the strength of the lower bound itself. 

\subsection*{Additional experiments}
We present two additional experiments in \cref{fig:nsq_optimistic_fw_oftrl_prob_simplex,fig:sq_optimistic_fw_oftrl_k-sparse} that had to be relegated to the appendix due to space constraints. Also here the results are consistent with our findings in the main text.

\begin{figure*}[ht]
    \begin{center}
    \includegraphics[width=0.8\textwidth]{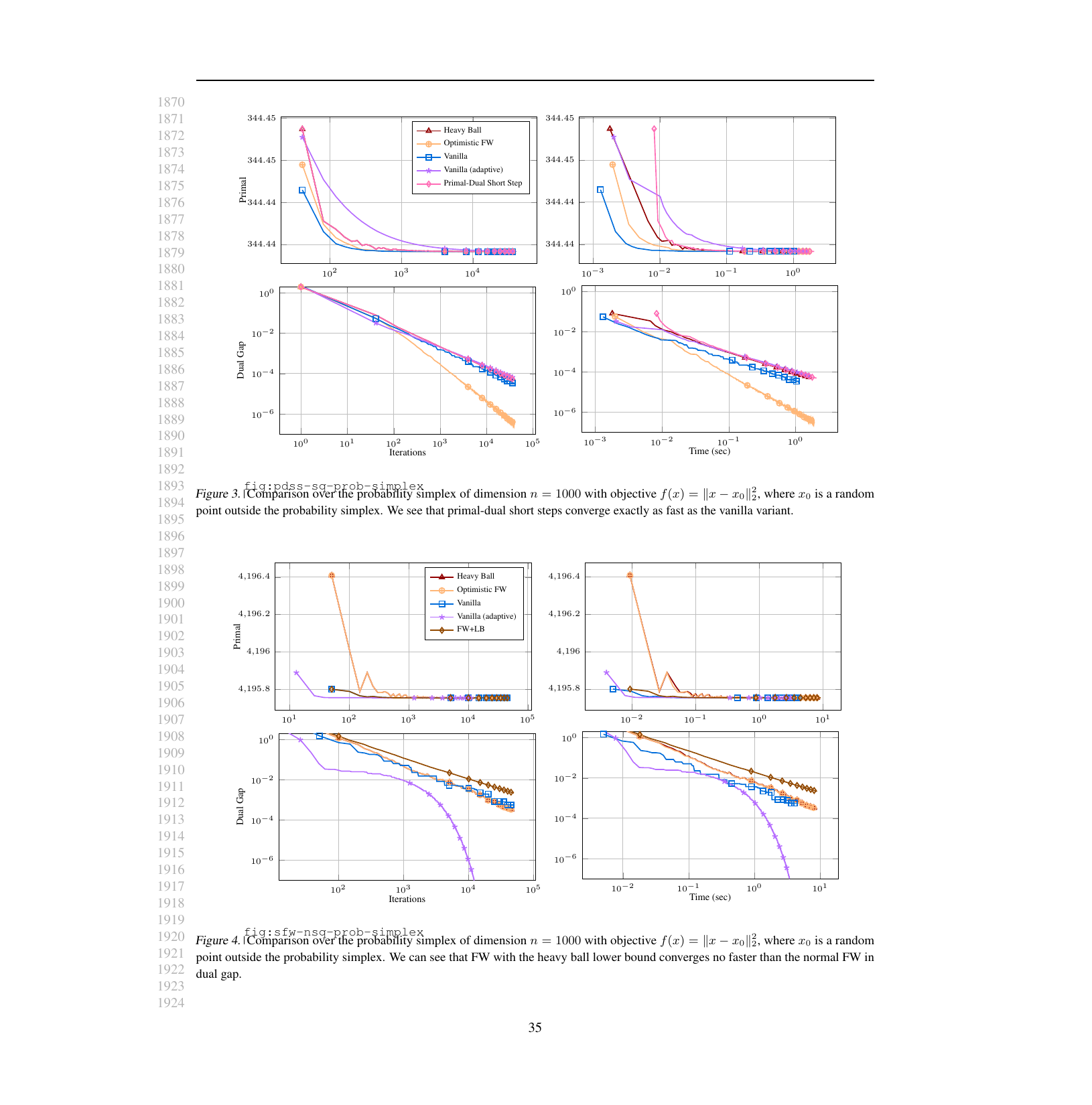}
    \end{center}
    \caption{\label{fig:pdss-sq-prob-simplex} Comparison over the probability simplex of dimension $n=1000$ with objective $f(x) = \norm{x - x_0}_2^2$, where $x_0$ is a random point outside the probability simplex. We see that primal-dual short steps converge exactly as fast as the vanilla variant.}
\end{figure*}

\begin{figure*}[ht]
    \begin{center}
    \includegraphics[width=0.8\textwidth]{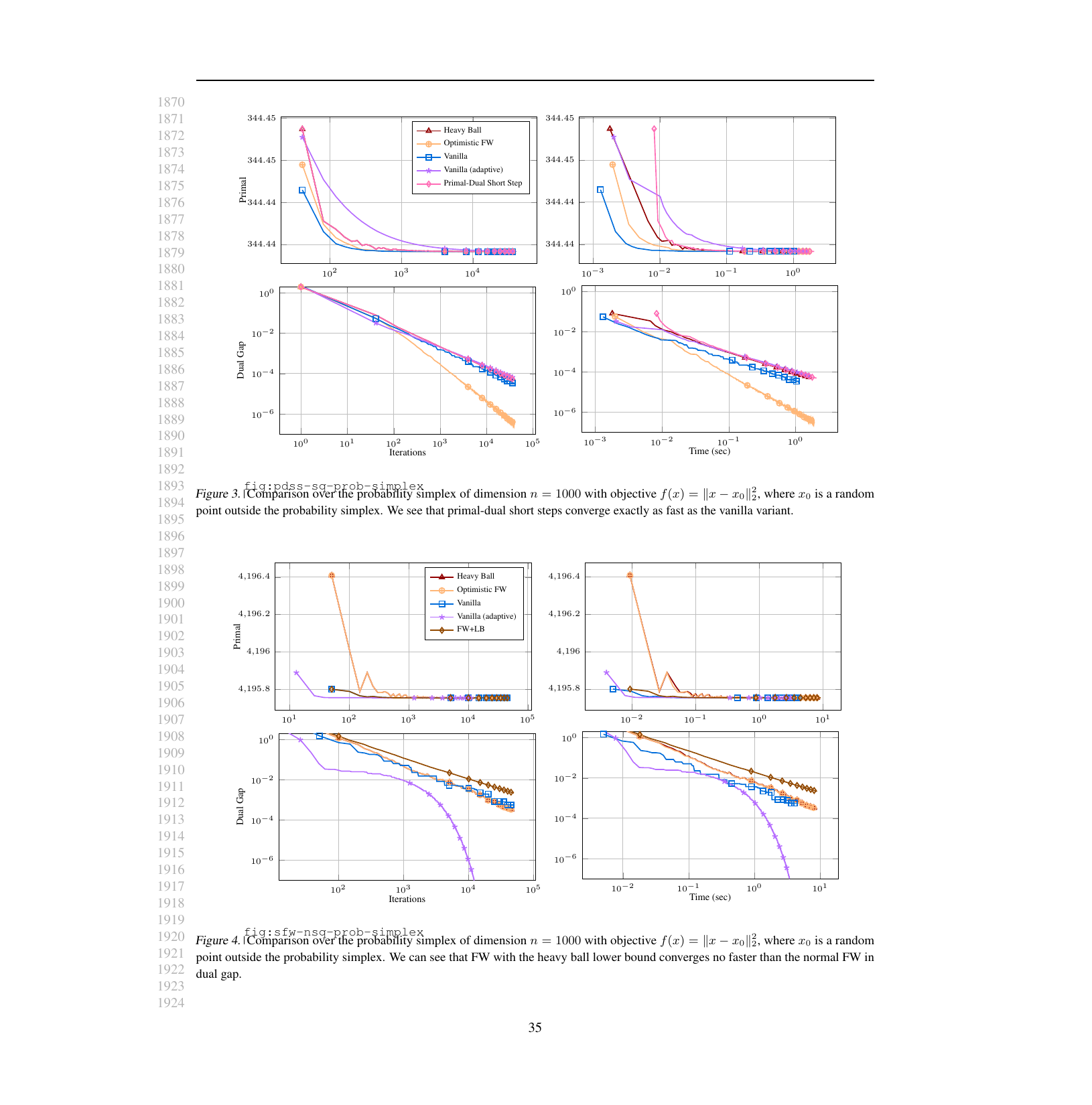}
    \end{center}
    \caption{\label{fig:sfw-nsq-prob-simplex} Comparison over the probability simplex of dimension $n=1000$ with objective $f(x) = \norm{x - x_0}_2^2$, where $x_0$ is a random point outside the probability simplex. FW+LB denotes the standard \FW{} variant with the heavy ball lower bound. We can see that FW with the heavy ball lower bound converges no faster than the normal FW in dual gap.}
\end{figure*}

\begin{figure*}[ht]
    \begin{center}
    \includegraphics[width=0.8\textwidth]{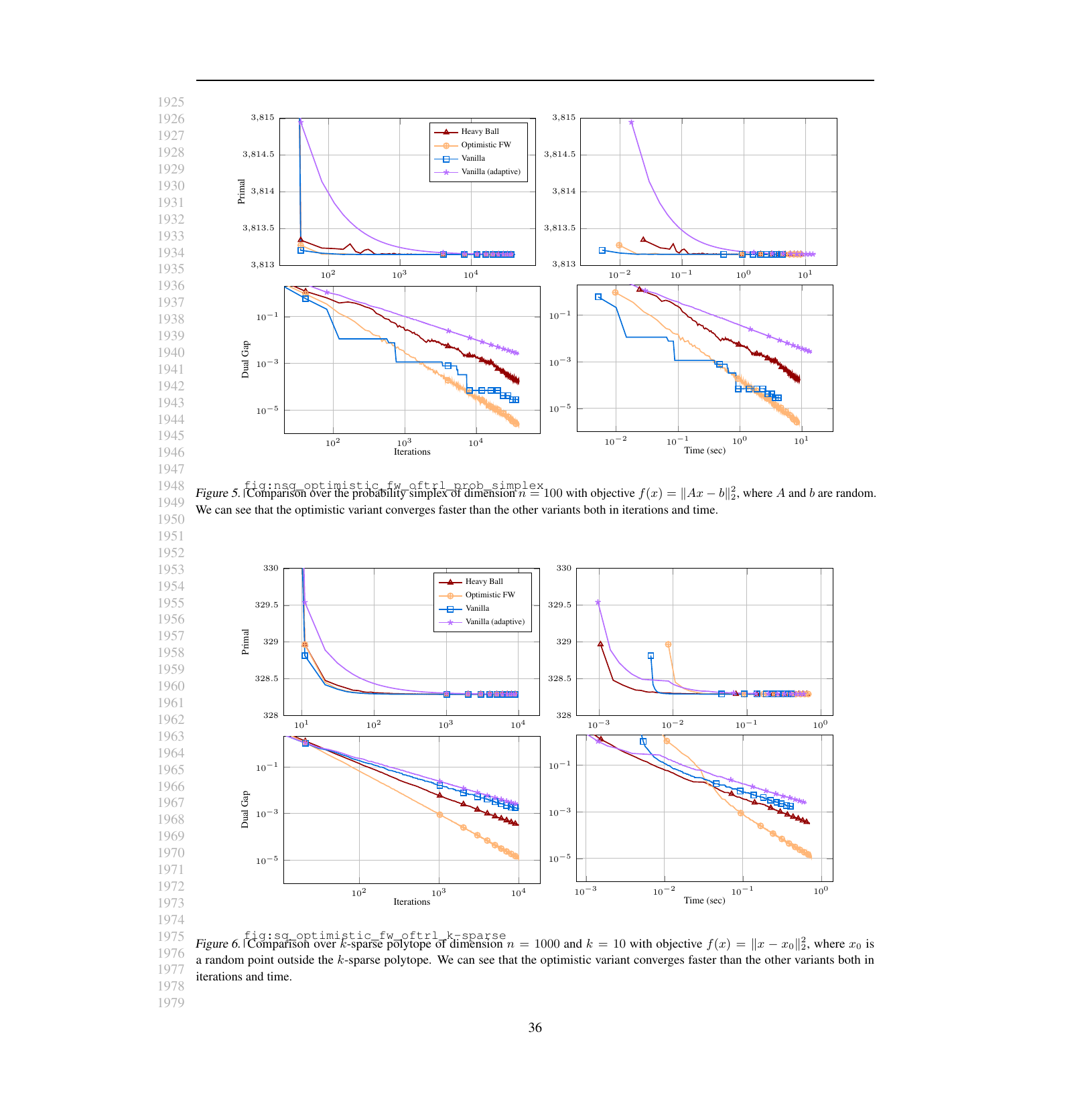}
    \end{center}
    \caption{\label{fig:nsq_optimistic_fw_oftrl_prob_simplex} Comparison over the probability simplex of dimension $n = 100$ with objective $f(x) = \norm{Ax - b}_2^2$, where $A$ and $b$ are random. We can see that the optimistic variant converges faster than the other variants both in iterations and time.}
\end{figure*}

\begin{figure*}[ht]
    \begin{center}
    \includegraphics[width=0.8\textwidth]{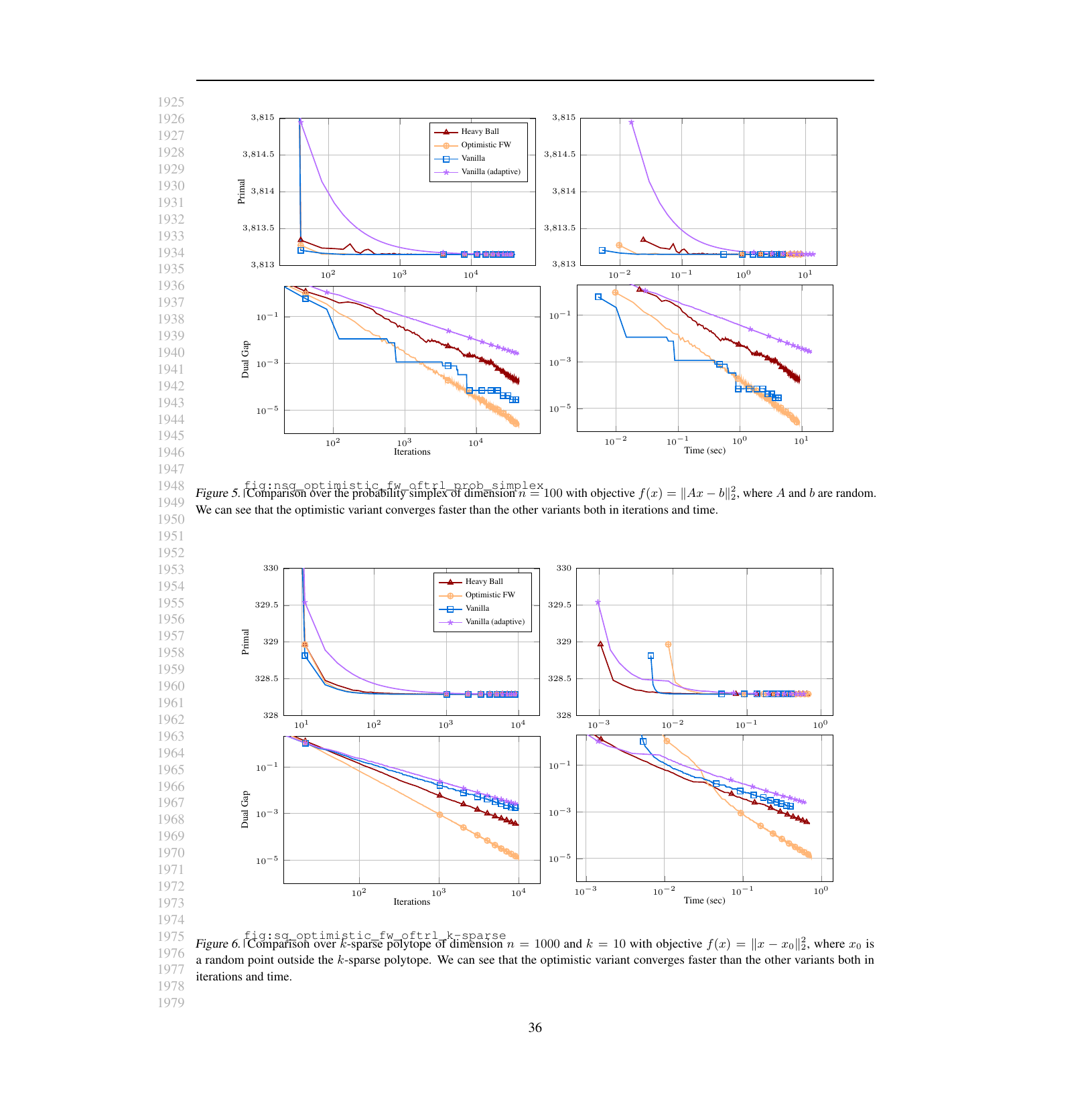}
    \end{center}
    \caption{\label{fig:sq_optimistic_fw_oftrl_k-sparse} Comparison over $k$-sparse polytope of dimension $n=1000$ and $k = 10$ with objective $f(x) = \norm{x - x_0}_2^2$, where $x_0$ is a random point outside the $k$-sparse polytope. We can see that the optimistic variant converges faster than the other variants both in iterations and time.}
\end{figure*}

\end{document}